\shorttitle{Dissipative stochastic evolution equations driven by
  general noise}
\def\bA{\mathbb A}
\def\bC{\mathbb C}
\def\bE{\mathbb E}
\def\bF{\mathbb F}
\def\bN{\mathbb N}
\def\bR{\mathbb R}
\def\bV{\mathbb V}
\def\bX{\mathbb X}
\def\bY{\mathbb Y}
\def\cA{\mathcal A}
\def\cF{\mathcal F}
\def\cX{\mathcal X}
\numberwithin{equation}{section}
\begin{document}

\title{Dissipative stochastic evolution equations driven by general
  Gaussian and non-Gaussian noise}

  \vskip0.5cm

  \authorone[Universit\`a di Trento, Dipartimento di Matematica, Via
  Sommarive 14, 38050 Povo (Trento), Italia]{S. Bonaccorsi}

  \emailone{stefano.bonaccorsi@unitn.it}

  \authortwo[Laboratoire Paul Painlev\'e, Universit\'e de Lille 1,
 F-59655 Villeneuve d'Ascq, France.]{C. A. Tudor}

  \emailtwo{tudor@math.univ-lille1.fr; Associate member: Samos, Centre d'Economie de La Sorbonne, Universit\'e de Paris 1 Panth\'eon-Sorbonne, rue de Tolbiac, 75013 Paris, France. }

\begin{abstract}
  We study a class of stochastic evolution equations with a dissipative
  forcing nonlinearity and additive noise. The noise is assumed to
  satisfy rather general assumptions about the form of the covariance
  function; our framework covers examples of Gaussian processes, like
  fractional and bifractional Brownian motion and also non Gaussian
  examples like the Hermite process. We give an application of our
  results to the study of the stochastic version of a common model of
  potential spread in a dendritic tree. Our investigation is specially
  motivated by possibility to introduce long-range dependence in time of
  the stochastic perturbation.
\end{abstract}

\keywords{stochastic evolution equation, dissipative nonlinearity,
  general Gaussian and non-Gaussian noise, neuronal
  networks} 

\ams{60H15}{60G18, 92C20} 

\section{Introduction}

In recent years, existence, uniqueness and further properties of
solutions to stochastic equations in Hilbert spaces under
dissipativity assumptions has been widely discussed in the literature
since similar equations play an important r\^ole in stochastic models
of population biology, physics and mathematical finance (among
others), compare the monograph \cite{dpz:Ergodicity} for a thorough
discussion.

In this paper, by using semigroup methods, we shall discuss existence
and uniqueness of mild solutions to a class of stochastic evolution
equations driven by a stochastic process $X$ which is not necessarily
Gaussian.
In the Hilbert space $\bX$ we consider the following equation
\begin{equation}
  \label{eq:1}
  \begin{aligned}
  {\rm d}u(t) &= \bA u(t) + F(u(t)) \,{\rm d}t + {\rm d}X(t)
  \\
  u(0) &= u_0,
  \end{aligned}
\end{equation}
where $\bA$ and $F$ satisfy some dissipativity condition on $\bX$ and
$X$ is a general $\bX$-valued process that satisfies some specific
condition on the covariance operator.

Problems of the form of Equation \eqref{eq:1} arise in the modeling of
certain problems in neurobiology. In particular, in Section
\ref{sec:biology} we shall analyze a model of diffusion for electric
activity in a neuronal network, recently introduced in \cite{CM07},
driven by a stochastic term that is not white in time and
space. Notice further that, motivated by this model, we are concerned
with assumptions on the drift term which are not covered by those in
\cite{dpz:Ergodicity}.

\begin{assumpt}
  \label{Assumption A}
  The operator $\bA : D(\bA) \subset \bX \to \bX$ is associated with a form
  $({\mathfrak a}, \bV)$ that is densely defined, coercive and
  continuous; by standard theory of Dirichlet forms, compare
  \cite{Ou05}, the operator $\bA$ generates a strongly continuous,
  analytic semigroup $({S}(t))_{t\geq 0}$ on the Hilbert space $\bX$
  that is uniformly exponentially stable: there exist $M\ge 1$ and
  $\omega > 0$ such that $\|S(t)\|_{L(\bX)} \le M e^{-\omega t}$ for
  all $t \ge 0$.
\end{assumpt}

In the application of Section \ref{sec:biology}, the operator $\bA$ is
not self-adjoint, as the corresponding form ${\mathfrak a}$ is not
symmetric; also, since $\bV$ is not compactly embedded in $\bX$, it is
easily seen that the semigroup generated by $\bA$ is not compact hence
it is not Hilbert-Schmidt.

\begin{assumpt}
  \label{Assumption F}
  $F$ is an $m$-dissipative mapping with $\bV \subset D(F)$ and $F:
  \bV \to \bX$ is continuous with polynomial growth.
\end{assumpt}

Let us introduce the class of noises that we are concerned with.  We
define the mean of a $\bX$ valued process $(X_{t})_{t\in [0,T]}$ by
$m_{X}: [0,T] \to \bX$, $m_{X}(t)=\mathbb{E}X_{t}$ and the covariance $C_{X}:
[0,T] ^{2} \to L_{1}(\bX)$ by
\begin{align*}
  \langle C_{X}(t,s)u, v\rangle _{\bX}= \mathbb{E}\left[ \langle X_{t}-m_{X} (t)
    v\rangle _{\bX} \langle X_{s} -m_{X}(s), u\rangle _{\bX} \right]
\end{align*}
for every $s,t \in [0,T]$ and for every $u,v \in \bX$.

Let $Q$ be a nuclear self-adjoint operator on $\bX$ ($Q\in L_{1}(\bX)$
and $Q = Q^{\star} > 0$). It is well-known that $Q$ admits a sequence
$(\lambda _{j})_{j\geq 1}$ of eigenvalues such that $0 < \lambda_{j}
\downarrow 0$ and $\sum_{j}\lambda _{j \geq 1} <\infty$. Moreover, the
eigenvectors $(e_{j})_{j\geq 1}$ of $Q$ form an orthonormal basis of $\bX$.

Let $(x(t))_{t\in [0,T]} $ be a centered square integrable
one-dimensional process with a given covariance $R$. We define its
infinite dimensional counterpart by
\begin{equation*}
  X_{t} =\sum_{j=1} \sqrt{\lambda _{j}} x_{j}(t) e_{j} \qquad t\in [0,T],
\end{equation*}
where $x_{j}$ are independent copies of $x$. It is trivial to see that
the above series is convergent in $L^{2}(\Omega ; \bX)$ for every fixed $t\in [0,T]$ and
\begin{equation*}
\mathbb{E} \Vert X_{t}\Vert ^{2} _{\bX} =(Tr Q) R(t,t).
\end{equation*}

\begin{rem}
  The process $X$ is a $\bX$-valued centered process with covariance
  $R(t,s)Q$.
\end{rem}

\begin{assumpt}
  \label{ass:h1}
  We will assume that the covariance of the process $X$ satisfies the
  following condition:
  \begin{equation}\label{h1}
    (s,t) \to \frac{\partial^{2}R}{\partial s \partial t} \in
    L^{1}([0,T] ^{2}).
  \end{equation}
\end{assumpt}

We will treat several examples of stochastic processes that satisfy
(\ref{h1}). The first two examples are Gaussian processes (fractional
and bifractional Brownian motion) while the third example is
non-Gaussian (the Hermite process).

\begin{ex} {\it The process $X$ is a fractional Brownian motion (fBm)
    with Hurst parameter $H > \frac{1}{2}$. } We recall that its
  covariance equals, for every $s, t \in [0,T]$
  \begin{align*}
    R(s,t) = \frac{1}{2}\left ( s^{2H}+t^{2H}-\left|
        s-t\right|^{2H}\right).
  \end{align*}
  In this case $\frac{\partial^2R}{\partial s \partial
    t}=2H(2H-1)\left \vert t-s\right \vert^{2H-2}$ in the sense of
  distributions. Since $R$ vanishes on the axes, we have for every $s,t\in [0,T]$
  \begin{align*}
    R(s,t) = \int_0^{t} ds_{1} \int_0^{s} ds_2
    \frac{\partial^2R}{\partial s_1 \partial s_2}.
  \end{align*}
\end{ex}

\begin{ex} {\it $X$ is a bifractional Brownian motion with $H \in
    (0,1), K \in (0,1]$ and $2HK> 1.$ } Recall the the bifractional
  Brownian motion $(B^{H,K}_{t})_{t \in [0,T]}$ is a centered Gaussian
  process, starting from zero, with covariance
  \begin{equation}
    \label{cov-bi}
    R^{H,K}(t,s) := R(t,s)= \frac{1}{2^{K}}\left( \left(
        t^{2H}+s^{2H}\right)^{K} -\vert t-s \vert ^{2HK}\right)
  \end{equation}
  with $H\in (0,1)$ and $K\in (0,1]$.  Note that, if $K=1$ then
  $B^{H,1}$ is a fractional Brownian motion with Hurst parameter $H\in
  (0,1)$.


\end{ex}

\begin{ex} {\it A non Gaussian example: the Hermite process: }The
  driving process is now a Hermite process with selsimilarity order
  $H\in (\frac{1}{2}, 1)$. This process appears as a limit in the
  so-called {\it Non Central Limit Theorem} (see \cite{DM} or
  \cite{Ta1}). 


  We will denote by $(Z_{t}^{(q,H)})_{t\in \lbrack 0,1]}$ the Hermite
  process \textit{with self-similarity parameter} $H\in \left(
    1/2,1\right) $. Here $q\geq 1$ is an integer. The Hermite process
  can be defined in two ways: as a multiple integral with respect to
  the standard Wiener process $(W_{t})_{t\in \lbrack 0,1]}$; or as a
  multiple integral with respect to a fractional Brownian motion with
  suitable Hurst parameter.  We adopt the first approach throughout
  the paper: compare Definition \ref{defHermite} below.

\end{ex}

\vskip 1\baselineskip

In Section \ref{sec:stoch_conv} we treat the stochastic convolution
process
\begin{equation}
  \label{conv}
  W_{\bA}(t) = \int_0^t S(t-s) \, {\rm d}X_{s}.
\end{equation}
It is the weak solution of the linear stochastic evolution equation
${\rm d}Y(t) = \bA Y(t) \, {\rm d}t + {\rm d}X(t)$.  Our aim is to
prove that is a well-defined $\bX$-valued, mean square continuous,
$\mathcal{F}_t$-adapted process. We strenghten Assumption (\ref{h1})
by imposing the following.

\begin{assumpt}
  \label{Assumption 1}
  Let $X$ be given in the form
  \begin{align*}
    X_{t} =\sum_{j\geq 1}\sqrt{\lambda _{j} }x_{j}(t) e_{j}
  \end{align*}
  where $\lambda _{j}$, $e_{j}$ and $x_j(t)$ have been defined
  above. Suppose that the covariance $R$ of the process $(X_{t})_{t\in
    [0,T]}$ satisfies the following condition:
  \begin{align*}
    \left| \frac{\partial R}{\partial{s}\partial t} (s,t) \right| \leq
    c_{1}\vert t-s\vert ^{2H-2}+ g(s,t)
  \end{align*}
  for every $s,t \in [0,T]$ where $\vert g(s,t)\vert \leq
  c_{2}(st)^{\beta}$ with $\beta \in (-1,0)$, $H\in (\frac{1}{2},1)$
  and $c_{1}, c_{2}$ are strictly positive constant.
\end{assumpt}


\begin{rem}
  The fractional Brownian motion and the Hermite process satisfy
  Assumption \ref{Assumption 1} with $g$ identically zero. In the case of the
  bifractional Brownian motion the second derivative of the covariance
  can be divided into two parts. Indeed
  \begin{equation*}
    g(u,v)= c_{1} \vert u-v\vert ^{2HK-2}+ c_{2} (u^{2H}+ v^{2H})
    ^{K-2} (uv) ^{2H-1}:= g_{1}(u,v)+ g_{2}(u,v).
  \end{equation*}
  The part containing $g_{1}$ can be treated similarly to the case of
  the fractional Brownian motion. For the second term, note that
  \begin{equation*}
    u^{2H} + v^{2H} \geq 2(uv)^{H} \mbox{ and } (u^{2H}+ v^{2H})
    ^{K-2}\leq 2^{K-2} (uv) ^{H(K-2)}.
  \end{equation*}
  So,
  \begin{equation*}
    \vert g_{2}(u, v) \vert \leq cst. (uv) ^{HK -1}.
  \end{equation*}
  In conclusion, Assumption \ref{Assumption 1} is satisfied with
  $\beta =HK-1 \in (-1, 0)$.
\end{rem}

Our first main result is the following theorem concerning the
regularity of the stochastic convolution process under the Assumptions
\ref{Assumption A} and \ref{Assumption 1}.

\begin{thm}\label{thm:stoch_conv}
  In the above framework, fix $\alpha \in (0,H)$. Let $W_{\bA}$ be
  given by (\ref{conv}). Then $W_{\bA}$ exists in $L^2([0,T] \times
  \Omega;\bX)$ and it is ${\mathcal F}_t$-adapted.

  For every $\gamma <\alpha$ and $\varepsilon < \alpha -\gamma$ it holds that
  \begin{equation*}
    W_{\bA} \in C^{\alpha -\gamma -\varepsilon} \left( [0,T]; D(-A)^{\gamma} \right);
  \end{equation*}
  in particular for any fixed $t\in [0,T]$ the random variable
  $W_{\bA} (t)$ belongs to $D(-A) ^{\gamma }$.
\end{thm}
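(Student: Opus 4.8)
The plan is to reduce everything to one-dimensional Wiener integrals against the scalar processes $x_j$, exploit the smoothing of the analytic semigroup, and finally bootstrap the resulting $L^2$ estimates to pathwise Hölder regularity through the chaos structure of the noise. Using the series representation of $X$ and the independence of the copies $x_j$ I would write
\[
W_{\bA}(t) = \sum_{j\geq 1}\sqrt{\lambda_j}\int_0^t S(t-s)e_j\,\mathrm{d}x_j(s),
\]
so that each summand is an $\bX$-valued Wiener integral with respect to $x_j$. Since for a deterministic integrand the covariance of such an integral is governed by $\partial^2 R/\partial s\partial t$, combining this with the orthonormality of the $e_j$ and the nuclearity $\sum_j\lambda_j=\operatorname{Tr}Q<\infty$ yields, for every $\gamma\geq 0$,
\[
\mathbb{E}\big\|(-\bA)^\gamma W_{\bA}(t)\big\|_{\bX}^2 \leq (\operatorname{Tr}Q)\int_0^t\!\!\int_0^t \big\|(-\bA)^\gamma S(t-s)\big\|\,\big\|(-\bA)^\gamma S(t-r)\big\|\,\Big|\tfrac{\partial^2R}{\partial s\partial r}(s,r)\Big|\,\mathrm{d}s\,\mathrm{d}r.
\]
The only analytic input is the standard smoothing bound $\|(-\bA)^\gamma S(t)\|_{L(\bX)}\leq C_\gamma\, t^{-\gamma}e^{-\omega t}$, which follows from Assumption \ref{Assumption A}.

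Next I would insert Assumption \ref{Assumption 1}, splitting the kernel into the fractional part $c_1|s-r|^{2H-2}$ and the remainder $g$ with $|g(s,r)|\leq c_2(sr)^\beta$. The $g$-contribution factorizes into the square of a Beta integral $\int_0^t(t-s)^{-\gamma}s^\beta\,\mathrm{d}s$, finite for $\gamma<1$ and $\beta>-1$. The fractional part leads to $\int_0^t\int_0^t(t-s)^{-\gamma}(t-r)^{-\gamma}|s-r|^{2H-2}\,\mathrm{d}s\,\mathrm{d}r$, which by a scaling argument equals a constant times $t^{2H-2\gamma}$ and is finite precisely when $\gamma<H$. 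Since $\gamma<\alpha<H$, both terms are finite and bounded on $[0,T]$; taking $\gamma=0$ gives existence in $L^2([0,T]\times\Omega;\bX)$, while the general bound shows $W_{\bA}(t)\in D(-\bA)^\gamma$. Adaptedness is immediate because $W_{\bA}(t)$ is measurable with respect to the increments of $X$ up to time $t$.

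For the time regularity I would estimate, for $\tau<t$, the increment by splitting
\[
(-\bA)^\gamma\big(W_{\bA}(t)-W_{\bA}(\tau)\big)=\int_0^\tau(-\bA)^\gamma\big(S(t-s)-S(\tau-s)\big)\,\mathrm{d}X_s+\int_\tau^t(-\bA)^\gamma S(t-s)\,\mathrm{d}X_s.
\]
The second integral is handled exactly as above on $[\tau,t]$ and produces a factor $|t-\tau|^{2(H-\gamma)}$ (plus a harmless $g$-term). For the first integral the key device is the interpolation estimate $\|(S(h)-I)(-\bA)^{-\theta}\|_{L(\bX)}\leq C h^{\theta}$ for $\theta\in[0,1]$: writing $S(t-s)-S(\tau-s)=(S(t-\tau)-I)S(\tau-s)$ and choosing the \emph{balanced} exponent $\theta=\alpha-\gamma$ gives $\|(-\bA)^\gamma(S(t-s)-S(\tau-s))\|\leq C|t-\tau|^{\alpha-\gamma}(\tau-s)^{-\alpha}$. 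The residual singular integral then converges precisely because $\alpha<H$, and one obtains $\mathbb{E}\|(-\bA)^\gamma(W_{\bA}(t)-W_{\bA}(\tau))\|_{\bX}^2\leq C|t-\tau|^{2(\alpha-\gamma)}$.

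Finally, since for each $t$ the random variable $(-\bA)^\gamma W_{\bA}(t)$ lives in a fixed Wiener chaos (the first chaos in the Gaussian examples, the $q$-th chaos for the Hermite process), hypercontractivity upgrades the $L^2$ increment bound to $\mathbb{E}\|(-\bA)^\gamma(W_{\bA}(t)-W_{\bA}(\tau))\|^p\leq C_p|t-\tau|^{p(\alpha-\gamma)}$ for every $p$, and the Kolmogorov continuity criterion then yields a version that is Hölder of every order strictly below $\alpha-\gamma$, which accounts for the loss $\varepsilon$. I expect the main obstacle to be the increment estimate: the interpolation exponent $\theta$ must be chosen so as to simultaneously match the target Hölder exponent $\alpha-\gamma$ and keep the singular covariance integral convergent (this forces $\theta=\alpha-\gamma$ and uses $\alpha<H$ critically), and the $g$-term must be controlled near the origin, where the Beta integrals tend to degenerate.
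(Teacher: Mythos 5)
Your proposal is correct in its overall architecture but follows a genuinely different route from the paper. The paper uses the factorization method: it first proves (Lemma \ref{lem:4.5}) that $Y_\alpha(t)=\int_0^t(t-u)^{-\alpha}S(t-u)\,{\rm d}X_u$ is bounded in $L^p$ --- in the Gaussian case via the $L^2$ covariance estimate, in the chaos case via the Kwapien--Woyczynski inequality for sums of independent Hilbert-valued random variables combined with hypercontractivity (\ref{hyper}) --- and then (Proposition \ref{pr:4.6}) obtains all the space-time regularity at once from the purely deterministic mapping property $R_{\alpha,\gamma}\in L\left(L^{p}([0,T];\bX);C^{\alpha-\gamma-\frac1p}([0,T];D((-\bA)^{\gamma}))\right)$ together with the identity $(-\bA)^{\gamma}W_{\bA}=R_{\alpha,\gamma}Y_{\alpha}$. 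You instead estimate increments of $(-\bA)^{\gamma}W_{\bA}$ directly, using the analytic-semigroup interpolation bound $\|(S(h)-I)(-\bA)^{-\theta}\|\le Ch^{\theta}$ with the balanced choice $\theta=\alpha-\gamma$, upgrade to $L^p$ by hypercontractivity, and invoke Kolmogorov. Both routes are legitimate: the factorization method has the advantage that the only stochastic input is a bound on $Y_\alpha$ at fixed $t$ (no increment estimates for stochastic integrals are ever needed), while your route is more self-contained but needs the two-term splitting and the interpolation inequality. One small point in your chaos step: (\ref{hyper}) is stated for scalar multiple integrals, so to pass from $L^2$ to $L^p$ for the $\bX$-valued increment you should either argue coordinatewise (triangle inequality in $L^{p/2}$ applied to $\|\cdot\|_{\bX}^{2}=\sum_j\lambda_j\xi_j^2$, then scalar hypercontractivity on each $\xi_j$) or use the Kwapien--Woyczynski argument as the paper does.

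The one genuine soft spot is your treatment of the $g$-term, which you yourself flag. It is not ``harmless'': for the integral over $[\tau,t]$ the $g$-contribution is $\left(\int_\tau^t(t-s)^{-\gamma}s^{\beta}\,{\rm d}s\right)^{2}$, and after handling the degeneracy at the origin (split according to whether $\tau\ge t-\tau$ or not) the best uniform bound is of order $|t-\tau|^{2(1+\beta-\gamma)}$, not $|t-\tau|^{2(\alpha-\gamma)}$; the same cap appears in the first term, where $\left(\int_0^{\tau}(\tau-s)^{-\alpha}s^{\beta}\,{\rm d}s\right)^{2}\sim\tau^{2(1+\beta-\alpha)}$ is uniformly bounded in $\tau$ only when $\alpha\le 1+\beta$. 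Hence your argument delivers the claimed exponent $\alpha-\gamma-\varepsilon$ only under the extra restriction $\alpha\le 1+\beta$: this is vacuous for the fBm and Hermite examples (where $g\equiv 0$), but it bites for the bifractional Brownian motion, where $1+\beta=HK<H$ when $K<1$, so your proof there only reaches exponents below $HK-\gamma$. In fairness, the paper's own proof of Lemma \ref{lem:4.5} asserts that the corresponding term $I_2$ is ``always bounded'', which likewise requires $\alpha\le 1+\beta$, and the paper's later proposition on H\"older continuity of $W_{\bA}$ does restrict to $\alpha<H\wedge(\beta+1)$; so the limitation is shared, but in your write-up it should be stated as a hypothesis (or the exponent weakened to $(H\wedge(1+\beta))-\gamma-\varepsilon$) rather than dismissed.
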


\medskip

Now we consider the solution of the stochastic evolution equation
(\ref{eq:1}). We consider generalized mild solutions in the sense of
\cite[Section 5.5]{dpz:Ergodicity}: an $\bX$-valued continuous and
adapted process $u = \{u_t,\ t \ge 0\}$ is a mild solution of
(\ref{eq:1}) if it satisfies $\Pr$-a.s.\ the integral equation
\begin{equation}
  \label{eq:integral}
  u(t) = S(t)u_0 + \int_0^t S(t-s) F(u(s)) \, {\rm d}s + W_{\bA}(t).
\end{equation}

\begin{thm}\label{thm:ex-intro}
  In our setting, let $u_0 \in D(F)$ (resp. $u_0 \in \bX$). Then there
  exists a unique mild (resp.  generalized) solution
  \begin{equation*}
    u \in L^2_{\cF}(\Omega;C([0,T];\bX)) \cap L^2_{\cF}(\Omega;L^2([0,T];\bV))
  \end{equation*}
  to equation (\ref{eq:1}) which depends continuously on the
  initial condition:
  \begin{equation}
    \label{eq:dip_d_iniz}
    {\mathbb E} \left|u(t;u_0) - u(t;u_1)\right|^2_\bX \leq C
    \left|u_0 - u_1\right|^2_\bX.
  \end{equation}
\end{thm}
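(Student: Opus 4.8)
The plan is to remove the noise by the standard change of variables $v(t) = u(t) - W_{\bA}(t)$, reducing the stochastic equation to a deterministic evolution equation with random but path-by-path regular forcing, to which the theory of $m$-dissipative perturbations of analytic generators applies pathwise. Indeed, $u$ satisfies \eqref{eq:integral} if and only if $v$ satisfies
\begin{equation*}
  v(t) = S(t) u_0 + \int_0^t S(t-s)\, F\bigl(v(s) + W_{\bA}(s)\bigr)\,{\rm d}s,
\end{equation*}
which is the mild form of $v'(t) = \bA v(t) + F\bigl(v(t) + W_{\bA}(t)\bigr)$, $v(0) = u_0$. Fixing $\alpha \in (1/2,H)$ and $\gamma \in [1/2,\alpha)$ in Theorem \ref{thm:stoch_conv} (possible since $H > 1/2$) and recalling the identification $\bV = D((-\bA)^{1/2})$ for the coercive form $\mathfrak a$, the paths of $W_{\bA}$ lie a.s.\ in $C([0,T];\bV)$; fix such a path $w(\cdot) = W_{\bA}(\cdot,\omega)$.

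First I would establish, for a.e.\ fixed $\omega$, existence and uniqueness of the deterministic solution. The nonlinearity $x \mapsto F(x + w(t))$ is again $m$-dissipative (dissipativity is invariant under translation of the argument by a fixed element of $\bX$) and, by Assumption \ref{Assumption F}, continuous from $\bV$ to $\bX$ with polynomial growth; combined with Assumption \ref{Assumption A}, which makes $\bA$ the generator of an analytic, exponentially stable semigroup associated to $\mathfrak a$, the deterministic theory of dissipative evolution equations yields a unique mild solution $v(\cdot,\omega) \in C([0,T];\bX) \cap L^2([0,T];\bV)$. The key a priori bound is the energy inequality obtained by testing against $v$: using coercivity $\langle \bA v, v\rangle = -\mathfrak a(v,v) \le -\delta\|v\|_\bV^2$, the dissipativity bound $\langle F(v+w) - F(w), v\rangle \le 0$, and Young's inequality on $\langle F(w), v\rangle$, one obtains
\begin{equation*}
  |v(t)|_\bX^2 + \delta \int_0^t \|v(s)\|_\bV^2\,{\rm d}s \le |u_0|_\bX^2 + C\int_0^t |F(w(s))|_\bX^2\,{\rm d}s,
\end{equation*}
whose right-hand side is finite path-by-path because $\sup_{[0,T]}\|w\|_\bV < \infty$ and $F$ has polynomial growth.

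Next I would pass from the pathwise solution to an element of $L^2_\cF(\Omega;\cdot)$. Adaptedness and measurability of $\omega \mapsto v(\cdot,\omega)$ follow from the continuous (indeed locally Lipschitz) dependence of the deterministic solution on the forcing $w$ — obtained by the same dissipativity argument, splitting $F(v_1 + w_1) - F(v_2 + w_2)$ into a dissipative part and a remainder controlled by $F(v_2 + w_1) - F(v_2 + w_2)$ — together with the adaptedness of $W_{\bA}$ from Theorem \ref{thm:stoch_conv}. Taking expectations in the energy inequality gives
\begin{equation*}
  \bE \sup_{[0,T]} |v(t)|_\bX^2 + \delta\, \bE\int_0^T \|v(s)\|_\bV^2\,{\rm d}s \le |u_0|_\bX^2 + C\,\bE\int_0^T |F(W_{\bA}(s))|_\bX^2\,{\rm d}s .
\end{equation*}
\textbf{This is the step I expect to be the main obstacle}: the polynomial growth of $F$ forces control of high moments $\bE\,\|W_{\bA}(s)\|_\bV^{2p}$, which is immediate in the Gaussian examples but must be justified for the Hermite process; there one uses that $W_{\bA}(s)$ lives in a fixed Wiener chaos of order $q$, so hypercontractivity makes all $L^p(\Omega)$-norms equivalent to the $L^2(\Omega)$-norm already controlled by Theorem \ref{thm:stoch_conv}. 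With the right-hand side finite, $u = v + W_{\bA}$ inherits the claimed regularity, since both summands lie in $L^2_\cF(\Omega;C([0,T];\bX))$ and, because $\gamma \ge 1/2$, $W_{\bA} \in C([0,T];\bV) \subset L^2([0,T];\bV)$.

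Finally, uniqueness and the continuous-dependence bound \eqref{eq:dip_d_iniz} follow from a single estimate. For two initial data $u_0, u_1$ driven by the same noise, the difference $z = u(\cdot;u_0) - u(\cdot;u_1) = v_0 - v_1$ (the convolution cancels) solves $z' = \bA z + [F(v_0 + w) - F(v_1 + w)]$. Testing against $z$ and using $\langle \bA z, z\rangle = -\mathfrak a(z,z) \le 0$ together with $\langle F(v_0+w) - F(v_1+w), z\rangle \le 0$ (dissipativity, since $z = (v_0 + w) - (v_1 + w)$) gives $\frac{{\rm d}}{{\rm d}t}|z(t)|_\bX^2 \le 0$, hence $|z(t)|_\bX^2 \le |u_0 - u_1|_\bX^2$ pathwise; taking expectations yields \eqref{eq:dip_d_iniz} with $C = 1$, and $u_0 = u_1$ gives uniqueness. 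The generalized solution for $u_0 \in \bX$ is then obtained by approximating $u_0$ by $u_0^n \in D(F)$ and passing to the limit, the estimate \eqref{eq:dip_d_iniz} ensuring that the solution map extends continuously from $D(F)$ to all of $\bX$.
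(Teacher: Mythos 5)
Your overall route coincides with the paper's: subtract the stochastic convolution, use the identification $\bV = D((-\bA)^{1/2})$ (Remark \ref{rem:V=D(A^1/2)}) together with $H > \tfrac12$ to place the paths of $W_{\bA}$ in $C([0,T];\bV)$, run energy estimates based on coercivity of ${\mathfrak a}$ and dissipativity of $F$, and deduce uniqueness and continuous dependence (the paper obtains $e^{-2\omega t}$ in \eqref{eq:dip_dato_iniz}, you obtain the constant $1$). The genuine gap is in your first step: you dispose of pathwise existence for $v' = \bA v + F(v + w(t))$ by invoking ``the deterministic theory of dissipative evolution equations''. The paper warns explicitly (remark following the theorem) that no off-the-shelf result applies here: no dissipativity or generation property of $\bA$ is assumed on $\bV$, which is exactly what the standard dissipative-systems theorems require, compare \cite[Hypotheses 5.4 and 5.6]{dpz:Ergodicity}; moreover $F$ is only m-dissipative and continuous $\bV \to \bX$ with polynomial growth, so Picard iteration is unavailable, and the sum of two m-dissipative operators (here with a time-dependent translation in the argument) need not be m-dissipative, so nonlinear semigroup theory does not apply directly either. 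Your energy inequality is an a priori bound on a solution assumed to exist; it does not construct one.

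What the paper does instead---and what your proposal is missing---is the approximation scheme of Section \ref{sec:existence}: replace $F$ by its Yosida approximations $F_\alpha$ (Lipschitz, so the approximate problem \eqref{eq:d_app} is solvable by standard means), prove your energy estimate uniformly in $\alpha$ (estimate \eqref{eq:2}), use the inequality $\langle F_\alpha(x) - F_\beta(y), x-y \rangle_{\bX} \le (\alpha+\beta)\left[ |F_\alpha(x)|^2 + |F_\beta(y)|^2 \right]$ to show that $(y_\alpha)$ is Cauchy in $C([0,T];\bX) \cap L^2([0,T];\bV)$, and pass to the limit in the nonlinear term via Vitali's theorem. This scheme also retroactively justifies the formal ``testing against $v$'' and ``testing against $z$'' computations in your proposal, since they are performed on the regular approximants and survive the limit, as does the continuous-dependence bound \eqref{eq:reg_dip_in_cond_app}. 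On the other hand, the obstacle you flag---that taking expectations requires high moments of $\|W_{\bA}\|_{\bV}$ because $F$ grows polynomially, to be handled by hypercontractivity within a fixed Wiener chaos---is a fair point: the paper's $L^p$ bounds for the convolution are stated only in the $\bX$-norm, and your remark makes this step more explicit than the paper does, using exactly the tools the paper already has in place.
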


\begin{rem}
  Even for a Wiener perturbation, this result is not contained in the
  existing literature since we does not assume any dissipativity or
  generation property of $\bA$ on $\bV$, compare \cite[Hypothesis 5.4
  and 5.6]{dpz:Ergodicity}.
\end{rem}

With this result at hand, we can solve the model of a complete
neuronal network recently proposed in \cite{CM07}.  It is
well known that any single
neuron can be schematized as a collection of a \emph{dentritic tree}
that ends into a \emph{soma} at one end of \emph{an axon}, hence as a
\emph{tree} in the precise sense defined within the mathematical field
of \emph{graph theory}.  By introducing stochastic terms we can model
the chaotic fluctuations of synaptic activity and post-synaptic
elaboration of electronic potential. There is sufficient sperimental
evidence that, in order to capture the actual behaviour of the
neurobiological tissues, infinite dimensional, stochastic, nonlinear
reaction-diffusion models are needed.

Previous models used simplified version of the neuronal network or
just concentrate on single parts of the cell: compare
\cite{BoMa-IDAQP} for a thorough analysis of the FitzHugh Nagumo
system on a neuronal axon or \cite{BoMu} for the analysis of the
(passive) electric propagation in a dendritic tree in the subtreshold
regime. In our model, instead, we are based based on the deterministic
description of the whole neuronal network that has been recently
introduced in \cite{CM07}; therefore, we avoid to sacrify the
biological realism of the neuronal model and, further, we add a
manifold of different possible stochastic perturbations that can be
chosen as a model for the enviromental influence on the system. Notice
that already in \cite{BoMu} a fractional Brownian motion with Hurst
parameter $H > \frac12$ was chosen in order to model the (apparently
chaotical) perturbance acting on a neuronal network.  This choice is
not a premiere in neuroscience, since different considerations show
that real inputs may exhibit long-range dependence and
self-similarity: see for instance the contributions in~\cite[Part
II]{RanDin03}.


\section{Wiener Integrals with respect to Hilbert valued  Gaussian and
  non-Gaussian processes with covariance structure measure}

In this section we discuss the construction of a stochastic integral
with respect to the process $(X_t)_{t \in [0,T]}$, which is not
necessarily Gaussian.

Since our non-Gaussian examples will be given by stochastic processes
that can be expressed as multiple Wiener-It\^o integrals, we need to
briefly recall the basic facts related to their constructed and their
basic properties.

\subsection{Multiple stochastic integrals}

Let $(W_{t})_{t\in [0,T]}$ be a classical Wiener process on a standard
Wiener space $\left( \Omega, {\mathcal{F}}, {\mathbb P}\right)$. If
$f\in L^{2}([0,T]^{n})$ with $n\geq 1$ integer, we introduce the
multiple Wiener-It\^{o} integral of $f$ with respect to $W$. The basic
reference is the monograph \cite{N}.  Let $f\in {\mathcal{S}_{m}}$ be
an elementary function with $m$ variables that can be written as
\begin{align*}
  f=\sum_{i_{1},\ldots ,i_{m}}c_{i_{1},\ldots ,i_{m}}1_{A_{i_{1}}\times
    \ldots \times A_{i_{m}}}
\end{align*}%
where the coefficients satisfy $c_{i_{1},\ldots i_{m}}=0$ if two
indices $i_{k}$ and $i_{l}$ are equal and the sets $A_{i}\in
{\mathcal{B}}([0,T])$ are disjoints. For such a step function $f$ we
define
\begin{align*}
  I_{m}(f)=\sum_{i_{1},\ldots ,i_{m}}c_{i_{1},\ldots ,i_{m}}
  W(A_{i_{1}}) \ldots W(A_{i_{m}})
\end{align*}%
where we put $W([a,b])=W_{b}-W_{a}$. It can be seen that the
application $I_{m}$ constructed above from ${\mathcal{S}}_{m}$ to
$L^{2}(\Omega )$ is an isometry on ${\mathcal{S}}_{m}$ , i.e.%
\begin{equation}
  \bE \left[ I_{n}(f)I_{m}(g)\right] =n! \langle f,g\rangle
  _{L^{2}([0,T]^{n})}\mbox{ if }m=n
  \label{isom}
\end{equation}%
and%
\begin{equation*}
  \bE \left[ I_{n}(f)I_{m}(g)\right] = 0 \mbox{ if }m\not=n.
\end{equation*}%

Since the set ${\mathcal{S}_{n}}$ is dense in $L^{2}([0,T]^{n})$ for
every $n\geq 1$, the mapping $I_{n}$ can be extended to an isometry
from $L^{2}([0,T]^{n})$ to $L^{2}(\Omega)$ and the above properties (\ref{isom})
hold true for this extension.

We recall the following hypercontractivity property  for the $L^{p}$ norm of a multiple
stochastic integral (see \cite[Theorem 4.1]{Major})
\begin{equation}
  \label{hyper}
  \bE \left| I_{m}(f) \right| ^{2m} \leq c_{m} \left( \bE I_{m}(f)^{2}
  \right)^{m}
\end{equation}
where $c_{m}$ is an explicit positive constant and $f\in
L^{2}([0,T]^{m})$.

\subsection{Wiener integrals: the one-dimensional case}
\label{3.3.2}

The idea to define Wiener integrals with respect to a centered
Gaussian (or non Gaussian) process $(X_{t})_{t\in [0,T]}$ is natural
and standard.  Denote by $R(t,s)= \mathbb{E}(X_{t}X_{s}) $ the covariance of
the process $X$.  Consider $\mathcal{E}$ the set of step functions on
$[0,T]$ defined as
\begin{equation}
\label{step} f= \sum_{i=0}^{n-1} c_{i} 1_{[t_{i} , t_{i+1}] }
\end{equation}
where $\pi: 0=t_{0} < t_{1} <\ldots < t_{n} =T$  denotes a partition
of $[0,T]$ and $c_{i}$ are real numbers. For a such $f$ it is
standard to define
\begin{equation*}
I(f)= \sum_{i=0}^{n-1} c_{i} \left( X_{t_{i+1}}-X_{t_{i}}\right).
\end{equation*}
It holds that
\begin{align*}
  \bE I(f)^{2} &= \sum_{i,j=0}^{n-1} c_{i}c_{j} \mathbb{E}\left(
    X_{t_{i+1}}-X_{t_{i}}\right)\left( X_{t_{i+1}}-X_{t_{i}}\right)
  \\
  &= \sum_{i,j=0}^{n-1} c_{i}c_{j}\left( R(t_{i+1}, t_{j+1})-
    R(t_{i+1}, t_{j})-R(t_{i}, t_{j+1}) +R(t_{i}, t_{j}) \right).
\end{align*}
The next step is to extend, by density, the application $I:
\mathcal{E} \to L^{2}(\Omega)$ to a bigger space, using the fact that
it is an isometry. This construction has been done in \cite{KRT} and
we will describe here the main ideas. In particular, we shall see that
the construction depends on the covariance structure of the process
$X$; the covariance of $X$ should define a measure on the Borel sets
of $[0,T]^{2}$. The function $R$ defines naturally a finite additive
measure $\mu$ on the algebra of finite disjoint rectangles included in
$[0,T]^{2}$ by
\begin{equation*}
  \mu (A) = R(b,d)+ R(a,c) -R(a,d)-R(c,b)
\end{equation*}
if $A = [a, b)\times [c,d)$.

In order to extend the Wiener integral to more general processes, we
 assume that the covariance of the process $X$ satisfies the
following condition:
\begin{equation}
  (s,t) \to \frac{\partial^{2}R}{\partial s \partial t} \in L^{1}([0,T] ^{2}).
\end{equation}
(compare with Assumption \ref{ass:h1}). This is a particular case of
the situation considered in \cite{KRT} where the integrator is assumed
to have a covariance structure measure in the sense that the
covariance $R$ defines a measure on $[0,T]^{2}$.

We have already seen some examples of stochastic processes that
satisfy (\ref{h1}): the {\em fractional Brownian motion with Hurst index bigger than $\frac{1}{2}$}, the {\em
  bifractional Brownian motion with $2HK>1$} and the {\em Hermite process}, for
instance.

The next step is to extend the definition of the Wiener integral to a
bigger class of integrands. We introduce $\left| \mathcal{H}\right|$
the set of measurable functions $f:[0,T] \to \mathbb{R}$ such that
\begin{equation}
  \label{normLu}
  \int_{0}^{T} \int_{0}^{T} \vert f(u)f(v)\vert \left|
    \frac{\partial^{2}R}{\partial u \partial v}(u,v) \right| \, {\rm
    d}u \, {\rm d}v <\infty.
\end{equation}
On the set $\left| \mathcal{H}\right|$ we define the inner product
\begin{equation}
  \label{scaLu}
  \langle f, h\rangle _{ \mathcal{H}} =\int_{0}^{T} \int_{0}^{T} f(u)
  h(v) \frac{\partial^{2}R}{\partial u\partial v}(u,v) \, {\rm d}u \,
  {\rm d}v
\end{equation}
and its associated seminorm
\begin{align*}
  \Vert f\Vert ^{2}_{\mathcal{H}} = \int_{0}^{T} \int_{0}^{T} f(u)
  f(v) \frac{\partial^{2}R}{\partial u\partial v}(u,v) \, {\rm d}u \,
  {\rm d}v.
\end{align*}
We also define
\begin{equation}
  \label{normlu-bar}
  \Vert f\Vert ^{2}_{\left| \mathcal{H}\right| }  =\int_{0}^{T}
  \int_{0}^{T} \vert f(u) f(v)\vert \left|
    \frac{\partial^{2}R}{\partial u\partial v}(u,v) \right| \, {\rm d}u \,
  {\rm d}v.
\end{equation}
It holds that $\mathcal{E}\subset \left| \mathcal{H}\right| $ and for
every $f,h\in \mathcal{E}$
\begin{equation}
  \label{iso18}
  \bE I(f)^{2} = \bE \Vert f \Vert^{2}_{\mathcal{H}}.
\end{equation}
The following result can be found in \cite{KRT}.

\begin{prop}
  The set $\mathcal{E}$ is dense in $\left| \mathcal{H}\right| $ with
  respect to $\left\| \cdot \right\|_{|\mathcal{H}|}$ and in
  particular to the seminorm $\left\| \cdot
  \right\|_{\mathcal{H}}$. The linear application $ \Phi: \mathcal{E}
  \longrightarrow L^2(\Omega)$ defined by
  \begin{align*}
    \varphi \longrightarrow I(\varphi)
  \end{align*}
  can be continuously extended to $\left| \mathcal{H}\right|$ equipped
  with the $\left\| \cdot \right\|_{\mathcal{H}}$-norm. Moreover we
  still have identity (\ref{iso18}) for any $\varphi \in \left|
    \mathcal{H}\right|$.
\end{prop}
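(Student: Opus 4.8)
The proposition asserts two things: that the elementary step functions $\mathcal{E}$ are dense in $|\mathcal{H}|$ with respect to $\|\cdot\|_{|\mathcal{H}|}$ (and hence with respect to the weaker seminorm $\|\cdot\|_{\mathcal{H}}$), and that the integral map $\Phi$ extends continuously to all of $|\mathcal{H}|$ while preserving the isometry identity \eqref{iso18}. Let me sketch how I would approach each part.

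\begin{proof}[Proof sketch]
The plan is to treat density and the isometric extension separately, leaning on the measure-theoretic structure provided by Assumption \ref{ass:h1}.

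\textbf{Density.} First I would exploit the fact that, under \eqref{h1}, the kernel $\left|\frac{\partial^2 R}{\partial u \partial v}\right|$ defines a finite positive measure $\nu$ on $[0,T]^2$ via $\mathrm{d}\nu(u,v) = \left|\frac{\partial^2 R}{\partial u \partial v}(u,v)\right|\,\mathrm{d}u\,\mathrm{d}v$. The norm $\|f\|^2_{|\mathcal{H}|}$ is then simply $\int\int |f(u)f(v)|\,\mathrm{d}\nu(u,v)$, i.e.\ the $L^1$-norm of the tensor $f\otimes f$ against $\nu$. Given $f\in|\mathcal{H}|$, the goal is to produce step functions $f_n\in\mathcal{E}$ with $\|f-f_n\|_{|\mathcal{H}|}\to 0$. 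I would first reduce to bounded compactly supported $f$ by a truncation argument, then approximate $f$ in the sup-sense (or $\nu$-a.e.\ with uniform bound) by simple functions adapted to finer and finer partitions $\pi$ of $[0,T]$; since the eigenfunction structure is irrelevant here, the classical approximation of measurable functions by step functions, together with dominated convergence applied to the product kernel against the finite measure $\nu$, yields convergence in $\|\cdot\|_{|\mathcal{H}|}$. The implication for $\|\cdot\|_{\mathcal{H}}$ follows at once from the pointwise domination $\|g\|_{\mathcal{H}}\leq\|g\|_{|\mathcal{H}|}$, which holds because $\left|\frac{\partial^2 R}{\partial u\partial v}\right|$ dominates $\frac{\partial^2 R}{\partial u\partial v}$.

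\textbf{Isometric extension.} On $\mathcal{E}$ the isometry \eqref{iso18} holds by the explicit computation of $\bE I(f)^2$ carried out just before the statement, once one checks that the finitely additive set function $\mu(A) = R(b,d)+R(a,c)-R(a,d)-R(c,b)$ on rectangles agrees with integrating $\frac{\partial^2 R}{\partial u\partial v}$, which is exactly the content of writing $R$ as the double integral of its mixed derivative (as in the fBm example). Given $f\in|\mathcal{H}|$ and an approximating sequence $f_n\in\mathcal{E}$, the identity $\bE(I(f_n)-I(f_m))^2 = \|f_n-f_m\|^2_{\mathcal{H}}$ together with Cauchy-ness of $(f_n)$ in $\|\cdot\|_{|\mathcal{H}|}$ (hence in $\|\cdot\|_{\mathcal{H}}$) shows that $(I(f_n))$ is Cauchy in $L^2(\Omega)$; I define $\Phi(f)$ as its limit and verify independence of the approximating sequence. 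Passing \eqref{iso18} to the limit gives the identity for all $f\in|\mathcal{H}|$.

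\textbf{Main obstacle.} The delicate point is that $\|\cdot\|_{\mathcal{H}}$ is only a \emph{seminorm}: the signed kernel $\frac{\partial^2 R}{\partial u\partial v}$ need not be positive, so $\|f\|_{\mathcal{H}}=0$ does not force $f=0$, and the completion is really taken in the stronger norm $\|\cdot\|_{|\mathcal{H}|}$. I would therefore be careful to establish Cauchy-ness in the stronger norm first and only then project to the seminorm, ensuring that the limit $\Phi(f)$ is well defined independently of the representative. This is precisely why the density statement is formulated with respect to $\|\cdot\|_{|\mathcal{H}|}$, with the $\|\cdot\|_{\mathcal{H}}$ density following as a consequence rather than being proved directly.
\end{proof}
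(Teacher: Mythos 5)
First, a point of comparison: the paper offers no proof of this proposition at all --- it is quoted from the reference \cite{KRT} and used as a black box, so your sketch can only be measured against what is actually needed, not against an internal argument. At the level of strategy your plan is the natural one: read $\left|\frac{\partial^2R}{\partial u\partial v}\right|{\rm d}u\,{\rm d}v$ as a finite positive measure $\nu$ on $[0,T]^2$ (finiteness is exactly \eqref{h1}), prove density of $\mathcal{E}$ by truncation plus step-function approximation and dominated convergence, then extend $I$ using the isometry on step functions.

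The gap is in the cross terms, and your ``main obstacle'' paragraph points at the wrong object. The seminorm $\left\|\cdot\right\|_{\mathcal{H}}$ is the harmless one: on $\mathcal{E}$ it coincides with the variance $\bE I(\varphi)^2$, so the form $\langle\cdot,\cdot\rangle_{\mathcal{H}}$ is positive semidefinite there and obeys Cauchy--Schwarz and the triangle inequality. The troublesome functional is $\left\|\cdot\right\|_{\left|\mathcal{H}\right|}$: it is built from the kernel $\left|\partial^2R/\partial u\partial v\right|$, and the absolute value of a positive-definite kernel need not be positive definite (for instance $\cos(u-v)$ is the mixed covariance derivative of the Gaussian process $\xi\sin t+\eta(1-\cos t)$, while $|\cos(u-v)|$ fails to be positive definite on long enough time intervals). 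Consequently the bound $\int_0^T\!\!\int_0^T|g(u)||h(v)|\left|\frac{\partial^2R}{\partial u\partial v}\right|{\rm d}u\,{\rm d}v\le\|g\|_{\left|\mathcal{H}\right|}\|h\|_{\left|\mathcal{H}\right|}$ --- equivalently, the triangle inequality for $\left\|\cdot\right\|_{\left|\mathcal{H}\right|}$ --- is not available under \eqref{h1} alone, yet your sketch uses it implicitly at least three times: (i) to chain the truncation stage with the bounded-function discretization stage, since the mixed error term $\int\!\!\int|f|1_{\{|f|>M\}}(u)\,|f_M-s|(v)\,{\rm d}\nu(u,v)$ is not controlled by the two separate errors (note that $f\in\left|\mathcal{H}\right|$ does not even guarantee that $|f|$ is integrable against the marginals of $\nu$); (ii) to pass from $\|f-f_n\|_{\left|\mathcal{H}\right|}\to0$ to ``$(f_n)$ is Cauchy'', which is what makes $I(f_n)$ Cauchy in $L^2(\Omega)$ and makes the limit independent of the approximating sequence; (iii) to pass \eqref{iso18} to the limit, which needs $\langle f_n,f_n\rangle_{\mathcal{H}}\to\langle f,f\rangle_{\mathcal{H}}$ and hence control of mixed terms involving $f$ and $f-f_n$ simultaneously. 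These cross terms are precisely the technical content hidden behind the citation. A workable repair within the scope of this paper is to invoke Assumption \ref{Assumption 1}: there the absolute kernel is dominated by $c_1|u-v|^{2H-2}+c_2(uv)^{\beta}$, which \emph{is} positive semidefinite (the first summand is the fBm kernel, the second is rank one), so running your argument with this dominating kernel restores Cauchy--Schwarz for all mixed terms; this proves the statement on the corresponding, possibly slightly smaller, class of integrands, which contains every integrand the paper actually uses.
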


We will set $\int_0^T \varphi \, {\rm d} X = \Phi(\varphi)$ and it
will be called {\bf the Wiener integral} of $\varphi$ with respect to
$X$.

\vskip0.3cm

We remark below that if the integrator process is a process in the
$n$th Wiener chaos then the Wiener integral with respect to $X$ is
again an element of the $n$th Wiener chaos.

\begin{rem}
  \label{rem:4.3}
  Suppose that the process $X$ can be written as
  $X_{t}=I_{k}(L_{t}(\cdot))$ with $k\geq 1$ and $L_{t} \in
  L^{2}([0,T]^{k})$ for every $t\in [0,T]$. Then for every $\varphi
  \in \left| \mathcal{H}\right|$ the Wiener integral $\int_0^T \varphi
  \, {\rm d} X$ is also in the $k$th Wiener chaos. Indeed, for simple
  functions of the form (\ref{step}) it is obvious and then we use the
  fact that the $k$th Wiener chaos is stable with respect to the
  $L^{2}$ convergence, that is, a sequence of random variables in the
  $k$th Wiener chaos convergent in $L^{2}$ has as limit a random
  variable in the $k$th Wiener chaos.
\end{rem}

\begin{rem}
  Assumption \ref{Assumption 1} implies condition (\ref{h1}). In
  particular the process $X$ whose covariance satisfies Assumption
  \ref{Assumption 1} have a covariance structure measure and the
  Wiener integral $\int_{0}^{T} \varphi \, {\rm d}X$ exists for every
  $\varphi \in \left| \mathcal{H}\right|$.  Indeed,
  \begin{multline*}
    \int_{0}^{T} \int_{0}^{T} \left| \frac{\partial
        R}{\partial{s}\partial t} (s,t) \right| \, {\rm d}s \, {\rm
      d}t \leq c_{1} \int_{0}^{T} \int_{0}^{T} \vert s-t\vert^{2H-2}
    \, {\rm d}s \, {\rm d}t + c_{2}\int_{0}^{T} \int_{0}^{T} (st) ^{\beta} \, {\rm
      d}s \, {\rm d}t
    \\
    \leq c \left( T^{2H} + T^{2(\beta +1)}\right).
  \end{multline*}
\end{rem}

Let us discuss now some examples. Firstly we refer to Gaussian
processes (fractional and bifractional Brownian motion).

\begin{ex}{\it The case of the fractional Brownian motion with
    $H>\frac{1}{2}$. } In this case $\left| \mathcal{H}\right|$ is the
  space of measurable functions $f:[0,T] \to \mathbb{R}$ such that
  \begin{equation*}
    \int_{0}^{T} \int_{0}^{T} \vert f(u) f(v) \vert u-v\vert ^{2H-2}
    \, {\rm d}u \, {\rm d}v <\infty .
  \end{equation*}
  On the other hand, for this integrator one can consider bigger
  classer of Wiener integrands. The natural space for the definition
  of the Wiener integral with respect to a Hermite process is the
  space $\mathcal{H}$ which is the closure of $\mathcal{E}$ with
  respect to the scalar product
  \begin{align*}
    \langle 1_{[0,t]}, 1_{[0,s]} \rangle _{\mathcal{H}} = R(t,s).
  \end{align*}
  We recall that $\mathcal{H}$ can be expressed using fractional
  integrals and it may contain distributions. Recall also that
  \begin{equation*}
    L^{2}([0,T])\subset L^{\frac{1}{H}} ([0,T]) \subset \left|
      \mathcal{H}\right| \subset \mathcal{H}
  \end{equation*}
  The Wiener integral with respect to Hermite processes can be also
  written as a Wiener integral with respect to the standard Brownian
  motion through a transfer operator (see e.g. \cite{N}).
\end{ex}

\begin{ex}
  \label{EBFBM} { \it The bifractional Brownian motion with $2HK>1$.}
  Recall the the bifractional
  Brownian motion $(B^{H,K}_{t})_{t \in [0,T]}$ is a centered Gaussian
  process, starting from zero, with covariance
  \begin{equation}
    \label{cov-bi}
    R^{H,K}(t,s) := R(t,s)= \frac{1}{2^{K}}\left( \left(
        t^{2H}+s^{2H}\right)^{K} -\vert t-s \vert ^{2HK}\right)
  \end{equation}
  with $H\in (0,1)$ and $K\in (0,1]$.

  We can write the covariance function as
  \begin{align*}
    R(s_1,s_2)=R_1(s_1,s_2) + R_2(s_1,s_2),
  \end{align*}
  where
  \begin{align*}
    &R_1(s_1,s_2) = \frac{1}{2^K} \left(s_1^{2H} + s_2^{2H}\right)^K -
    \left( s_1^{2HK} + s_2 ^{2HK}\right)
    \\
    \text{and }\qquad &R_2(s_1,s_2) =
    -\frac{1}{2^K} \vert s_2-s_1 \vert^{2HK} + s_1^{2HK} + s_2 ^{2HK}.
  \end{align*}
  We therefore have
  \begin{align*}
    \frac{\partial^2R_{1}}{\partial s_1 \partial s_2} =
    \frac{4H^2K(K-1)}{2^K} \left(s_1^{2H} + s_2^{2H}\right)^{K-2}
    s_1^{2H-1}s_2^{2H-1}.
  \end{align*}
  Since $R_1$ is of class $C^2((0,T]^2)$ and
  $\frac{\partial^2R_{1}}{\partial s_1 \partial s_2}$ is always
  negative, $R_1$ is the distribution function of a negative
  absolutely continuous finite measure, having
  $\frac{\partial^2R_{1}}{\partial s_1 \partial s_2}$ for density.

  Concerning the term $R_2$ we suppose $2HK > 1$. The part denoted by
  $R_2$ is (up to a constant) also the covariance function of a
  fractional Brownian motion of index $HK$ and
  $\frac{\partial^2R_{2}}{\partial s_1 \partial s_2} = 2HK
  (2HK-1)\left| s_1-s_2\right|^{2HK-2}$ which belongs of course to
  $L^1([0,T]^2)$. We also recall that the bifractional Brownian motion
  is a self-similar process with self-similarity index $HK$, it has
  not stationary increments, it is not Markovian and not a
  semimartingale for $2HK>1$.

  A significant subspace
  included in $\left| \mathcal{H}\right| $ is the set $L^2([0,T])$; if
  $K=1$ and $H = \frac{1}{2}$, there is even equality, since $X$ is a
  classical Brownian motion (see \cite{KRT}).
\end{ex}

Let us now give a non-Gaussian example.

\begin{ex} {\it The Hermite process $Z^{(q,H)}:=Z$ of order $q$
    with selfsimilarity order $H$. }

  The fractional Brownian process $(B_{t}^{H})_{t\in \lbrack 0,1]}$
  with Hurst parameter $H\in (0,1)$ can be written as
  \begin{equation*}
    B_{t}^{H}=\int_{0}^{t} K^{H}(t,s) \, {\rm d}W_{s},\quad t\in \lbrack 0,1]
  \end{equation*}%
  where $(W_{t},t\in \lbrack 0,T])$ is a standard Wiener process, the
  kernel $K^{H}\left( t,s\right)$ has the expression $c_{H}s^{1/2-H}
  \int_{s}^{t}(u-s)^{H-3/2}u^{H-1/2} \, {\rm d}u$ where $t>s$ and
  $c_{H}=\left( \frac{H(2H-1)}{\beta (2-2H,H-1/2)}\right) ^{1/2}$ and
  $\beta (\cdot ,\cdot )$ is the Beta function. For $t>s$, the
  kernel's derivative is $\frac{\partial K^{H}}{\partial
    t}(t,s)=c_{H}\left( \frac{s}{t}\right)
  ^{1/2-H}(t-s)^{H-3/2}$. Fortunately we will not need to use these
  expressions explicitly, since they will be involved below only in
  integrals whose expressions are known.

  We will denote by $(Z_{t}^{(q,H)})_{t\in \lbrack 0,1]}$ the Hermite
  process \textit{with self-similarity parameter} $H\in \left(
    1/2,1\right) $. Here $q\geq 1$ is an integer. Let us state the
  formal definition of this process.

  \begin{defn}
    \label{defHermite} The Hermite process $(Z_{t}^{(q,H)})_{t\in
      \lbrack 0,1]}$ of order $q\geq 1$ and with self-similarity
    parameter $H\in (\frac{1}{2},1)$ is given by
    \begin{multline}
      Z_{t}^{(q,H)} = d(H) \int_{0}^{t}\ldots\int_{0}^{t} {\rm
        d}W_{y_{1}} \ldots {\rm d}W_{y_{q}} \left(
        \int_{y_{1}\vee\ldots\vee y_{q}}^{t} \partial_{1}
        K^{H^{\prime}}(u,y_{1}) \ldots \partial_{1}
        K^{H^{\prime}}(u,y_{q}) {\rm d}u\right) ,
      \\
      t\in
      \lbrack0,1]
      \label{z1}
    \end{multline}
    where $K^{H^{\prime}}$ is the usual kernel of the fractional
    Brownian motion and
    \begin{equation}
      H^{\prime}=1+\frac{H-1}{q}\Longleftrightarrow(2H^{\prime}-2)q=2H-2.
      \label{H'}
    \end{equation}
  \end{defn}

  Of fundamental importance is the fact that the covariance of
  $Z^{\left( q,H\right) }$ is identical to that of fBm, namely
  \begin{equation*}
    \mathbb{E}\left[ Z_{s}^{\left( q,H\right) }Z_{t}^{\left( q,H\right) }\right]
    =\frac{1}{2}(t^{2H}+s^{2H}-|t-s|^{2H}).
  \end{equation*}
  The constant $d(H)$ is chosen to have the variance equal to 1. We
  stress that $Z^{\left( q,H\right) }$ is far from Gaussian for $q>1$,
  since it is formed of multiple Wiener integrals of order $q$ (see
  also \cite{Ta2}).

  \vskip0.3cm

  The basic properties of the Hermite process are listed below:

  \begin{itemize}
  \item the Hermite process $Z^{(q)}$ is $H$-self-similar and it has
    stationary increments.
  \item the mean square of the increment is given by
    \begin{align}
      \mathbb{E}\left[ \left\vert Z_{t}^{(q,H)}-Z_{s}^{(q,H)}\right\vert ^{2}
      \right] =|t-s|^{2H};
      \label{canonmetric}
    \end{align}
    as a consequence, it follows will little extra effort from
    Kolmogorov's continuity criterion that $Z^{(q,H)}$ has
    H\"{o}lder-continuous paths of any exponent $\delta <H$.
  \item it exhibits long-range dependence in the sense that
    \begin{align*}
      \sum_{n\geq 1} \mathbb{E}\left[
        Z_{1}^{(q,H)}(Z_{n+1}^{(q,H)}-Z_{n}^{(q,H)})\right] = \infty.
    \end{align*}
    In fact, the summand in this series is of order $n^{2H-2}$. This
    property is identical to that of fBm since the processes share the
    same covariance structure, and the property is well-known for fBm
    with $H>1/2$.
  \item for $q=1$, $Z^{(1,H)}$ is standard fBm with Hurst parameter
    $H$, while for $q\geq 2$ the Hermite process is not Gaussian. In
    the case $q=2$ this stochastic process is known as \emph{the
      Rosenblatt process}.
  \end{itemize}

  In this case the class of
  integrands $\mathcal{H}$ is the same as in the case of the
  fractional Brownian motion. We will also note that, from Remark
  \ref{rem:4.3}, the Wiener integral with respect to the Hermite
  process $\int_{0}^{T} \varphi \, {\rm d}Z$ is an element of the
  $k$th Wiener chaos. Moreover, it has been proven in \cite{MaTu} that
  for every $\varphi \in \left| \mathcal{H} \right|$ we have
  \begin{align*}
    \int_{0}^{T} f(u)dZ(u) = \int_{0}^{T}\ldots \int_{0}^{T}
    I(f)(y_{1}, y_{2}, \dots, y_{k}) \, {\rm d}B(y_{1}) \, {\rm
      d}B(y_{2}) \dots \, {\rm d}B(y_{k})
  \end{align*}
  where $(B_{t}) _{t\in [0,T]}$ is a Wiener process and we denoted by $I$ the following transfer operator
\begin{align*}
  I(f)(y_{1}, y_{2}, \dots, y_{k})= \int_{y_{1}\vee \dots\vee
    y_{k}}^{T} f(u) \partial_{1} K^{H'} (u, y_{1})...\partial_{1}
  K^{H'} (u, y_{k}) \, {\rm d}u
\end{align*}
where $H'$ is defined by (\ref{H'}).
\end{ex}

\subsection{The infinite-dimensional case}

Let
\begin{equation*}
  X_{t} =\sum_{j=1} \sqrt{\lambda _{j}} x_{j}(t) e_{j} \qquad t\in [0,T],
\end{equation*}
be a $\bX$-valued centered process with covariance $R(t,s)Q$.

Let $G:[0,T] \to L(\bX)$ and let $(e_{j}) _{j\geq 1}$ be a complete
orthonormal system in $\bX$. Assume that for every $j\geq 1$ the
function $G(\cdot) e_{j}$ belongs to the space $\left| \mathcal{H}
\right|$. The we define the Wiener integral of $G$ with respect to $X$
by
\begin{align*}
  \int_{0}^{T} G \, {\rm d}X = \sum_{j\ge 1} \sqrt{\lambda_{j}}
  \int_{0}^{T} G(s) e_{j} \, {\rm d}x_{j}(s)
\end{align*}
where the Wiener integral with respect to ${\rm d}x_{j}$ has been
defined above in  paragraph \ref{3.3.2}.

\begin{rem}
  The above integral is well-defined as an element of $L^{2}(\Omega;
  \bX)$ and we have the bound
  \begin{multline*}
    \bE \left\Vert \int_{0}^{T} G \, {\rm d}B \right\Vert^{2} \leq
    Tr(Q) \int_{0}^{T} \int_{0}^{T} \Vert G(u) \Vert_{L(V)} \Vert G(v)
    \Vert_{L(V)} \left| \frac{\partial ^{2} R}{\partial u\partial
        v}(u,v )\right| \, {\rm d}u \, {\rm d}v
    \\
    \leq Tr(Q) \left\| \Vert G(\cdot ) \Vert _{L(V)} \right\|_{\left|
        \mathcal{H} \right| } ^{2}.
  \end{multline*}
\end{rem}


\section{The stochastic convolution process}
\label{sec:stoch_conv}

There exists a well established theory on stochastic evolution
equations in infinite dimensional spaces, see Da Prato and Zabcyck
\cite{dpz:Ergodicity}, that we shall apply in order to show that
Eq.(\ref{eq:integral}) admits a unique solution. Let us recall from
Assumption \ref{Assumption A} that $\bA$ is the infinitesimal
generator of a strongly continuous semigroup $(S(t))_{t\geq 0}$, on
$\bX$ that is exponentially stable.

In this setting, we are concerned with the so-called stochastic
convolution process
\begin{equation*}
  \tag{\ref{conv}}
  W_{\bA}(t) = \int_0^t S(t-s) \, {\rm d}X_{s}.
\end{equation*}
It is the weak solution of the linear stochastic evolution equation
${\rm d}Y(t) = \bA Y(t) \, {\rm d}t + {\rm d}X(t)$.  Our aim is to
prove that is a well-defined mean square continuous,
$\mathcal{F}_t$-adapted process. Let us make the following assumption:

\begin{prop}
  Assume that the covariance function $R$ satisfies (\ref{h1}). Then,
  for every $t\in [0,T]$, the stochastic convolution  given by (\ref{conv})
  exists in $L^{2}([0,T] ;\bX)$ and it is $\mathcal{F} _{t}$ adapted.
\end{prop}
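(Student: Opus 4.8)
The plan is to read $W_{\bA}(t)$ as the Wiener integral of the operator-valued integrand $G_t(s) := S(t-s)\mathbf{1}_{[0,t]}(s)$ against $X$, in the sense of the infinite-dimensional construction recalled above, and then to extract square-integrability and adaptedness from the isometry estimate. First I would fix $t\in[0,T]$ and verify that $G_t$ satisfies the hypothesis of that construction, i.e.\ $G_t(\cdot)e_j\in|\mathcal{H}|$ for every $j$. By Assumption \ref{Assumption A} the semigroup is bounded on $[0,T]$, $\|S(t-s)\|_{L(\bX)}\le Me^{-\omega(t-s)}\le M$ for $0\le s\le t$, so $\|G_t(s)e_j\|_\bX\le M$ uniformly; Cauchy--Schwarz then dominates the relevant integrand by $M^2$, and (\ref{h1}) gives
\begin{equation*}
  \int_0^T\!\!\int_0^T \|G_t(u)e_j\|_\bX\|G_t(v)e_j\|_\bX\left|\frac{\partial^2R}{\partial u\partial v}(u,v)\right|{\rm d}u\,{\rm d}v \le M^2\left\|\frac{\partial^2R}{\partial u\partial v}\right\|_{L^1([0,T]^2)}<\infty,
\end{equation*}
so each scalar integral $\int_0^t S(t-s)e_j\,{\rm d}x_j(s)$ is defined and $G_t(\cdot)e_j\in|\mathcal{H}|$.

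Next I would prove convergence of the defining series in $L^2(\Omega;\bX)$ with a second moment bound uniform in $t$. Since the $x_j$ are independent and centred, the cross terms vanish, and expanding each term in the basis $(e_k)$ and applying the one-dimensional isometry (\ref{iso18}) coordinatewise gives
\begin{equation*}
  \mathbb{E}\|W_{\bA}(t)\|_\bX^2 = \sum_{j\ge1}\lambda_j\int_0^t\!\!\int_0^t\langle S(t-u)e_j,S(t-v)e_j\rangle_\bX\,\frac{\partial^2R}{\partial u\partial v}(u,v)\,{\rm d}u\,{\rm d}v.
\end{equation*}
Bounding $|\langle S(t-u)e_j,S(t-v)e_j\rangle_\bX|\le M^2$ and using $\sum_j\lambda_j=\mathrm{Tr}(Q)<\infty$, this reproduces the estimate recorded in the Remark following the infinite-dimensional construction and yields $\mathbb{E}\|W_{\bA}(t)\|_\bX^2\le \mathrm{Tr}(Q)\,M^2\,\|\partial^2R/\partial u\partial v\|_{L^1([0,T]^2)}$, finite by (\ref{h1}); in particular $W_{\bA}(t)\in L^2(\Omega;\bX)$ for each $t$. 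The decisive point is that this bound is independent of $t$, so integrating over $[0,T]$ gives $\int_0^T\mathbb{E}\|W_{\bA}(t)\|_\bX^2\,{\rm d}t<\infty$, i.e.\ $W_{\bA}\in L^2([0,T]\times\Omega;\bX)$.

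For adaptedness I would argue by approximation: for each $j$ the integral $\int_0^t S(t-s)e_j\,{\rm d}x_j(s)$ is, by the construction of Section \ref{3.3.2}, an $L^2(\Omega)$-limit of integrals of step functions supported in $[0,t]$, each a finite combination of increments $x_j(t_{i+1})-x_j(t_i)$ with $t_{i+1}\le t$, hence $\mathcal{F}_t$-measurable. Since $\mathcal{F}_t$-measurability passes to $L^2(\Omega)$-limits and to the convergent sum over $j$, the process $W_{\bA}$ is $\mathcal{F}_t$-adapted.

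The step I expect to require the most care is the passage from the scalar isometry to the operator-valued setting, namely interchanging the coordinate sum over $k$ with the double integral against $\partial^2R/\partial u\partial v$, which carries no definite sign. This is exactly where the absolute integrability encoded by the space $|\mathcal{H}|$ (and not merely the seminorm $\|\cdot\|_\mathcal{H}$) is needed: the Cauchy--Schwarz estimate $\sum_k|\langle S(t-u)e_j,e_k\rangle||\langle S(t-v)e_j,e_k\rangle|\le M^2$ together with (\ref{h1}) supplies the dominating function that legitimizes Fubini and the rearrangement, and guarantees the absolute, hence unconditional, convergence of all the series involved.
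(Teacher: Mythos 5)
Your proposal is correct and follows essentially the same route as the paper: both read $W_{\bA}(t)$ as the infinite-dimensional Wiener integral of $s\mapsto S(t-s)\mathbf{1}_{[0,t]}(s)$, invoke the trace-class second-moment bound $\mathbb{E}\|W_{\bA}(t)\|_{\bX}^2 \le \mathrm{Tr}(Q)\,M^2 \int_0^t\int_0^t \bigl|\tfrac{\partial^2 R}{\partial u\partial v}\bigr|\,{\rm d}u\,{\rm d}v$ obtained from the semigroup bound of Assumption \ref{Assumption A} together with (\ref{h1}), and note the bound is uniform in $t$. The only difference is that you spell out steps the paper treats as immediate (membership of the integrand in $|\mathcal{H}|$, the Fubini interchange behind the coordinatewise isometry, and the step-function approximation giving adaptedness, which the paper dismisses as obvious), which is a matter of detail rather than of method.
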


\begin{proof}
  We have that, by using the exponential stability of the semigroup
  $S(t)$ (see Assumption \ref{Assumption A})
  \begin{align*}
    \mathbb{E} & \left\| \int_{0}^{t} S(t-s) \, {\rm d}X(s)\right\|    _{\bX}^{2}
    \\
    &\leq Tr(Q) \int_{0}^{t} \int_{0} ^{t}\Vert S(t-u) \Vert _{L(\bX)}
    \Vert S(t-v) \Vert _{L(\bX)} \left| \frac{\partial ^{2}
        R}{\partial u\partial v}(u,v )\right| \, {\rm d}u \, {\rm d}v
    \\
    &\leq M^2 \, Tr(Q) \int_{0}^{T} \int_{0}^{T} e^{-\omega (t-u)}
    e^{-\omega (t-v)} \left| \frac{\partial ^{2} R}{\partial
        u\partial v}(u,v )\right| \, {\rm d}u \, {\rm d}v <\infty.
  \end{align*}
  The fact that  $W_{\bA}$ is adapted is obvious.
\end{proof}

\vskip0.3cm

The next step is to study the regularity (temporal and spatial) of the
stochastic convolution process. This will lead to a study of an
infinite sum of random variables with independent but not necessarily
Gaussian summands (they are elements in a fixed order Wiener chaos).   Let us recall the following result from
\cite[Theorem 3.5.1, page 76 and Theorem 2.2.1, page 32]{KwWo}.

\begin{prop}
  \label{lp}
  Let $\bX$ be a Hilbert space.
  \begin{enumerate}
  \item[a)] Let $p>4$ and $X_{1},\ldots , X_{n}$ be zero mean,
    independent $\bX$ valued random variables. Then
    \begin{multline*}
      \left( \bE \left\| \sum_{i=1}^{n} X_{i} \right\| _{\bX} ^{p}
      \right) ^{\frac{1}{p} }\leq c_{p} \left[ \left( \mathbb{E}\left\|
            \sum_{i=1}^{n} X_{i} \right\| _{\bX} ^{2} \right)
        ^{\frac{1}{2}} \right.
      \\
      \left. +\left( \mathbb{E}\left\| X_{n} \right\|_{\bX} ^{p} \vee \left(
            \mathbb{E}\left\| X_{n-1} \right\| _{\bX} ^{p} \vee \left( \ldots
              \vee \mathbb{E}\left\| X_{1} \right\| _{\bX} ^{p} \right) \right)
        \right) ^{\frac{1}{p}}\right].
    \end{multline*}
  \item[b)] Let $p>0$ and $X_{1}, X_{2}, \ldots , X_{n}, \dots$ be a
    sequence of independent $\bX$ valued random variables. If the
    series $\sum\limits_{i\geq 1} X_{i}$ converges almost surely to a
    random variable $S$ and for some $t>0$
    \begin{equation}
      \label{c100}
      \sum_{i\geq 1} \bE \left\| X_{i} \right\| _{\bX}^{p}
      1_{(\left\|X_{i}\right\| _{\bX} >t)} < \infty
    \end{equation}
    then $\bE \left\| S\right\| _{\bX}^{p} <\infty$ and $\displaystyle
    \bE \left\|S_{n} - S \right\| _{\bX}^{p} \stackrel{n\to
      \infty}{\longrightarrow} 0$ where $S_{n} = \sum\limits_{i=1}^{n}
    X_{i}$.
  \end{enumerate}
\end{prop}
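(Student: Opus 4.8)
The plan is to derive both parts from a single workhorse, the Hoffmann--J\o rgensen maximal inequality for sums of independent symmetric $\bX$-valued random variables. If $Y_{1},\ldots,Y_{n}$ are independent and symmetric with partial sums $T_{k}=\sum_{i\leq k}Y_{i}$, this inequality asserts that for all $s,t>0$
\begin{equation*}
  \Pr\Bigl(\max_{k\leq n}\|T_{k}\|_{\bX}>2s+t\Bigr)\leq
  4\,\Pr\Bigl(\max_{k\leq n}\|T_{k}\|_{\bX}>s\Bigr)^{2}
  +\Pr\Bigl(\max_{i\leq n}\|Y_{i}\|_{\bX}>t\Bigr).
\end{equation*}
Integrating this tail estimate against $p\lambda^{p-1}\,{\rm d}\lambda$ and using L\'evy's symmetrization inequality to pass from $\max_{k}\|T_{k}\|_{\bX}$ to $\|T_{n}\|_{\bX}$ yields, for every $q<p$, a moment inequality of the form $\|T_{n}\|_{L^{p}}\leq c_{p,q}\bigl(\|T_{n}\|_{L^{q}}+\|\max_{i}\|Y_{i}\|_{\bX}\|_{L^{p}}\bigr)$. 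This maximal inequality is the only genuinely probabilistic input; the rest is symmetrization and the geometry of the Hilbert space $\bX$.

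For part (a) I would first symmetrize. Introducing an independent copy $(X_{i}')$ and setting $\widetilde{X}_{i}=X_{i}-X_{i}'$, Jensen's inequality together with $\bE S_{n}=0$ gives $\bE\|S_{n}\|_{\bX}^{p}\leq\bE\|\widetilde{S}_{n}\|_{\bX}^{p}$, while $\bE\|\widetilde{S}_{n}\|_{\bX}^{p}\leq 2^{p}\,\bE\|S_{n}\|_{\bX}^{p}$ and $\bE\|\widetilde{X}_{i}\|_{\bX}^{2}=2\,\bE\|X_{i}\|_{\bX}^{2}$ let me pass between the original and symmetrized sums at the cost of constants depending only on $p$. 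I then apply the moment form above to the symmetric $\widetilde{X}_{i}$ with reference exponent $q=2$, which is admissible since $p>4>2$; this bounds $\|\widetilde{S}_{n}\|_{L^{p}}$ by $\|\widetilde{S}_{n}\|_{L^{2}}$ plus the $L^{p}$-norm of $\max_{i}\|\widetilde{X}_{i}\|_{\bX}$. Because $\bX$ is a Hilbert space, orthogonality of the independent mean-zero summands gives the exact identity $\bE\|\widetilde{S}_{n}\|_{\bX}^{2}=\sum_{i}\bE\|\widetilde{X}_{i}\|_{\bX}^{2}=2\,\bE\|S_{n}\|_{\bX}^{2}$, so the reference-moment term is precisely the first term in the asserted bound. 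Desymmetrizing the maximal term then produces the claimed inequality.

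For part (b) I would use the almost sure convergence of $S_{n}$ to $S$ to fix the candidate limit and prove $L^{p}$ convergence by controlling the tails $R_{n}=S-S_{n}=\sum_{i>n}X_{i}$. The hypothesis (\ref{c100}) is exactly what is needed: a direct estimate shows that $\sum_{i}\bE\|X_{i}\|_{\bX}^{p}1_{(\|X_{i}\|_{\bX}>t)}<\infty$ forces $\bE\,(\sup_{i}\|X_{i}\|_{\bX})^{p}<\infty$, so the large-jump contribution in the maximal inequality is summable, while the truncated small parts are controlled, after symmetrization, by a Hilbert-space second-moment estimate. Concretely I would first obtain $\bE\|S\|_{\bX}^{p}<\infty$ by applying the integrated maximal inequality to $\max_{k\leq n}\|S_{k}\|_{\bX}$ and letting $n\to\infty$ via Fatou's lemma. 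Then I would apply the same inequality to the tail $R_{n}$: its maximal term $\bE\,(\sup_{i>n}\|X_{i}\|_{\bX})^{p}\to 0$ by dominated convergence under (\ref{c100}), and its lower-order reference moment $\bE\|R_{n}\|_{\bX}^{q}\to 0$ for $q<p$ follows from the almost sure convergence $R_{n}\to 0$ together with the uniform integrability supplied by the uniform bound $\sup_{n}\bE\|S_{n}\|_{\bX}^{p}<\infty$; hence $\bE\|R_{n}\|_{\bX}^{p}\to 0$, which is the desired $L^{p}$ convergence.

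The step I expect to be the main obstacle is the truncation bookkeeping in part (b): one must split each $X_{i}$ at a level $t$, control the bounded and unbounded pieces separately, and check that (\ref{c100}) is strong enough to force both $\bE\|S\|_{\bX}^{p}<\infty$ and the vanishing of the tail moments, uniformly in the truncation level (the absence of a centering hypothesis here makes the symmetrization step more delicate, since one must work with medians rather than means). The Hilbert structure is what keeps this manageable, since it reduces every second-moment computation to a sum of individual variances and lets the Hoffmann--J\o rgensen reference exponent be taken at $q=2$; indeed, the restriction $p>4$ in part (a) is precisely what leaves room to run the argument with this quadratic reference moment.
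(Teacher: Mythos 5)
First, a structural point of comparison: the paper does not prove Proposition \ref{lp} at all; it is quoted as a known result from \cite[Theorem 3.5.1 and Theorem 2.2.1]{KwWo}, so there is no internal argument to measure your proposal against. Your Hoffmann--J\o rgensen route is the standard proof of both parts and is, in substance, how such results are obtained in that reference. Your outline of part b) is sound: condition (\ref{c100}) gives $\bE\sup_i\|X_i\|_{\bX}^p\le t^p+\sum_i\bE\|X_i\|_{\bX}^p\,1_{(\|X_i\|_{\bX}>t)}<\infty$; the tail form of the Hoffmann--J\o rgensen inequality for $\max_{k\le n}\|S_k\|_{\bX}$ (which, as you correctly flag, must be used in its version for arbitrary independent summands, since no centering or symmetry is assumed here), integrated against $p\lambda^{p-1}\,{\rm d}\lambda$ and combined with Fatou, gives $\bE\|S\|_{\bX}^p<\infty$ and $\sup_n\bE\|S_n\|_{\bX}^p<\infty$; and the tails $R_n=S-S_n$ go to zero in $L^p$ because $\bE\sup_{i>n}\|X_i\|_{\bX}^p\to0$ by dominated convergence while the lower reference moment vanishes by uniform integrability.

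In part a), however, there is a genuine mismatch that you pass over when you say desymmetrization ``produces the claimed inequality.'' Your argument yields the bound with $\left(\bE\max_{i\le n}\|X_i\|_{\bX}^p\right)^{1/p}$, the expectation of the maximum, whereas the statement as printed has $\left(\max_{i\le n}\bE\|X_i\|_{\bX}^p\right)^{1/p}$, the maximum of the expectations (each $\bE$ in the display binds a single $\|X_i\|^p$). The printed form is strictly stronger, and it is in fact false: take $\bX=\bR$ and $X_1,\dots,X_n$ i.i.d.\ with $\Pr(X_i=\pm1)=\tfrac12 n^{-2}$ and $X_i=0$ otherwise; then
\begin{equation*}
  \bE|S_n|^p\ \ge\ \Pr\bigl(\text{exactly one summand is nonzero}\bigr)\ \sim\ n^{-1},
\end{equation*}
so the left-hand side is at least of order $n^{-1/p}$, while the right-hand side is of order $n^{-1/2}+n^{-2/p}=o(n^{-1/p})$ since $p>2$. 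No proof can close that gap; what your argument actually proves is the correct form of the theorem, with the maximum inside the expectation (the statement in \cite{KwWo} has this feature, possibly in the iterated form $\bE(\|X_n\|^p\vee\bE(\|X_{n-1}\|^p\vee\cdots))$, which sits between the two readings). This is a defect of the paper's transcription rather than of your method, and it is harmless downstream: the only use of part a) in the paper is Remark \ref{rnou}, whose right-hand side $\left(\sum_{i}\bE\|X_i\|_{\bX}^p\right)^{1/p}$ dominates $\left(\bE\max_i\|X_i\|_{\bX}^p\right)^{1/p}$ as well. A last quibble: the restriction $p>4$ is not what ``leaves room'' for the quadratic reference moment --- the Hoffmann--J\o rgensen moment inequality holds for every reference exponent $0<q<p$ --- it is simply part of the formulation being quoted.
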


\begin{rem}
  \label{rnou}
  It is not difficult to see that the point a) above implies that
  \begin{equation}
    \label{nou}
    \left( \bE \left\| \sum_{i=1}^{n} X_{i} \right\| _{\bX} ^{p}
    \right) ^{\frac{1}{p} }\leq c_{p} \left[ \left( \mathbb{E}\left\|
          \sum_{i=1}^{n} X_{i} \right\| _{\bX} ^{2} \right)
      ^{\frac{1}{2}} + \left( \sum_{i=1}^{n} \mathbb{E}\left\|
          X_{i}\right\| ^{p}\right) ^{\frac{1}{p}} \right].
  \end{equation}
\end{rem}

The following lemma is the main tool to get the regularity of the
stochastic convolution process $W_{\bA}$.

\begin{lem}
  \label{lem:4.5}
  Let $(X_{t}) _{t\in [0,T]} $ a stochastic process whose covariance
  $R$ satisfies Assumption \ref{Assumption 1}.  Denote, for every
  $\alpha \in (0,1)$,
  \begin{equation*}
    Y_{\alpha }(t)= \int_{0}^{t} (t-u) ^{-\alpha } S(t-u) \, {\rm
      d}X_{u}, \qquad t\in [0,T].
  \end{equation*}
  Then for every $\alpha \in (0,H)$, $Y_{\alpha}$ belongs to
  $L^{p}\left([0,T];\bX \right)$.
\end{lem}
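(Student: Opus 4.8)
The plan is to prove the quantitative statement $\int_0^T \bE\|Y_\alpha(t)\|_{\bX}^p\,{\rm d}t<\infty$ for $p>4$; by Fubini this yields $Y_\alpha(\cdot,\omega)\in L^p([0,T];\bX)$ for almost every $\omega$, and the cases $p\leq 4$ follow since $[0,T]\times\Omega$ is a finite measure space. (The range $p>4$ is both what the later factorization argument behind Theorem \ref{thm:stoch_conv} needs and what makes Proposition \ref{lp} applicable.) I first fix $t$ and expand $Y_\alpha(t)$ along the eigenbasis,
\begin{equation*}
  Y_\alpha(t)=\sum_{j\geq 1}\sqrt{\lambda_j}\,Z_j(t),\qquad Z_j(t)=\int_0^t (t-u)^{-\alpha}S(t-u)e_j\,{\rm d}x_j(u),
\end{equation*}
so that the summands are independent $\bX$-valued random variables (independence is inherited from that of the copies $x_j$) and, by Remark \ref{rem:4.3} read off coordinatewise in the basis $(e_k)$, each $Z_j(t)$ lies in the fixed Wiener chaos of $x$. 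Applying the Rosenthal-type inequality of Remark \ref{rnou} then splits the $p$-th moment into a second-moment term for the whole series and the sum $\sum_{j}\lambda_j^{p/2}\,\bE\|Z_j(t)\|_{\bX}^p$.

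The core computation is the second moment. Using the bound for the $\bX$-valued Wiener integral, the exponential stability $\|S(t-u)\|_{L(\bX)}\leq M$ from Assumption \ref{Assumption A}, and the covariance bound of Assumption \ref{Assumption 1}, I would dominate
\begin{equation*}
  \bE\|Y_\alpha(t)\|_{\bX}^2\leq M^2\,Tr(Q)\int_0^t\!\!\int_0^t (t-u)^{-\alpha}(t-v)^{-\alpha}\Big(c_1|u-v|^{2H-2}+c_2(uv)^{\beta}\Big)\,{\rm d}u\,{\rm d}v.
\end{equation*}
Both resulting integrals are homogeneous, so the substitution $u=t\xi$, $v=t\eta$ pulls out the $t$-dependence and leaves pure constants: the diagonal term equals $C_1\,t^{2H-2\alpha}$ with $C_1=\int_0^1\!\int_0^1(1-\xi)^{-\alpha}(1-\eta)^{-\alpha}|\xi-\eta|^{2H-2}\,{\rm d}\xi\,{\rm d}\eta$, while the $g$-term factorizes as $C_2\,t^{2-2\alpha+2\beta}$ with $C_2=B(1-\alpha,1+\beta)^2$. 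This is exactly where the hypotheses enter: $C_2$ is finite because $\alpha<1$ and $\beta>-1$, and $C_1$ is finite precisely when $2H-2\alpha>0$, i.e. for $\alpha<H$, since near the corner $(1,1)$ the integrand scales like $\rho^{2H-1-2\alpha}$ in the radial variable and is integrable iff $\alpha<H$.

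For the second piece I would use that each $Z_j(t)$ lies in a fixed Wiener chaos, so the Hilbert-valued version of the hypercontractivity estimate (\ref{hyper}) gives $\bE\|Z_j(t)\|_{\bX}^p\leq c_p(\bE\|Z_j(t)\|_{\bX}^2)^{p/2}$; the same scaling computation (now with $\|S(t-u)e_j\|_{\bX}\leq M$ in place of $Tr(Q)$) bounds $\bE\|Z_j(t)\|_{\bX}^2$ uniformly in $j$ by $M^2\big(c_1C_1t^{2H-2\alpha}+c_2C_2t^{2-2\alpha+2\beta}\big)$, and $\sum_j\lambda_j^{p/2}<\infty$ since the $\lambda_j$ are summable and eventually at most $1$. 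Collecting the two pieces gives
\begin{equation*}
  \bE\|Y_\alpha(t)\|_{\bX}^p\leq c\,\big(t^{2H-2\alpha}+t^{2-2\alpha+2\beta}\big)^{p/2},
\end{equation*}
and integrating in $t$ over $[0,T]$ concludes. The step I expect to be the real obstacle is this last integrability: the diagonal exponent $2H-2\alpha$ is positive and harmless, but the $g$-contribution carries exponent $(2-2\alpha+2\beta)\frac{p}{2}$, which can be negative, so one must confirm the singularity at $t=0$ stays integrable in the admissible parameter range---for the bifractional Brownian motion, where $\beta=HK-1$ with $2HK>1$, one checks $2-2\alpha+2\beta=2HK-2\alpha>1-2H>-1$. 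A secondary point requiring care is the Hilbert-valued hypercontractivity, as (\ref{hyper}) is stated only for scalar multiple integrals; it follows by applying that estimate to each coordinate $\langle Z_j(t),e_k\rangle$ and summing.
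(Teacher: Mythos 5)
Your proposal follows the same route as the paper's own proof: expansion of $Y_\alpha(t)$ along the eigenbasis, independence of the copies $x_j$, the Rosenthal-type inequality of Remark \ref{rnou}, hypercontractivity (\ref{hyper}) for the chaos-valued summands, and a second-moment estimate obtained by splitting $|\partial^2R/\partial u\partial v|$ into the $|u-v|^{2H-2}$ and $(uv)^\beta$ contributions of Assumption \ref{Assumption 1}. In places you are tidier than the paper: you treat the Gaussian and non-Gaussian cases in one stroke, the scaling $u=t\xi$, $v=t\eta$ makes the $t$-dependence of the bounds explicit, and you are more careful about the Hilbert-valued form of (\ref{hyper}) (your coordinatewise reduction does work, provided "summing" is implemented as Minkowski's inequality in $L^{p/2}$ applied to $\|Z_j(t)\|^2=\sum_k\langle Z_j(t),e_k\rangle^2$). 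One step you elide that the paper carries out: Remark \ref{rnou} is a statement about finite sums, so the bound must be transferred to the full series via Proposition \ref{lp}(b) (or Fatou along an a.s.\ convergent subsequence of partial sums).

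The genuine defect is exactly at the step you flag, and your proposed resolution does not close it. Near $t=0$ your bound contains the term $t^{p(1-\alpha+\beta)}$, so integrability on $[0,T]$ requires $p(1-\alpha+\beta)>-1$; checking $2-2\alpha+2\beta>-1$, as you do, settles only $p=2$. If $\alpha>1+\beta$ --- which Assumption \ref{Assumption 1} as literally stated permits, since $H$ and $\beta$ are unrelated parameters --- then for $p>1/(\alpha-1-\beta)$ your final integral diverges and the argument stops. Moreover your bifractional verification rests on a misreading of the parameters: for $B^{H,K}$ with $K<1$ the diagonal singularity of $\partial^2R$ is $|u-v|^{2HK-2}$, which is \emph{worse} than $|u-v|^{2H-2}$, so Assumption \ref{Assumption 1} holds for $B^{H,K}$ only with the assumption's Hurst-type parameter equal to $HK$ and $\beta=HK-1$; then the admissible range is $\alpha<HK=1+\beta$, the exponent $2(1-\alpha+\beta)$ is positive, and no obstacle arises at all --- whereas with your range $\alpha<H$ the check is insufficient. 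To be fair, the paper's proof hides the same issue: its claim that $I_2\le\bigl(\int_0^t(t-u)^{-\alpha}e^{-\omega_1(t-u)}u^\beta\,{\rm d}u\bigr)^2$ is "bounded by a constant" holds precisely when $\alpha\le 1+\beta$, since that integral is of order $t^{1-\alpha+\beta}$; the restriction $\alpha<H\wedge(\beta+1)$ surfaces explicitly only in the later H\"older-continuity proposition. Under that restriction (or when $g\equiv 0$, as for fBm and the Hermite process) both your argument and the paper's give $\sup_{t\le T}\bE\|Y_\alpha(t)\|_{\bX}^p<\infty$ and the lemma follows; without it, neither proof establishes the statement for all $p$.
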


\begin{proof}
  Suppose first that $X$ is Gaussian. Then, in order to show that
  $Y_{\alpha}$ is in $L^{p}\left([0,T]; \bX\right)$ it suffices to proves
  that it is in $L ^{2}\left( [0,T]; \bX\right)$. We have
  \begin{align*}
    \bE \left\| Y_{\alpha } (t) \right\| _{\bX}^{2} =& \bE \left\|
      \sum_{j\geq 1} \sqrt{\lambda _{j}} \int_{0}^{t} (t-u) ^{-\alpha
      } S(t-u)e_{j} \, {\rm d}x_{j} \right\|_{\bX}^{2}
    \\
    \leq& C\, \sum_{j\geq 1} \lambda _{j} \int_{0}^{t} \int_{0}^{t}
    (t-u) ^{-\alpha} (t-v)^{-\alpha} \left\| S(t-u) e_{j}
    \right\|_{\bX} \left\| S(t-v)e_{j} \right\|_{\bX}
    \\
    & \phantom{C\, \sum_{j\geq 1} \lambda _{j} \int_{0}^{t}
      \int_{0}^{t} (t-u) ^{-\alpha} (t-v)^{-\alpha} e^{-\omega _{1}
        (t-u) } e^{-\omega _{1} (t-v) }} \left| \frac{\partial
        ^{2}R}{\partial u \partial v} (u,v)\right| \, {\rm d}u \, {\rm
      d}v
    \\
    \leq& C \, ({\rm Tr} Q) \, \int_{0}^{t} \int_{0}^{t}
    (t-u)^{-\alpha} (t-v)^{-\alpha} e^{-\omega _{1} (t-u) } e^{-\omega
      _{1} (t-v) }\left| \frac{\partial ^{2}R}{\partial u \partial v}
      (u,v)\right| \, {\rm d}u \, {\rm d}v
    \\
    \leq& C ({\rm Tr} Q) \int_{0}^{t} \int_{0}^{t} (t-u)^{-\alpha}
    (t-v)^{-\alpha} e^{-\omega _{1} (t-u)} e^{-\omega _{1} (t-v)
    }\vert u-v \vert ^{2H-2} \, {\rm d}u \, {\rm d}v
    \\
    &+ C ({\rm Tr} Q) \int_{0}^{t} \int_{0}^{t} (t-u) ^{-\alpha }
    (t-v) ^{-\alpha } e^{-\omega _{1} (t-u) } e^{-\omega _{1} (t-v)
    }(uv) ^{\beta} \, {\rm d}u \, {\rm d}v
    \\
    :=& I_{1} + I_{2}.
  \end{align*}
  Concerning the term $I_{1}$, we can write
  \begin{multline*}
    I_{1}\leq 2C ({\rm Tr} Q)\int_{0}^{t} \int_{0}^{u} (t-u) ^{-\alpha
    } (t-v) ^{-\alpha } \vert u-v \vert ^{2H-2} \, {\rm d}v \, {\rm
      d}u
    \\
    \leq C ({\rm Tr} Q) \int_{0}^{t} u^{-2\alpha } u^{2H-1}
    \int_{0}^{1} z^{-\alpha } (1-z)^{2H-2} \, {\rm d}z \, {\rm d}u =
    C ({\rm Tr} Q) \, \int_{0}^{t} u^{-2\alpha } u^{2H-1} \, {\rm d}u
  \end{multline*}
  where we used the change of variable $\frac{v}{u}=z$. The last
  quantity is clearly finite if and only if $\alpha <H$. Concerning
  $I_{2}$ we have
  \begin{equation*}
    I_ {2} \leq \left( \int_{0}^{t} (t-u)^{-\alpha } e^{-\omega _{1}
        (t-u)} u^{\beta} \, {\rm d}u \right)^{2}
  \end{equation*}
  and this is always bounded by a constant (depending only on $T$)
  using the hypothesis imposed on $\alpha$ and $\beta$. We obtain thus
  the bound
  \begin{equation*}
    \mathbb{E}\left\| Y_{\alpha} (t) \right\| _{\bX}^{2}\leq C=C_{T}
  \end{equation*}
  for every $t\in [0,T]$.

  \vskip0.2cm

  Let us assume now that $X$ is not Gaussian and it belongs to the
  $k$-th Wiener chaos with $k\geq 2$. The process $X$ can be written as
  \begin{equation*}
    X_{t}= \sum_{j\geq 1} \sqrt{\lambda _{j} } e_{j} x_{j}(t)
  \end{equation*}
  where $x_{j}$ is an element of the $k$ th Wiener chaos with respect
  to the Wiener process $w_{j}$ and $(w_{j})_{j\geq 1}$ are
  independent real one-dimensional Wiener processes. Then
  \begin{equation*}
    Y_{\alpha }(t)= \sum_{j\geq 1} \sqrt{\lambda _{j} }  \int_{0}^{t}
    (t-u) ^{-\alpha } S(t-u)e_{j} \, {\rm d}x_{j}
  \end{equation*}
  is also an element in the $k$ th Wiener chaos (in the sense that
  every summand is in the $k$ th Wiener chaos with respect to
  $w_{j}$). Note also that, using the above computations from the
  Gaussian case we obtain
  \begin{equation*}
    \mathbb{E}\left\| Y_{\alpha }(t) \right\| ^{2} _{\bX} \leq C=C_{T}
  \end{equation*}
  for every $t \in [0,T]$. Denote by
  \begin{align*}
    S_{n, Y_{\alpha }} (t)= \sum_{j=1} ^{n} \sqrt{\lambda _{j} } e_{j}
    \int_{0}^{t} (t-u) ^{-\alpha } S(t-u) \, {\rm d}x_{j} :=
    \sum_{j=1}^{n} \sqrt{\lambda _{j}}A_{j}.
  \end{align*}
  By Proposition \ref{lp}, point a) and Remark \ref{rnou} we have
  \begin{align*}
    \left( \mathbb{E} \left\| S_{n, Y_{\alpha }} \right \| ^{p}
    \right) ^{\frac{1}{p}} \leq \left( \mathbb{E} \left\|
        \sum_{j=1}^{n} \sqrt{\lambda _{j}}A_{j}\right\| ^{2} _{\bX}
    \right) ^{\frac{1}{2}} + \left( \mathbb{E} \left\| \sqrt{\lambda
          _{n}}A_{n} \right\| _{\bX} ^{p} + \dots + \mathbb{E}\left\|
        \sqrt{ \lambda _{1} }A_{1} \right\| _{\bX} ^{p} \right)
    ^{\frac{1}{p}}.
  \end{align*}
  Using the hypercontractivity property of multiple stochastic
  integrals (\ref{hyper}), we get for every $i=1,.., n$
  \begin{multline*}
    \mathbb{E}\left\| \sqrt{\lambda _{i}} A_{i} \right\| _{\bX}^{p} =
    \lambda _{i} ^{\frac{p}{2}}\mathbb{E}\left| \int_{0}^{t} (t-u)
      ^{-\alpha } S(t-u) \, {\rm d}x_{j}\right| ^{p}
    \\
    \leq c_{p}\lambda _{i} ^{\frac{p}{2}} \left( \mathbb{E}\left|
        \int_{0}^{t} (t-u) ^{-\alpha } S(t-u) \, {\rm d}x_{j}
      \right|^{2} \right)^{\frac{p}{2}} \leq c_{p,T}
    \lambda_{i}^{\frac{p}{2}}.
  \end{multline*}
  As a consequence, since $0<\lambda _{i} \downarrow 0$
  \begin{align}
    \label{ineq1}
    \bE \left\| S_{n, Y_{\alpha }} (t)\right \|_{\bX}^{p} \leq c_{p,T}
    \left( \left( \sum_{i=1}^{n} \lambda _{i} \right)^{\frac{1}{2}} +
      \left( \sum_{i=1}^{n} \lambda _{i} ^{\frac{p}{2}}
      \right)^{\frac{1}{p}} \right)\leq c_{p,T}.
  \end{align}
  Now, since for every $t$ the sequence $S_{n, Y_{\alpha } }$ is
  convergent in $L^{2}(\Omega; \bX) $ as $n\to \infty$ we can find a
  sequence which converges almost surely. This subsequence will be
  again denoted by $S_{n, Y_{\alpha}}$. By Proposition \ref{lp} point
  b), since
  \begin{align*}
    \sum_{i\geq 1} \mathbb{E} \left\| \sqrt{ \lambda _{i} }A_{i}
    \right\| _{\bX} ^{p} 1_{( \sqrt{\lambda _{i} }A_{i} >t)} \leq \sum
    _{i\geq 1} \lambda _{i} ^{\frac{p}{2}} \mathbb{E}\left\| A_{i}
    \right\| _{\bX} ^{p} \leq c_{p,T}
\end{align*}
we obtain that for every $t$ the random variable $Y_{\alpha }(t)$ belongs to $L^{p} ([0,T] ; \bX)$ and \\
$\mathbb{E}\left\| S_{n, Y_{\alpha }}(t)- Y_{\alpha }(t) \right\| _{\bX} ^{p} \to _{n\to \infty }0$. Letting now $n \to \infty $ in (\ref{ineq1}) we obtain that
\begin{equation*}
\mathbb{E}\left\| Y_{\alpha }(t) \right\| _{\bX}^{p} \leq c_{p,T} .
\end{equation*}
and this finishes the proof.
\end{proof}

\begin{prop}
\label{pr:4.6}
  Suppose that $X$ satisfies Assumption \ref{Assumption 1} and fix
  $\alpha \in (0,H)$. Let $W_{\bA}$ be given by (\ref{conv}). Then for
  every $\gamma <\alpha $ and $\varepsilon < \alpha -\gamma $ it holds
  that
  \begin{equation*}
    W_{\bA} \in C^{\alpha -\gamma -\varepsilon} \left([0,T]; D((-\bA)^{\gamma})\right).
  \end{equation*}
  In particular for any fixed $t\in [0,T]$ the random variable
  $W_{\bA}(t)$ belongs to $D((-\bA)^{\gamma})$.
\end{prop}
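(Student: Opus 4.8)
The plan is to reduce the statement to a deterministic regularity property of an analytic-semigroup convolution via the factorization method of Da Prato and Zabczyk, using Lemma~\ref{lem:4.5} as the sole source of integrability. The starting point is the Beta-integral identity
\[
\int_u^t (t-\sigma)^{\alpha-1}(\sigma-u)^{-\alpha}\,{\rm d}\sigma = \frac{\pi}{\sin(\pi\alpha)}, \qquad 0\le u<t,
\]
valid for $\alpha\in(0,1)$. Inserting the semigroup identity $S(t-u)=S(t-\sigma)S(\sigma-u)$ and interchanging the deterministic and stochastic integrations, I would rewrite the convolution as
\[
W_{\bA}(t) = \frac{\sin(\pi\alpha)}{\pi}\int_0^t (t-\sigma)^{\alpha-1}S(t-\sigma)\,Y_{\alpha}(\sigma)\,{\rm d}\sigma,
\]
with $Y_\alpha$ exactly the process of Lemma~\ref{lem:4.5}. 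Thus $W_{\bA}$ becomes a pathwise convolution of the analytic semigroup against $Y_\alpha$, and all stochastic information is now encoded in the integrability of $Y_\alpha$.

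From Lemma~\ref{lem:4.5} we have $\bE\|Y_\alpha(t)\|_{\bX}^p\le c_{p,T}$ uniformly in $t\in[0,T]$ for every $p$; integrating in $t$ gives $Y_\alpha\in L^p(\Omega\times[0,T];\bX)$, so for almost every $\omega$ the trajectory $Y_\alpha(\omega,\cdot)$ belongs to $L^p([0,T];\bX)$ and the argument may proceed pathwise. Fixing $\gamma<\alpha$ and $\varepsilon<\alpha-\gamma$, I would choose $p$ so large that $1/p\le\varepsilon$; since $\varepsilon<\alpha-\gamma$, this also ensures $\gamma<\alpha-1/p$.

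For the spatial regularity I would apply $(-\bA)^\gamma$ under the integral and use the analytic-semigroup smoothing estimate $\|(-\bA)^\gamma S(\tau)\|_{L(\bX)}\le C\tau^{-\gamma}e^{-\omega\tau}$ to obtain
\[
\|(-\bA)^\gamma W_{\bA}(t)\|_{\bX}\le C\int_0^t (t-\sigma)^{\alpha-1-\gamma}\|Y_\alpha(\sigma)\|_{\bX}\,{\rm d}\sigma,
\]
which is finite by H\"older's inequality precisely because $(\alpha-1-\gamma)q>-1$ for the conjugate exponent $q$ when $1/p<\alpha-\gamma$; this already gives $W_{\bA}(t)\in D((-\bA)^\gamma)$. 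For the temporal regularity I would invoke the standard mapping property of the factorization operator $(R_\alpha f)(t)=\int_0^t(t-\sigma)^{\alpha-1}S(t-\sigma)f(\sigma)\,{\rm d}\sigma$ (compare \cite[Section~5.3]{dpz:Ergodicity}): for $0\le\gamma<\alpha-1/p$ it sends $L^p([0,T];\bX)$ continuously into $C^{\alpha-1/p-\gamma}([0,T];D((-\bA)^\gamma))$. Concretely, for $t<t'$ one splits the increment $(-\bA)^\gamma W_{\bA}(t')-(-\bA)^\gamma W_{\bA}(t)$ into the piece integrated over $[t,t']$ and the piece coming from the difference of the kernels $(t'-\sigma)^{\alpha-1}(-\bA)^\gamma S(t'-\sigma)-(t-\sigma)^{\alpha-1}(-\bA)^\gamma S(t-\sigma)$ over $[0,t]$, estimating the second via $\|(S(h)-I)(-\bA)^{-\delta}\|_{L(\bX)}\le Ch^{\delta}$. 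Since $\alpha-1/p-\gamma\ge\alpha-\gamma-\varepsilon$, the desired conclusion $W_{\bA}\in C^{\alpha-\gamma-\varepsilon}([0,T];D((-\bA)^\gamma))$ follows.

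The step I expect to be most delicate is the stochastic Fubini theorem underlying the factorization, since here $X$ need not be Gaussian and the Wiener integral is defined only abstractly through the isometry on $\left|\mathcal{H}\right|$. I would first check the interchange for step-function integrands, where it is a finite algebraic identity, and then pass to the limit using the isometry~(\ref{iso18}) and the chaos-stability recorded in Remark~\ref{rem:4.3}; in the non-Gaussian case the interchange can equivalently be transported, via the transfer operator, to an ordinary multiple Wiener-It\^{o} integral with deterministic kernel, where the classical Fubini theorem applies directly. The remaining ingredients, namely the analytic-semigroup smoothing bounds and the continuity of $R_\alpha$, are entirely deterministic and standard.
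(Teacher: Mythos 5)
Your proposal is correct and follows essentially the same route as the paper: the paper's proof also uses the Da Prato--Zabczyk factorization operator $R_{\alpha,\gamma}\psi(t)=\frac{\sin(\alpha\pi)}{\pi}\int_0^t(t-u)^{\alpha-1}(-\bA)^{\gamma}S(t-u)\psi(u)\,{\rm d}u$, its mapping property $L^{p}([0,T];\bX)\to C^{\alpha-\gamma-1/p}([0,T];D((-\bA)^{\gamma}))$ for $\alpha>\gamma+\frac{1}{p}$, and Lemma~\ref{lem:4.5} to conclude. The only difference is one of detail: the paper dismisses the factorization identity $(-\bA)^{\gamma}W_{\bA}(t)=(R_{\alpha,\gamma}Y_{\alpha})(t)$ as ``standard,'' whereas you spell out the Beta-integral computation, the choice of $p$ relative to $\varepsilon$, and the stochastic Fubini justification (via step functions, the isometry, and chaos stability), all of which are valid and fill genuine gaps left implicit in the paper.
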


\begin{proof}
  For $\alpha , \gamma \in (0,1)$, $p>1$ and $\psi \in L^{p} ([0,T] ,
  \bX)$ we define
  \begin{align*}
    R_{\alpha , \gamma } \psi(t) = \frac{ \sin(\alpha \pi )}{\pi }
    \int_{0}^{t} (t-u) ^{\alpha -1} (-A) ^{\gamma } S(t-u) \psi (u) \,
    {\rm d}u.
  \end{align*}
  Then, if $\alpha > \gamma + \frac{1}{p}$ it holds that
  \begin{align*}
    R_{\alpha, \gamma} \in L\left(L^{p}([0,T]; \bX);
      C^{\alpha-\gamma-\frac{1}{p}}([0,T]; D((-\bA)^{\gamma})) \right)
  \end{align*}

  It is standard to see that
  \begin{equation*}
    (-\bA)^{\gamma } X(t)= (R_{\alpha , \gamma} Y_{\alpha })(t)
  \end{equation*}
  where $Y_{\alpha }(t)= \int_{0}^{t} (t-u) ^{-\alpha } S(t-u) \, {\rm
    d}X(u)$. Since by the above lemma $Y_{\alpha }\in
  L^p([0,T];\bX)$ the conclusion follows.
\end{proof}

Next we will regard further properties of the stochastic convolution
process (\ref{conv}).  We are concerned with the $L^{p}$ norm of its supremum and
with its regularity with respect to the time variable. In the Gaussian
case the proofs basically follow the standard ideas from \cite[Chapter
5]{dpz:Stochastic}, while in the non-Gaussian case, the results are
new and they involve an analysis of the $L^{p}$ moments of multiple
Wiener-It\^o integrals.

\begin{lem}
  Assume that $(X_{t}) _{t\in [0,T] }$ satisfies Assumption
  \ref{Assumption 1} and let $W_{\bA}$ be given by (\ref{conv}). For
  any $p>\frac{1}{H}$, we have
  \begin{equation*}
    \bE \sup_{t\in [0,T]} \left\|W_{\bA}(t)\right\| _{\bX}^{p} \leq C.
  \end{equation*}
\end{lem}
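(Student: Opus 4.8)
The plan is to bound the supremum using the factorization method, which reduces the problem to the $L^p$ estimate already established in Lemma \ref{lem:4.5}. The key algebraic identity is the factorization formula: for $\alpha \in (0,1)$, one has the representation
\begin{equation*}
  W_{\bA}(t) = \frac{\sin(\alpha \pi)}{\pi} \int_{0}^{t} (t-u)^{\alpha - 1} S(t-u) Y_{\alpha}(u) \, {\rm d}u,
\end{equation*}
where $Y_{\alpha}(u) = \int_{0}^{u} (u-r)^{-\alpha} S(u-r) \, {\rm d}X_r$ is exactly the auxiliary process analyzed in Lemma \ref{lem:4.5}. This identity follows from the semigroup property of $S$ together with the Beta-integral evaluation $\int_{r}^{t} (t-u)^{\alpha-1}(u-r)^{-\alpha} \, {\rm d}u = \pi / \sin(\alpha \pi)$, which is the standard starting point of the Da Prato--Kwapie\'n--Zabczyk factorization argument.

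Given this representation, I would proceed as follows. First, fix $p > 1/H$ and choose $\alpha \in (0,H)$ such that $\alpha > 1/p$; this is possible precisely because $p > 1/H$ guarantees $1/p < H$, leaving room to pick such an $\alpha$. Second, apply H\"older's inequality in the convolution variable $u$ together with the exponential bound $\|S(t-u)\|_{L(\bX)} \le M e^{-\omega(t-u)}$ to dominate $\sup_{t} \|W_{\bA}(t)\|_{\bX}$ by a constant times $\left( \int_{0}^{T} \|Y_{\alpha}(u)\|_{\bX}^{p} \, {\rm d}u \right)^{1/p}$; the H\"older-conjugate factor $\left( \int_{0}^{t} (t-u)^{(\alpha-1)q} \, {\rm d}u \right)^{1/q}$ is finite and uniformly bounded in $t$ exactly when $\alpha > 1/p$, since then $(\alpha - 1)q > -1$. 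Third, take the expectation and invoke Lemma \ref{lem:4.5}, which gives $\bE \|Y_{\alpha}(u)\|_{\bX}^{p} \le c_{p,T}$ uniformly in $u \in [0,T]$, so that after an application of Fubini the quantity $\bE \int_0^T \|Y_\alpha(u)\|_\bX^p \, {\rm d}u$ is finite. Combining these yields $\bE \sup_{t \in [0,T]} \|W_{\bA}(t)\|_{\bX}^{p} \le C$, as claimed.

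The main obstacle is verifying the factorization identity itself in the present generality, since the integrator $X$ is only assumed to possess a covariance-structure measure and may be non-Gaussian. One must justify interchanging the Wiener integral against ${\rm d}X_r$ with the deterministic $(t-u)^{\alpha-1}$-integral in $u$, i.e.\ a stochastic Fubini theorem for the Wiener integral constructed in Section 2. For the Gaussian case this is classical, but in the non-Gaussian (Hermite) case I would reduce to the transfer-operator representation from Remark \ref{rem:4.3}, expressing both sides as multiple Wiener--It\^o integrals with respect to the underlying Wiener processes $w_j$ and appealing to the ordinary (deterministic) Fubini theorem inside the kernel integral, then to the $L^2(\Omega)$-isometry \eqref{iso18} to identify the two representations. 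The integrability hypothesis \eqref{h1} on $\partial^2 R / \partial u \partial v$ is what makes every intermediate integral absolutely convergent and legitimizes the interchange.
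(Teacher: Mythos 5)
Your proposal is correct and follows essentially the same route as the paper: the Da Prato--Kwapie\'n--Zabczyk factorization of $W_{\bA}$ through the auxiliary process $Y_{\alpha}$ (the paper's $Z_{\alpha}$), H\"older's inequality with $\alpha \in (1/p, H)$, and then the uniform $L^p$ bound on $Y_{\alpha}$ established in Lemma \ref{lem:4.5} via hypercontractivity. Your additional care about justifying the factorization identity through a stochastic Fubini argument (via the transfer operator in the non-Gaussian case) is a point the paper passes over silently, but it does not change the structure of the argument.
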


\begin{proof}
  Note that for $t \in [0,T]$ and $0<\alpha <H$
  \begin{align*}
    W_{\bA}(t)= \frac{ \sin \pi \alpha }{\pi } \int_{0}^{t} S(t-s)
    (t-s)^{\alpha -1} Z_{\alpha }(s) \, {\rm d}s
  \end{align*}
  with $Z_{\alpha }(s) = \int_{0}^{s} S(s-u) (s-u)^{-\alpha} \, {\rm
    d}X_{u}$.  By H\"older's inequality with $p>\frac{1}{\alpha} >
  \frac{1}{H}$
  \begin{align*}
    \bE \sup_{t\in [0,T]} \left\| W_{\bA}(t) \right\| _{\bX}^{p} \leq
    \bE \int_{0}^{T} \left\| Z_{\alpha }(s)\right\| _{\bX}^{p} \, {\rm
      d}s.
  \end{align*}
  Now, since $0<\alpha <H$,
  \begin{align*}
    \bE \left\| Z_{\alpha }(s)\right\| _{\bX}^{2} \leq C \int_{0}^{T}
    e^{-\omega (t-u)} e^{-\omega (t-v)} \left| \frac{\partial
        ^{2}R}{\partial u\partial v} (u,v) \right| \, {\rm d}u \, {\rm
      d}v < C
  \end{align*}
  and by using the computations in the proof of Lemma \ref{lem:4.5}
   and the hypercontractivity property for
  multiple stochastic integrals (\ref{hyper}) we get $\bE \left\| Z_{\alpha}(s)
  \right\|_{bX}^{p} \leq C$.
\end{proof}

\vskip0.3cm

Let us now state our result concerning the regularity of $W_{\bA}$
with respect to the time variable.

\begin{prop}
  Fix $\alpha \in (0,H \wedge (\beta +1))$. Then the process
  $W_{\bA}(\cdot )$ has $\alpha$ H\"older continuous paths.
\end{prop}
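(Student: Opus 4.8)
The plan is to establish the moment estimate $\bE\|W_{\bA}(t)-W_{\bA}(s)\|_{\bX}^p \leq C\,|t-s|^{\theta p}$ for every $\theta < H\wedge(\beta+1)$, every $s,t\in[0,T]$ and all large $p$, and then invoke Kolmogorov's continuity criterion. Once such a bound holds with $\theta p>1$, the process admits a version whose paths are H\"older of every order strictly below $\theta-\frac1p$; choosing $\theta$ close to $H\wedge(\beta+1)$ and $p$ large then covers every order $<H\wedge(\beta+1)$, in particular the fixed exponent $\alpha$. First I would split the increment as
\begin{align*}
  W_{\bA}(t)-W_{\bA}(s) = \int_s^t S(t-r)\,{\rm d}X_r + \int_0^s \bigl(S(t-r)-S(s-r)\bigr)\,{\rm d}X_r =: A_1 + A_2,
\end{align*}
and treat each piece at the level of second moments, the passage to $L^p$ being identical for the two.

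For $A_1$ I would apply the second-moment bound of Section 3 together with Assumption \ref{Assumption 1}, bounding the semigroup norms by $M$ to obtain $\bE\|A_1\|_{\bX}^2 \leq C\int_s^t\int_s^t\bigl(c_1|u-v|^{2H-2}+c_2(uv)^\beta\bigr)\,{\rm d}u\,{\rm d}v$. The fractional kernel integrates to a constant times $|t-s|^{2H}$, while the second summand factors as $\bigl(\int_s^t u^\beta\,{\rm d}u\bigr)^2 = (\beta+1)^{-2}(t^{\beta+1}-s^{\beta+1})^2$; since $\beta+1\in(0,1)$ the map $x\mapsto x^{\beta+1}$ is $(\beta+1)$-H\"older, so this is at most $C|t-s|^{2(\beta+1)}$. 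Hence $\bE\|A_1\|_{\bX}^2 \leq C|t-s|^{2(H\wedge(\beta+1))}$, which is exactly where the exponent $\beta+1$ enters the conclusion.

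For $A_2$ I would exploit analyticity through the standard estimate $\|S(t-r)-S(s-r)\|_{L(\bX)} = \|(S(t-s)-I)S(s-r)\|_{L(\bX)} \leq C(t-s)^\theta (s-r)^{-\theta}$, valid for any $\theta\in(0,1)$. Pulling out the factor $(t-s)^{2\theta}$ leaves
\begin{align*}
  \bE\|A_2\|_{\bX}^2 \leq C(t-s)^{2\theta}\int_0^s\int_0^s (s-u)^{-\theta}(s-v)^{-\theta}\bigl(c_1|u-v|^{2H-2}+c_2(uv)^\beta\bigr)\,{\rm d}u\,{\rm d}v.
\end{align*}
The first summand is treated exactly as the term $I_1$ in Lemma \ref{lem:4.5} and is finite, uniformly in $s$, precisely when $\theta<H$. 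The second summand factors into $\bigl(\int_0^s (s-u)^{-\theta}u^\beta\,{\rm d}u\bigr)^2 = s^{2(1-\theta+\beta)}B(1-\theta,\beta+1)^2$, which remains bounded on $[0,T]$ exactly when $\theta<\beta+1$. Thus any $\theta<H\wedge(\beta+1)$ is admissible and yields $\bE\|A_2\|_{\bX}^2\leq C|t-s|^{2\theta}$. Controlling this term at the origin, and extracting the sharp constraint $\theta<\beta+1$ from the boundary behaviour of the inner integral, is the main obstacle.

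Combining the two pieces gives $\bE\|W_{\bA}(t)-W_{\bA}(s)\|_{\bX}^2 \leq C|t-s|^{2\theta}$ for every $\theta<H\wedge(\beta+1)$. To upgrade this to $L^p$ I would repeat the chaos argument of Lemma \ref{lem:4.5}: writing the increment as $\sum_j\sqrt{\lambda_j}\,d_j$ with each $d_j$ in the fixed $k$th Wiener chaos, Proposition \ref{lp}(a) together with Remark \ref{rnou} bounds its $p$th moment by the second moment plus $\bigl(\sum_j\bE\|\sqrt{\lambda_j}d_j\|_{\bX}^p\bigr)^{1/p}$, and the hypercontractivity estimate (\ref{hyper}) combined with $\sum_j\lambda_j^{p/2}\leq\mathrm{Tr}(Q)<\infty$ controls the latter by $C|t-s|^\theta$. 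This produces $\bE\|W_{\bA}(t)-W_{\bA}(s)\|_{\bX}^p\leq C|t-s|^{\theta p}$, and Kolmogorov's criterion then delivers the asserted $\alpha$-H\"older continuity.
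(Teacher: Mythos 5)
Your proposal follows essentially the same route as the paper's proof: the same splitting of the increment into the piece $\int_s^t S(t-r)\,{\rm d}X_r$ and the piece $\int_0^s \bigl(S(t-r)-S(s-r)\bigr)\,{\rm d}X_r$, the same use of Assumption \ref{Assumption 1} to bound the first piece by $C\bigl(|t-s|^{2H}+|t-s|^{2(\beta+1)}\bigr)$, the same hypercontractivity argument (via Proposition \ref{lp} and Remark \ref{rnou}, as in Lemma \ref{lem:4.5}) to pass to $L^p$, and Kolmogorov's criterion to conclude. The only divergence is in the second piece, where the paper simply invokes the proof of Theorem 5.1.3 of Da Prato--Zabczyk (written for Wiener noise) to assert a bound $C|t-s|^{2\gamma}$ for any $\gamma\in(0,1)$, while you carry the estimate out explicitly with the analyticity bound $\|(S(t-s)-I)S(s-r)\|_{L(\bX)}\le C(t-s)^{\theta}(s-r)^{-\theta}$ integrated against the covariance kernel, which correctly tracks the constraint $\theta<H\wedge(\beta+1)$ coming from the noise; this is a more self-contained justification of a step the paper delegates to a reference whose setting is different, and it changes nothing in the conclusion since the binding exponent already comes from the first piece.
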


\begin{proof}
  We will use Kolmogorov's continuity criterium for Hilbert valued
  stochastic processes (see \cite[Theorem 3.3]{dpz:Stochastic}). To
  this end, we need the evaluate the increment
  $W_{\bA}(t)-W_{\bA}(s)$. We can write
  \begin{align*}
    W_{\bA}(t)-W_{\bA}(s) =& \sum_{j\geq 1} \sqrt{\lambda _{j}}
    \int_{s}^{t} S(t-u) e_{j} \, {\rm d}x_{j}(u)
    \\
    &+ \sum_{j\geq 1} \sqrt{\lambda _{j}} \int_{0}^{s} (S(t-u)-S(s-u))
    e_{j} \, {\rm d}x_{j}(u)
    \\
    :=& I_{1}+ I_{2}.
  \end{align*}
  Concerning the first term, we get from Assumption \ref{Assumption 1}
  \begin{align*}
    \bE I_{1}^{2} \leq& (Tr Q) c_{1} \int_{s}^{t} \int_{s}^{t}
    e^{-\omega_{1}(t-u)} e^{-\omega_{1}(t-u)} \vert u-v\vert ^{2H-2}
    \, {\rm d}u \, {\rm d}v
    \\
    &+ (Tr Q) c_{2} \int_{s}^{t} \int_{s}^{t} e^{-\omega_{1}(t-u)}
    e^{-\omega_{1}(t-u)}(uv)^{\beta } \, {\rm d}u \, {\rm d}v
    \\
    \leq& C\left( \int_{s}^{t} \int_{s}^{t} \vert u-v\vert ^{2H-2} \,
      {\rm d}u \, {\rm d}v +\int_{s}^{t} \int_{s}^{t}(uv)^{\beta} \,
      {\rm d}u \, {\rm d}v \right)
    \\
    \leq& C (\vert t-s\vert ^{2H} + \vert t-s\vert ^{2(\beta +1)}).
  \end{align*}
  Following the proof of \cite[Theorem 5.1.3]{dpz:Stochastic}, we
  obtain $\bE I_{2}^{2} \leq C\vert t-s\vert ^{2\gamma}$ for any
  $\gamma \in (0,1)$. As a consequence
  \begin{align*}
    \bE \left\| W_{\bA}(t)-W_{\bA}(s) \right\| _{\bX}^{2} \leq C \left(
      \vert t-s\vert ^{2H} + \vert t-s\vert ^{2(\beta +1)} + \vert t-s
      \vert ^{2\gamma }\right)
  \end{align*}
  and by (\ref{hyper}) we will have that (as in the proof of Lemma \ref{lem:4.5}) for every $s$ close to $t$
  \begin{align*}
    \bE \left\| W_{\bA}(t)-W_{\bA}(s) \right\| _{\bX}^{p} \leq C_{p} \left(
      \vert t-s \vert^{pH} + \vert t-s \vert^{p(\beta +1)} \right)
  \end{align*}
  and this bound will imply the existence of an $\alpha$-H\"older continuous
  version of $W_{\bA}$.
\end{proof}

\begin{rem}
  In the case of the fractional Brownian motion and of the Rosenblatt
  process the order of continuity is $H$. For the bifractional
  Brownian motion, since $\beta +1=HK$, the stochastic convolution is
  $HK$ H\"older continuous.
\end{rem}


\section{Existence and uniqueness of the solution}
\label{sec:existence}

Let us first introduce the spaces where the solution will live.
\begin{defn}
  Let $L^2_{\cF}(\Omega; C([0,T]; \bX))$ denote the Banach space of all
  $\mathcal{F}_t$-measurable, pathwise continuous processes, taking
  values in $\bX$, endowed with the norm
  \begin{equation*}
    \left\|X\right\|_{L^2_{\cF}(\Omega;C([0,T];\bX))}=\left({\mathbb E} \sup_{t\in
      [0,T]}\left\|X(t)\right\| _{\bX}^2 \right)^{1/2}
  \end{equation*}
  while $L^2_{\cF}(\Omega;L^2([0,T];\bV))$ denotes the Banach space of all
  mappings $X: [0,T] \to \bV$ such that $X(t)$ is
  $\mathcal{F}_t$-measurable, endowed with the norm
  \begin{equation*}
    \left\|X\right\|_{L^2_{\cF}(\Omega;L^2([0,T];\bV))}=\left({\mathbb E} \int_0^T
      \left\|X(t)\right\|_{\bV}^2 \, {\rm d}t \right)^{1/2}.
  \end{equation*}
\end{defn}

We are concerned with Eq.~(\ref{eq:1}) that we mean to solve in {\em
  mild form}: a process $u \in {L^2_{\cF}(\Omega;C([0,T];\bX))} \cap
{L^2_{\cF}(\Omega;L^2([0,T];\bV))}$ is a solution to  Eq.~(\ref{eq:1}) if
it satisfies ${\mathbb P}$-a.s. the integral equation
\begin{align*}
  u(t) = S(t) u_0 + \int_0^t S(t-\sigma) F(u(\sigma)) \, {\rm d}\sigma
  + W_A(t), \qquad t \in [0,T].
\end{align*}

The strategy of the proof is classical, compare \cite[Theorem
5.5.8]{dpz:Ergodicity}: we consider the difference $(u(t) - W_A(t))_{t
  \in [0,T]}$ and we prove that it satisfies the mild equation and it
belongs to the relevant spaces.

\subsection{Existence of the  solution for deterministic equations}

Let us consider the following evolution equation
\begin{equation}
  \label{eq:d}
  \begin{aligned}
  \frac{\rm d}{{\rm d}t}y(t) &= \bA y(t) + F(z(t) + y(t))
  \\
  y(0) &= u_0,
  \end{aligned}
\end{equation}
where $A$ and $F$ satisfy the dissipativity condition on $\bX$ stated
in Assumptions \ref{Assumption A} and \ref{Assumption F}  and
$z$ is a trajectory of the stochastic convolution process, which
satisfies the regularity conditions stated in Theorem \ref{thm:stoch_conv},
\begin{align*}
  z \in C^{\alpha -\gamma -\varepsilon} \left( [0,T]; D(-A)^{\gamma}
  \right).
\end{align*}
The construction in this section is based on the techniques of
\cite[Section 5.5]{dpz:Ergodicity}; notice however that we are
concerned with a different kind of stochastic convolution and we do
not impose any dissipativity on the operators $\bA$ and $F$ on the
space $\bV$.

\begin{rem}
  The key point in the following construction is the observation that
  $\bV = D((-A)^{1/2})$, compare Remark \ref{rem:V=D(A^1/2)}. Further, in this
  case we impose the following bound: $\frac12 < H$. Therefore, we can
  and do assume that
  \begin{align*}
    z \in C^{H-1/2-\varepsilon} \left( [0,T]; D(-A)^{1/2} \right)
  \end{align*}
  for arbitrary $\varepsilon > 0$.

  Now, notice that the assumption on $F$ implies that $F:\bV \to \bX$
  is continuous, hence the process $(F(z(t)))_{t \in [0,T]}$ is
  continuous and satisfies $\sup_{t \in [0,T]} ||F(z(t))|| _{\bX} <
  +\infty$.
\end{rem}

Let us introduce the Yosida approximations $F_\alpha$ of $F$. It is
known that $F_\alpha$ are Lipschitz continuous, dissipative mappings
such that, for all $u \in \bV$, it holds $F_\alpha(u) \to F(u)$ in
$\bX$, as $\alpha \to 0$.

In this part, we are concerned with the following approximation of
Eq.~(\ref{eq:d}):
\begin{equation}
  \label{eq:d_app}
  \begin{aligned}
    \frac{\rm d}{{\rm d}t} y_\alpha(t) &= \bA y_\alpha(t) +
    F_\alpha(z(t) + y_\alpha(t))
    \\
    y_\alpha(0) &= u_0.
  \end{aligned}
\end{equation}

\begin{lem}\label{prop:sol_approx}
  Let $x\in \bX$. Then, for any $\alpha >0$ there exists a unique mild
  solution $y_{\alpha}(t,x)$ to Eq.~(\ref{eq:d_app}) such that
  \begin{align*}
    y_{\alpha} \in C([0,T];\bX) \cap L^2([0,T];\bV).
  \end{align*}
\end{lem}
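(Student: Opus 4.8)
The plan is to pass to the mild formulation of \eqref{eq:d_app},
\begin{equation*}
  y_\alpha(t) = S(t) x + \int_0^t S(t-s)\, F_\alpha\bigl(z(s) + y_\alpha(s)\bigr)\, {\rm d}s,
\end{equation*}
to solve it by a contraction argument so as to produce a unique element of $C([0,T];\bX)$, and then to recover the $L^2([0,T];\bV)$ regularity from the variational structure of $\bA$. The feature that makes the first step routine is that the Yosida approximation $F_\alpha : \bX \to \bX$ is globally Lipschitz, say with constant $L_\alpha$, and that $z \in C([0,T];\bX)$; hence $(s,y)\mapsto F_\alpha(z(s)+y)$ is continuous in $s$ and globally Lipschitz in $y$, uniformly in $s\in[0,T]$, with the linear growth bound $\|F_\alpha(z(s)+y)\|_\bX \le \|F_\alpha(0)\|_\bX + L_\alpha(\sup_s\|z(s)\|_\bX + \|y\|_\bX)$.

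First I would run the Banach fixed point theorem for the map $(\Gamma y)(t) = S(t)x + \int_0^t S(t-s)F_\alpha(z(s)+y(s))\,{\rm d}s$ on $C([0,T];\bX)$. Using $\|S(t)\|_{L(\bX)} \le M$ from Assumption \ref{Assumption A} together with the Lipschitz bound, one gets $\|(\Gamma y_1)(t)-(\Gamma y_2)(t)\|_\bX \le M L_\alpha \int_0^t \|y_1(s)-y_2(s)\|_\bX\,{\rm d}s$; equipping $C([0,T];\bX)$ with the equivalent weighted norm $\sup_{t}e^{-\lambda t}\|y(t)\|_\bX$ and taking $\lambda$ large turns $\Gamma$ into a strict contraction, so there is a unique mild solution $y_\alpha \in C([0,T];\bX)$. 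Since any process in $C([0,T];\bX)\cap L^2([0,T];\bV)$ that solves \eqref{eq:d_app} is in particular such a mild solution, this already settles uniqueness in the stated class.

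To upgrade the regularity I would invoke the variational setting. By Remark \ref{rem:V=D(A^1/2)} one has $\bV = D((-\bA)^{1/2})$, and $\bA$ is associated with the coercive, continuous, densely defined form $(\mathfrak a, \bV)$ of Assumption \ref{Assumption A}, so that $\bV \hookrightarrow \bX \hookrightarrow \bV'$ is a Gelfand triple and $\bA$ extends to a bounded coercive operator $\bV \to \bV'$. Writing $f(s) = F_\alpha(z(s)+y_\alpha(s))$, the growth bound above gives $f \in C([0,T];\bX) \subset L^2([0,T];\bX) \subset L^2([0,T];\bV')$. The mild solution $y_\alpha$ then coincides with the weak (variational) solution of the linear problem $y' = \bA y + f$, $y(0)=x$, and Lions' theorem on abstract parabolic equations provides the energy estimate
\begin{equation*}
  \sup_{t\in[0,T]}\|y_\alpha(t)\|_\bX^2 + c\int_0^T \|y_\alpha(s)\|_\bV^2\,{\rm d}s \le C\left(\|x\|_\bX^2 + \int_0^T \|f(s)\|_{\bV'}^2\,{\rm d}s\right) < \infty,
\end{equation*}
where $c>0$ is the coercivity constant of $\mathfrak a$. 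This yields $y_\alpha \in L^2([0,T];\bV) \cap C([0,T];\bX)$, completing the proof.

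I expect the genuinely delicate point to be precisely this last step for a general datum $x\in\bX$ (not $x\in\bV$). The crude smoothing estimate $\|(-\bA)^{1/2}S(t)\|_{L(\bX)} \le Ct^{-1/2}$ applied to the term $S(t)x$ fails to be square-integrable near $t=0$, so it cannot by itself place $S(\cdot)x$ in $L^2([0,T];\bV)$; the correct mechanism is the energy (square-function) estimate built into the variational theory, which rests on the coercivity of $\mathfrak a$ and remains valid even though $\bA$ is not self-adjoint. This is exactly where the identification $\bV = D((-\bA)^{1/2})$ is used, and it is the part to be carried out with care — for instance by a Galerkin approximation in $\bV$ and passage to the limit — since we deliberately refrain from imposing any dissipativity of $\bA$ or $F$ on $\bV$.
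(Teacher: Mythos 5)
Your proof is correct, and it establishes the lemma as stated, but it takes a genuinely different route from the paper on the key point. The paper dispatches existence in one line (Lipschitz $F_\alpha$ makes it ``standard'' --- essentially your contraction argument) and spends all its effort on an \emph{energy estimate performed directly on the nonlinear equation}: testing with $y_\alpha$, using $\langle \bA u,u\rangle \le -\omega\|u\|_\bV^2$ (Remark \ref{rem:coercive}) together with the \emph{dissipativity} of $F_\alpha$ to reduce $\langle F_\alpha(z+y_\alpha),y_\alpha\rangle$ to $\langle F_\alpha(z),y_\alpha\rangle$, and then Gronwall. The payoff is estimate (\ref{eq:2}), whose constant is \emph{uniform in $\alpha$}. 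You instead freeze the nonlinearity as data $f = F_\alpha(z+y_\alpha) \in L^2([0,T];\bX)$ and invoke Lions' linear variational theory on the Gelfand triple $\bV \hookrightarrow \bX \hookrightarrow \bV'$; this is a perfectly valid way to get $y_\alpha \in C([0,T];\bX)\cap L^2([0,T];\bV)$, and your remark that the crude smoothing bound $\|(-\bA)^{1/2}S(t)\|\le Ct^{-1/2}$ fails while the square-function/energy mechanism succeeds is exactly right. The trade-off: your bound on $f$ goes through the Lipschitz constant $L_\alpha$ (and through $\|y_\alpha\|_{C([0,T];\bX)}$, itself obtained by Gronwall with constant of order $e^{ML_\alpha T}$), so every constant in your energy estimate blows up as $\alpha\to 0$. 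Since the whole reason this lemma exists is to feed the $\alpha$-uniform bound (\ref{eq:2}) into the subsequent proof that $(y_\alpha)$ is Cauchy in $C([0,T];\bX)\cap L^2([0,T];\bV)$, your argument would have to be supplemented by the paper's dissipativity trick anyway before the rest of Section \ref{sec:existence} could proceed. Conversely, your proof is more self-contained on the existence side, where the paper merely appeals to standard theory.
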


\begin{proof}
  Since $F_\alpha$ are Lipschitz continuous, the existence of the
  solution to (\ref{eq:d_app}) is standard. It remains to prove the
  existence of a estimate that is uniform in $\alpha$.

  By the assumptions on $\bA$ there exists $\omega > 0$ such that
  $\langle \bA u,u \rangle \le -\omega \|u\|_{\bV}^2$, compare also
  Remark \ref{rem:coercive}; using the dissipativity of $F$ we have
  \begin{align*}
    \frac12 ||y_\alpha(t)||_{\bX}^2 &= \frac12 ||u_0||_{\bX}^2 + \int_0^t \langle \bA
    y_\alpha(s), y_\alpha(s) \rangle_{\bX} \, {\rm d}s + \int_0^t
    \langle F_\alpha(z(s) + y_\alpha(s)), y_\alpha(s) \rangle_{\bX} \,
    {\rm d}s
    \\
    &\le \frac12 ||u_0||_{\bX}^2 - \omega \int_0^t \| y_\alpha(s)\|_{\bV}^2 \,
    {\rm d}s + \int_0^t \langle F_\alpha(z(s)), y_\alpha(s)
    \rangle_{\bX} \, {\rm d}s
    \\
    &\le \frac12 ||u_0||_{\bX} ^2 - \omega \int_0^t \| y_\alpha(s)\|_{\bV}^2 \,
    {\rm d}s + T \sup_{t \in [0,T]} ||F(z(t))||^2_{\bX} + \int_0^t
    ||y_\alpha(s)|| _{\bX}^2 \, {\rm d}s
  \end{align*}
  which implies, by an application of Gronwall's lemma, that
  \begin{equation}
    \label{eq:2}
    \sup_{t \in [0,T]}\left( \frac12 ||y_\alpha(t)||_{\bX}^2 + \omega \int_0^t \|
    y_\alpha(s)\|_{\bV}^2 \, {\rm d}s \right) \le C(T,u_0,z).
  \end{equation}
  Notice that the constant on the right-hand side is independent of
  $\alpha$.
\end{proof}

\begin{lem}
  For every $\alpha > 0$, $u_0, u_1 \in \bX$, it holds
  \begin{equation}
    \label{eq:reg_dip_in_cond_app}
    \sup_{t \in [0,T]} ||y^{u_0}_\alpha(t) - y^{u_1}_\alpha(t)||^2_{\bX}
    \le C ||u_0 - u_1|| _{\bX}
    ^2.
  \end{equation}
\end{lem}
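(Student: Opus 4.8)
The plan is to repeat the energy estimate from the proof of Lemma~\ref{prop:sol_approx}, now applied to the difference of the two solutions. The decisive observation is that both $y^{u_0}_\alpha$ and $y^{u_1}_\alpha$ are driven by the \emph{same} trajectory $z$ of the stochastic convolution, so the noise term will cancel through the dissipativity of $F_\alpha$.

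First I would set $w(t) = y^{u_0}_\alpha(t) - y^{u_1}_\alpha(t)$, which is a mild solution of
\begin{equation*}
  \frac{\rm d}{{\rm d}t} w(t) = \bA w(t) + \bigl( F_\alpha(z(t)+y^{u_0}_\alpha(t)) - F_\alpha(z(t)+y^{u_1}_\alpha(t)) \bigr),
\end{equation*}
with $w(0) = u_0 - u_1$. Since $F_\alpha$ is Lipschitz, $w$ is regular enough that the energy identity for $\tfrac12\|w(t)\|_{\bX}^2$ holds exactly as in Lemma~\ref{prop:sol_approx}, producing a drift term $\langle \bA w(s), w(s)\rangle_{\bX}$ and a nonlinear term $\langle F_\alpha(z(s)+y^{u_0}_\alpha(s)) - F_\alpha(z(s)+y^{u_1}_\alpha(s)), w(s)\rangle_{\bX}$.

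Next I would bound the two terms separately. The coercivity in Assumption~\ref{Assumption A} gives $\langle \bA w(s), w(s)\rangle_{\bX} \le -\omega \|w(s)\|_{\bV}^2 \le 0$. For the nonlinear term I would apply the dissipativity of the Yosida approximation $F_\alpha$ to the pair $z(s)+y^{u_0}_\alpha(s)$ and $z(s)+y^{u_1}_\alpha(s)$, whose difference is exactly $w(s)$; since the common summand $z(s)$ drops out, this gives
\begin{equation*}
  \langle F_\alpha(z(s)+y^{u_0}_\alpha(s)) - F_\alpha(z(s)+y^{u_1}_\alpha(s)), w(s)\rangle_{\bX} \le 0.
\end{equation*}
This cancellation of the common noise is the only genuinely delicate point of the argument.

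Combining the two bounds yields $\tfrac{\rm d}{{\rm d}t}\|w(t)\|_{\bX}^2 \le 0$, hence $\|w(t)\|_{\bX}^2 \le \|u_0 - u_1\|_{\bX}^2$ for all $t \in [0,T]$, which is \eqref{eq:reg_dip_in_cond_app} with a constant independent of $\alpha$. (Should one only have dissipativity up to a linear correction, the same computation gives $\tfrac{\rm d}{{\rm d}t}\|w(t)\|_{\bX}^2 \le C'\|w(t)\|_{\bX}^2$, and Gronwall's lemma then yields \eqref{eq:reg_dip_in_cond_app} with $C = e^{C'T}$.) I do not expect any serious obstacle here: the only step requiring care is the justification of the energy identity for the mild solution $w$, which is standard in the present analytic-semigroup setting with Lipschitz nonlinearity and was already invoked in the proof of Lemma~\ref{prop:sol_approx}.
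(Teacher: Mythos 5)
Your proof is correct and follows essentially the same route as the paper: the energy identity for the difference $w(t)=y^{u_0}_\alpha(t)-y^{u_1}_\alpha(t)$, the coercivity of the form giving $\langle \bA w,w\rangle_{\bX}\le -\omega\|w\|^2_{\bV}$, and the dissipativity of the Yosida approximation $F_\alpha$ applied to the two arguments differing exactly by $w(s)$ (the common $z(s)$ cancelling), followed by Gronwall. The paper keeps the negative drift term and obtains the slightly sharper bound $\|w(t)\|^2_{\bX}\le e^{-2\omega t}\|u_0-u_1\|^2_{\bX}$, whereas you simply discard the nonpositive terms, but this is a cosmetic difference.
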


\begin{proof}
   us consider the difference $y_{\alpha}^{u_0}(t) -
  y_{\alpha}^{u_1}(t)$, for $x, \bar{x} \in H$:
  \begin{align*}
    \frac{\rm d}{{\rm d}t} \left[ y_{\alpha}^{u_0}(t) -
      y_{\alpha}^{u_1}(t) \right] = \bA \left[y_{\alpha}^{u_0}(t) -
      y_{\alpha}^{u_1}(t)\right] + \left[F_{\alpha}(z(t) +
      y_{\alpha}^{u_0}(t)) - F_{\alpha}(z(t) +
      y_{\alpha}^{u_1}(t))\right]
  \end{align*}
  hence
  \begin{multline*}
    ||y_{\alpha}^{u_0}(t) - y_{\alpha}^{u_1}(t)||_{\bX}^2
   = \left\|u_0 - u_1 \right\|_{\bX}^2 + 2 \int_0^t \left\langle A (
      y_{\alpha}^{u_0}(s) - y_{\alpha}^{u_1}(s)), y_{\alpha}^{u_0}(s)
      - y_{\alpha}^{u_1}(s) \right\rangle \, {\rm d}s
    \\
    + 2\int_0^t \left\langle F_{\alpha}(y_{\alpha}^{u_0}(s)) -
      F_{\alpha}(y_{\alpha}^{u_1}(s)), y_{\alpha}^{u_0}(s) -
      y_{\alpha}^{u_1}(s) \right\rangle \, {\rm d}s
  \end{multline*}
  and therefore
  \begin{align*}
    \left\| y_{\alpha}^{u_0}(t) - y_{\alpha}^{u_1}(t )\right\|^2_\bX
    \leq \left\| u_0 - u_1 \right\|^2_\bX - 2 \omega \int_0^t \left\|
      y_{\alpha}^{u_0}(s) - y_{\alpha}^{u_1}(s) \right\|^2_\bX \, {\rm
      d}s.
  \end{align*}
  Applying Gronwall's lemma we obtain
  \begin{equation}
    \label{eq:dip_dato_iniz}
    \left\| y_{\alpha}^{u_0}(t) - y_{\alpha}^{u_1}(t) \right\|^2_{\bX}
    \leq e^{-2 \omega t}\left\|u_0 - u_1 \right\|^2_{\bX}.
  \end{equation}
\end{proof}

\begin{lem}
  The sequence $(y_\alpha)_{\alpha > 0}$ is a Cauchy sequence in
  $C([0,T];\bX) \cap L^2([0,T];\bV)$.
\end{lem}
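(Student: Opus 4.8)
The plan is to run a standard energy estimate on the difference $w_{\alpha\beta}=y_\alpha-y_\beta$, exploiting the coercivity of $\bA$ and the dissipativity of the Yosida approximations, and to show that the only surviving contribution is an error controlled by $\alpha+\beta$. Since $y_\alpha(0)=y_\beta(0)=u_0$, the difference satisfies $w_{\alpha\beta}(0)=0$ and
\begin{align*}
  \frac{\rm d}{{\rm d}t}w_{\alpha\beta}(t)=\bA w_{\alpha\beta}(t)+\big[F_\alpha(z(t)+y_\alpha(t))-F_\beta(z(t)+y_\beta(t))\big].
\end{align*}
First I would pair with $w_{\alpha\beta}$ in $\bX$ (rigorously, in the variational pairing used to derive \eqref{eq:2}), integrate on $[0,t]$, and use $\langle \bA u,u\rangle\le -\omega\|u\|_\bV^2$ to obtain
\begin{align*}
  \frac12\|w_{\alpha\beta}(t)\|_\bX^2+\omega\int_0^t\|w_{\alpha\beta}(s)\|_\bV^2\,{\rm d}s\le \int_0^t\big\langle F_\alpha(z+y_\alpha)-F_\beta(z+y_\beta),\,y_\alpha-y_\beta\big\rangle\,{\rm d}s.
\end{align*}
Controlling the two left-hand norms simultaneously is exactly what delivers the Cauchy property in $C([0,T];\bX)\cap L^2([0,T];\bV)$.

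The heart of the argument is the treatment of the right-hand side. Writing $J_\alpha=(I-\alpha F)^{-1}$, so that $F_\alpha=F\circ J_\alpha$ and $\alpha F_\alpha(w)=J_\alpha(w)-w$, and setting $u=z+y_\alpha$, $v=z+y_\beta$ (so that $u-v=y_\alpha-y_\beta$), I would use the decomposition $u-v=(J_\alpha u-J_\beta v)-(\alpha F_\alpha(u)-\beta F_\beta(v))$ to split
\begin{align*}
  \langle F_\alpha(u)-F_\beta(v),u-v\rangle=\langle F_\alpha(u)-F_\beta(v),J_\alpha u-J_\beta v\rangle-\langle F_\alpha(u)-F_\beta(v),\alpha F_\alpha(u)-\beta F_\beta(v)\rangle.
\end{align*}
Since $F_\alpha(u)=F(J_\alpha u)$ and $F_\beta(v)=F(J_\beta v)$, the dissipativity of $F$ forces the first pairing to be $\le 0$, while the second is bounded in absolute value by $(\|F_\alpha(u)\|_\bX+\|F_\beta(v)\|_\bX)(\alpha\|F_\alpha(u)\|_\bX+\beta\|F_\beta(v)\|_\bX)$. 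Once a uniform bound $\sup_\alpha\|F_\alpha(z(s)+y_\alpha(s))\|_\bX\le K(s)$ with $K$ sufficiently integrable is available, this error term is $\le C(\alpha+\beta)\to 0$, and the energy inequality closes (a Gronwall step being needed only to absorb any residual lower-order $\|w_{\alpha\beta}\|_\bX^2$ contribution).

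The main obstacle is precisely this uniform bound on $F_\alpha(z+y_\alpha)$, and it is here that the $\bV/\bX$ gap and the polynomial growth of $F$ enter. Because $z(s)\in\bV$ and, by \eqref{eq:2}, $y_\alpha(s)\in\bV$ for a.e.\ $s$, we have $z(s)+y_\alpha(s)\in\bV\subset D(F)$, so the contraction property $\|F_\alpha(w)\|_\bX\le\|F(w)\|_\bX$ combined with the polynomial growth of $F:\bV\to\bX$ gives $\|F_\alpha(z+y_\alpha)\|_\bX\le c\,(1+\|z+y_\alpha\|_\bV^p)$. The delicate point is that \eqref{eq:2} only furnishes an $L^2$-in-time bound on $\|y_\alpha\|_\bV$, which does not suffice to integrate a power $p>1$; I expect this to be the step requiring genuine work. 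Two routes seem viable: either bootstrap \eqref{eq:2} to a higher-integrability a priori estimate on $\|y_\alpha\|_\bV$ that is uniform in $\alpha$, or exploit that $\alpha F_\alpha(w)=J_\alpha(w)-w\to 0$ pointwise and invoke dominated convergence after a Cauchy--Schwarz split of the error, using the uniform $L^2(\bV)$ bound as dominating function. In either case, once the error is shown to vanish as $\alpha,\beta\to 0$, the left-hand side of the energy estimate shows that $(y_\alpha)_{\alpha>0}$ is Cauchy in both $C([0,T];\bX)$ and $L^2([0,T];\bV)$, and its limit $y$ is the sought solution of \eqref{eq:d}.
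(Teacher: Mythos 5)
Your strategy coincides with the paper's: write the equation for the difference $y_\alpha-y_\beta$, test against the difference, use coercivity of $\bA$ and the dissipativity structure of the Yosida approximations, and reduce everything to a uniform-in-$\alpha$ bound on $\int_0^T \|F_\alpha(z(s)+y_\alpha(s))\|^2_\bX\,{\rm d}s$. In fact your decomposition $u-v=(J_\alpha u-J_\beta v)-(\alpha F_\alpha(u)-\beta F_\beta(v))$ is exactly the proof of the inequality
\begin{align*}
  \langle F_\alpha(x)-F_\beta(y),\,x-y\rangle_\bX \le (\alpha+\beta)\left[\|F_\alpha(x)\|^2_\bX+\|F_\beta(y)\|^2_\bX\right]
\end{align*}
which the paper simply quotes from \cite[Proposition 5.5.4]{dpz:Ergodicity}. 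A point in your favor: you keep the $\bV$-norm on the left-hand side of the energy estimate, which is what coercivity actually gives and what is needed for the Cauchy property in $L^2([0,T];\bV)$; the paper's display \eqref{eq:3} and its final estimate carry an $\bX$-norm there, evidently a typo.

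However, your proof stops short at precisely the decisive step, so as written it has a gap. You correctly note that under the literal reading of Assumption \ref{Assumption F} (polynomial growth of $F:\bV\to\bX$) one only gets $\|F_\alpha(z+y_\alpha)\|_\bX\le c\,(1+\|z+y_\alpha\|_\bV^p)$, and that the uniform estimate \eqref{eq:2}, being an $L^2$-in-time bound on $\|y_\alpha\|_\bV$, cannot control $\int_0^T\|y_\alpha\|_\bV^{2p}$ when $p>1$; but you then only sketch two remedies without carrying either out, and the dominated-convergence route is not viable as stated, since the integrand depends on $\alpha$ and no $\alpha$-independent integrable dominating function is exhibited. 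The paper closes this step by the stronger bound $\|F_\alpha(w)\|^2_\bX\le\|F(w)\|^2_\bX\le L\,\|w\|^2_\bV$, i.e.\ \emph{linear} growth of $F$ from $\bV$ to $\bX$; with this, \eqref{eq:2} together with $z\in C([0,T];\bV)$ bounds $\int_0^T\|F_\alpha(z+y_\alpha)\|^2_\bX\,{\rm d}s$ uniformly in $\alpha,\beta$, so the right-hand side of \eqref{eq:3} is $\le C(\alpha+\beta)\to 0$ and the lemma follows. So to complete your argument along the paper's lines you must take $p=1$ (equivalently, assume the linear-growth estimate); your own energy inequality then finishes the proof. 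Your hesitation does expose a genuine imprecision in the paper --- continuity plus polynomial growth, as stated in Assumption \ref{Assumption F}, does not by itself yield $\|F(w)\|^2_\bX\le L\|w\|^2_\bV$ --- but as it stands your proposal leaves the statement unproven.
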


\begin{proof}
  Let $\alpha, \beta > 0$. Then we compute
  \begin{align*}
    \frac{\rm d}{{\rm d}t} \left[ y_{\alpha}(t) - y_{\beta}(t) \right]
    = \bA \left[y_{\alpha}(t) - y_{\beta}(t)\right] +
    \left[F_{\alpha}(z(t) + y_{\alpha}(t)) - F_{\beta}(z(t) +
      y_{\beta}(t))\right]
  \end{align*}
  Now, let us recall that
  \begin{align*}
    \langle F_\alpha(x) - F_\beta(y), x-y \rangle_{\bX} \le (\alpha +
    \beta) \left[ |F_\alpha(x)|^2 + |F_\beta(y)|^2 \right]
  \end{align*}
  for all $x, y \in \bV$, $\alpha, \beta > 0$ (compare
  \cite[Proposition 5.5.4]{dpz:Ergodicity}; it follows that
  \begin{multline}
    \label{eq:3}
    \frac12 \left\| y_{\alpha}(t) - y_{\beta}(t) \right\|^2_{\bX} +
    \omega \int_0^t \left\| y_{\alpha}(s) - y_{\beta}(s)
    \right\|^2_{\bX} \, {\rm d}s
    \\
    \le (\alpha + \beta) \int_0^t \left\| F_{\alpha}(z(s) +
      y_{\alpha}(s))\right\|^2_{\bX} + \left\| F_{\beta}(z(s) +
      y_{\beta}(s))\right\|^2_{\bX} \, {\rm d}s.
  \end{multline}
  Since $F: \bV \to \bX$ is continuous, it follows that for some
  contant $L > 0$
  \begin{multline*}
    \left\| F_{\alpha}(z(s) + y_{\alpha}(s))\right\|^2_{\bX} \le \left\|
      F(z(s) + y_{\alpha}(s))\right\|^2_{\bX}
    \\
    \le L \, \left\| z(s) + y_{\alpha}(s)\right\|^2_{\bV} \le 2L \,
    \left[ \|z(s)\|^2_{\bV} + \|y_{\alpha}(s)\|^2_{\bV} \right]
  \end{multline*}
  hence by using estimate (\ref{eq:2})
  \begin{multline*}
    \int_0^T \left\| F_{\alpha}(z(s) + y_{\alpha}(s))\right\|^2_{\bX} +
    \left\| F_{\beta}(z(s) + y_{\beta}(s))\right\|^2_{\bX} \, {\rm d}s
    \\
    \le 2LT \|z\|_{C([0,T];\bV)}^2 + C(T,u_0,z,\omega,L)
  \end{multline*}
  is bounded by a constant that does not depend on $\alpha$ and
  $\beta$. If we put the above estimate in (\ref{eq:3}) we obtain
  \begin{align*}
    \frac12 \sup_{t \in [0,T]} \left\| y_{\alpha}(t) - y_{\beta}(t)
    \right\|^2_{\bX} + \omega \int_0^T \left\| y_{\alpha}(s) -
      y_{\beta}(s) \right\|^2_{\bX} \, {\rm d}s \le C (\alpha + \beta)
  \end{align*}
  which easily implies the thesis.
\end{proof}

\begin{thm}
  For any $z \in C([0,T];\bV)$ there exists a unique solution
  $(y(t))_{t \in [0,T]}$ to Eq.~(\ref{eq:d}),
  \begin{align*}
    y \in C([0,T];\bV) \cap L^2([0,T];\bX)
  \end{align*}
  and it depends continuously on the initial condition $u_0 \in \bX$.
\end{thm}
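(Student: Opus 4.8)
The plan is to obtain the solution as the limit of the Yosida approximations $y_{\alpha}$ as $\alpha \to 0$. By the preceding lemma the family $(y_{\alpha})_{\alpha > 0}$ is a Cauchy sequence in the Banach space $C([0,T];\bX) \cap L^2([0,T];\bV)$, so it converges to a limit $y$ in this space; in particular the regularity $y \in C([0,T];\bX) \cap L^2([0,T];\bV)$ is immediate. It then remains to check that $y$ solves (\ref{eq:d}) in the mild sense, to establish uniqueness, and to pass the estimate (\ref{eq:dip_dato_iniz}) to the limit.

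First I would identify $y$ as a mild solution by letting $\alpha \to 0$ in the integral equation
\[
  y_{\alpha}(t) = S(t) u_0 + \int_0^t S(t-s) F_{\alpha}(z(s)+y_{\alpha}(s)) \, {\rm d}s .
\]
The left-hand side converges uniformly to $y(t)$. For the nonlinear term, since $y_{\alpha} \to y$ in $L^2([0,T];\bV)$, along a subsequence $y_{\alpha}(s) \to y(s)$ in $\bV$ for almost every $s$; combining the Yosida convergence $F_{\alpha}(w) \to F(w)$ with the continuity of $F: \bV \to \bX$ gives $F_{\alpha}(z(s)+y_{\alpha}(s)) \to F(z(s)+y(s))$ in $\bX$ for a.e.\ $s$. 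The uniform bound on $\int_0^T \| F_{\alpha}(z(s)+y_{\alpha}(s)) \|_{\bX}^2 \, {\rm d}s$ coming from (\ref{eq:2}) and the growth of $F$ supplies the uniform integrability needed to upgrade this pointwise convergence to convergence in $L^1([0,T];\bX)$. Since $\|S(t-s)\|_{L(\bX)} \le M$, the convolution integrals converge for every $t$, and the mild form of (\ref{eq:d}) holds for $y$.

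For uniqueness I would work directly with (\ref{eq:d}): if $y_1, y_2$ are two solutions with the same initial datum, the difference $w = y_1 - y_2$ satisfies $w(0) = 0$, and testing the equation against $w$, using the coercivity $\langle \bA w, w\rangle_{\bX} \le -\omega \|w\|_{\bV}^2$ and the monotonicity of $-F$, yields
\[
  \frac{1}{2} \frac{\rm d}{{\rm d}t} \|w(t)\|_{\bX}^2 \le -\omega \|w(t)\|_{\bV}^2 \le 0 ,
\]
so $w \equiv 0$. Continuous dependence then follows by letting $\alpha \to 0$ in (\ref{eq:dip_dato_iniz}), which gives $\|y^{u_0}(t) - y^{u_1}(t)\|_{\bX}^2 \le e^{-2\omega t}\|u_0 - u_1\|_{\bX}^2$.

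The step I expect to be the main obstacle is the passage to the limit in the nonlinear term: because $F$ is only continuous with polynomial growth (and not Lipschitz), the limit of $F_{\alpha}(z + y_{\alpha})$ cannot be identified through any contraction argument and must instead be handled by combining the almost-everywhere convergence of the $\bV$-valued trajectories with the uniform energy bound (\ref{eq:2}); it is precisely here that the regularity $z \in C([0,T];\bV)$ and the $\alpha$-independence of the a priori estimate are essential.
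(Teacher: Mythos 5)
Your proposal is correct and follows essentially the same route as the paper: convergence of the Yosida approximations in $C([0,T];\bX)\cap L^2([0,T];\bV)$, identification of the nonlinear term through a.e.\ convergence plus continuity of $F:\bV\to\bX$, uniform integrability (Vitali's theorem, which the paper cites explicitly) to pass to the limit in the convolution, and continuous dependence from the $\alpha$-independent estimate \eqref{eq:dip_dato_iniz}. Two minor remarks: the paper makes your step $F_\alpha(z(s)+y_\alpha(s))\to F(z(s)+y(s))$ rigorous by writing $F_\alpha=F\circ J_\alpha$ and using that the resolvents $J_\alpha$ are nonexpansive and converge to the identity, so that $J_\alpha(z(s)+y_\alpha(s))\to z(s)+y(s)$ (pointwise Yosida convergence alone, combined with continuity of $F$, does not suffice when the argument also varies with $\alpha$); and your direct monotonicity argument for uniqueness is a welcome addition, since the paper's proof only asserts uniqueness of the limit of the approximating sequence rather than uniqueness of solutions.
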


\begin{proof}
  Since $y_\alpha$ is a Cauchy sequence in $C([0,T];\bV) \cap
  L^2([0,T];\bX)$ it converges to a unique function $y$ in the same
  space; it remains to show that $(y(t))_{t \in [0,T]}$ actually
  solves (\ref{eq:d}). Also, the continuous dependence on the initial
  condition follows from the same property proved for the
  approximating functions $y_\alpha$, since the estimate in
  (\ref{eq:reg_dip_in_cond_app}) does not depend on $\alpha$ and it is
  conserved at the limit.

  By the claimed convergence of $y_\alpha$, since $J_\alpha$ is a
  sequence of continuous mapping that converges to the identity, it
  holds that $J_\alpha(y_\alpha(s)) \to y(s) \in \bV$ a.s.\ on
  $[0,T]$. Therefore, by the continuity of $F$, it follows that
  \begin{align*}
    F_\alpha(z(s) + y_\alpha(s)) \to F(z(s) + y(s)) \in \bX \qquad
    \text{a.s.\ on $[0,T]$.}
  \end{align*}
  Now we use Vitali's theorem (the Uniform Integrability Convergence
  Theorem, compare \cite[Theorem 9.1.6]{Rosenthal}), to conclude that
  \begin{align*}
    \int_0^t S(t-s) F_\alpha(z(s) + y_\alpha(s)) \, {\rm d}s
    \longrightarrow \int_0^t S(t-s) F(z(s) + y(s)) \, {\rm d}s.
  \end{align*}
\end{proof}


\section{A network model for a neuronal cell}
\label{sec:biology}

In this paper we aim to investigate a mathematical model of a complete
neuron which is subject to stochastic perturbations; for a complete introduction to the
biological motivations, see \cite{KeSn}.
Our model is based on the deterministic one for the whole neuronal
network that has been recently introduced in \cite{CM07}; we shall
borrow from this paper the basic analytical framework for the
well-posedness of the problem.

We treat the neuron as a simple graph with different kind of
(stochastic) evolutions on the edges and dynamic Kirchhoff-type
condition on the central node (the soma). This approach is made
possible by the recent development of techniques of network evolution
equations; hence, as opposite to most of the papers in the literature,
which concentrate on some parts of the neuron, could it be the
dendritic network, the soma or the axon, we take into account the
complete cell.

In this paper, we schematize a neuron as a network by considering
\begin{itemize}
\item a FitzHugh-Nagumo (nonlinear) system on the axon, coupled with
\item a (linear) Rall model for the dendritical tree, complemented
  with
\item Kirchhoff-type rule in the soma.
\end{itemize}
It is commonly accepted that dendrites conduct electricity in a
passive way. The well known Rall's model \cite{Ra59, Ra60} simplify
the analysis of this part by considering a simpler, concentrated
``equivalent cylinder'' (of finite length $\ell_d$) that schematizes a
dendritical tree; he showed that a linear cable equation fits
experimental data on dendritical trees quite well, provided that it is
complemented by a suitable dynamical conditions imposed in the
interval end corresponding to the soma. Further efforts have been put
on models for signal propagation along the axon.  Shortly after the
publication of Hodgkin and Huxley's model for the diffusion of
electric potential in the squid giant axon, a more analytically
treatable model was proposed by FitzHugh and Nagumo; the model is able
to catch the main mathematical properties of excitation and
propagation using
\begin{itemize}
\item[$\circ$] a voltage-like variable having cubic nonlinearity that
  allows regenerative self-excitation via a positive feedback, and
\item[$\circ$] a recovery variable having a linear dynamics that
  provides a slower negative feedback.
\end{itemize}
In our model the axon has length $\ell$, i.e. the space variable $x$
in the above equations ranges in an interval $(0, \ell)$, where the
soma (the cell body) is identified with the point 0.

\medskip

There is a large evidence in the literature that realistic
neurobiological models shall incorporate stochastic terms to model
real inputs.  It is classical to model the random perturbation with a
Wiener process, compare \cite{ricciardi}, as it comes from a central
limit theorem applied to a sequence of independent random variables.

However, there is a considerable interest in literature to apply
different kind of noises: we shall mention long-range dependence
processes and self-similar processes, as their features better model
the real inputs: see the contributions in~\cite[Part II]{RanDin03}.
Further, they can be justified theoretically as they arise in the so
called Non Central Limit Theorem, see for instance \cite{DM,Ta1}.

The fractional Brownian motion is of course the most studied process
in the class of Hermite processes due to its significant importance in
modeling. It is not only selfsimilar, but also exhibits long-range
dependence, i.e., the behaviour of the process at time $t$ does depend
on the whole history up to time $t$, stationarity of the increments and
continuity of trajectories.

\subsection{The abstract formulation}
\label{sez3}

In the following, as long as we allow for variable coefficients in the
diffusion operator, we can let the edges of the neuronal network to be
described by the interval $[0,1]$.
The general form of the equation we are concerned with can be written
as a system in the space $\bX = (L^2(0,1))^2 \times \bR
\times L^2(0,1)$ for the unknowns $(u, u_d, d, v)$:
\begin{equation}
  \label{eq:system-1}
  \begin{aligned}
    \tfrac{\partial}{\partial t} u(t,x) &= \tfrac{\partial}{\partial
      x} \left(c(x) \tfrac{\partial}{\partial x} u(t,x) \right) - p(x)
    u(t,x) - v(t,x) + \theta(u(t,x)) + \tfrac{\partial}{\partial t}
    \zeta^u(t,x)
    \\
    \tfrac{\partial}{\partial t} u_d(t,x) &= \tfrac{\partial}{\partial
      x} \left(c_d(x) \tfrac{\partial}{\partial x} u_d(t,x) \right) -
    p_d(x) u_d(t,x) + \tfrac{\partial}{\partial t} \zeta^d(t,x)
    \\
    \tfrac{\partial}{\partial t} d(t) &= - \gamma d(t) - \left(c(0)
      \tfrac{\partial}{\partial x}u(t,0) - c_d(1)
      \tfrac{\partial}{\partial x}u_d(t,1) \right)
    \\
    \tfrac{\partial}{\partial t} v(t,x) &= u(t,x) - \epsilon v(t,x) +
    \tfrac{\partial}{\partial t} \zeta^v(t,x)
  \end{aligned}
\end{equation}
under the following continuity, boundary and initial conditions
\begin{equation}
  \label{eq:system-2}
  \begin{aligned}
    &d(t) = u(t,0) = u_d(t,1), &\qquad t \ge 0
    \\
    &\tfrac{\partial}{\partial x}u(t,1) = 0, \qquad
    \tfrac{\partial}{\partial x}u_d(t,0) = 0, &\qquad t \ge 0
    \\
    &u(0,x) = u_0(x), \qquad v(0,x) = v_0(x), \qquad u_d(0,x) =
    u_{d;0}(x).
  \end{aligned}
\end{equation}

Throughout the paper we shall assume that the coefficients in
(\ref{eq:system-1}) satisfy the following conditions.

\pagebreak[1]

\begin{assumpt}$\null$
  \begin{itemize}
  \item The function $\theta: \bR \to \bR$ satisfies some
    dissipativity conditions: there exists $\lambda \ge 0$ such that
    \begin{equation}
      \label{eq:intro-h}
      \begin{aligned}
        &\text{for $h(u) = -\lambda u + \theta(u)$ it holds}
        \\
        & [h(u)-h(v)](u-v) \le 0 \quad \forall\ u, v \in \bR; \quad
        |h(u)| \le c (1 + |u|^{2\rho+1}), \quad \rho \in \bN.
      \end{aligned}
    \end{equation}
  \item $c, c_d, p, p_d \in C^1([0,1])$ are continuous, positive
    functions such that, for some $C > 0$,
    \begin{align*}
      C \le c(x), c_d(x) \le \frac1C,\quad C' \le p(x)-\lambda, p_d(x)
      \le \frac{1}{C'};
    \end{align*}
  \item $\gamma > 0$, $\epsilon > 0$ are given constants.
  \end{itemize}
\end{assumpt}

\begin{rem}
  The function $\theta: \bR \to \bR$, in the classical model of
  FitzHugh, is given by $\theta(u) = u(1-u)(u-\xi)$ for some $\xi \in
  (0,1)$; it satisfies (\ref{eq:intro-h}) with $\lambda =
  \frac{1}{3}(\xi^2-\xi+1)$. Other examples of nonlinear conditions
  are known in the literature, see for instance \cite{Iz04} and the
  references therein.
\end{rem}

Our aim is to write equation \eqref{eq:system-1}, endowed with the
conditions in (\ref{eq:system-2}), in an abstract form in the Hilbert
space $\bX = (L^2(0,1))^2 \times \bR \times L^2(0,1)$. We also
introduce the Banach space $\bY = (C([0,1]))^2 \times \bR \times
L^2(0,1)$ that is continuously (but not compactly) embedded in $\bX$.
In this section we establish the basic framework that we need in order
to solve the abstract problem. To this aim we need to prove that the
linear part of the system defines a linear, unbounded operator $\bA$
that generates on $\bX$ an analytic semigroup. We
shall also study the dissipativity of $\bA$ and of the nonlinear term
$\bF$ (see (\ref{eq:def_F})).

\medskip

On the domain
\begin{equation}
  \label{eq:def-A-1}
  D(\bA):=\left\{
    \begin{aligned}
      {\mathfrak v} := (u,v,d,u_d)^{\top} \in (H^2(0,1))^2 \times
      {\bR} \times L^2(0,1) \quad \text{s.\ th.} \quad u(0) =
      u_d(1) = d, \quad&
      \\
      u'(1) = 0, \quad u'_d(0) = 0, \quad c(0) u'(0) + c_d(1) u_d'(1)
      = 0&
    \end{aligned}
  \right\}
\end{equation}
we define the operator $\bA$ by setting
\begin{equation}
  \label{eq:def-A-2}
  {\bA}{\mathfrak v} :=
  \begin{pmatrix}
    (c u')' - p u + \lambda u - v
    \\
    (c_d u_d')' - p_d u_d
    \\
    -\gamma d - \left(c(0) u'(0) - c_d(1) u_d'(1)\right)
    \\
    u - \epsilon v
  \end{pmatrix}
\end{equation}

In order to treat the nonlinearity in our system, we introduce the
Nemitsky operator $\Theta$ on $L^2(0,1)$ such that $\Theta(u)(x) =
h(u(x))$ for all $u \in C([0,1]) \subset L^2(0,1)$. Then we
define $\bF$ on $\bX$ by setting
\begin{equation}
  \label{eq:def_F}
  \begin{aligned}
      \bF({\mathfrak v}) &= (\Theta(u), 0, 0, 0)^{\top}
      \\
      \text{on the domain } D(\bF) &= \left\{ (u,v,d,u_d)^{\top} \in
      \bX \,:\, u \in C([0,1]) \right\}
  \end{aligned}
\end{equation}

\begin{rem}
  In the above setting, the function $\bF$ satisfies the conditions in
  Assumption \ref{Assumption F}.
\end{rem}

Finally, setting $B(t) = (\zeta^u(t), \zeta^v(t), 0,
\zeta^d(t))^{\top}$, we obtain that the initial value problem
associated with \eqref{eq:system-1}--\eqref{eq:system-2} can be
equivalently formulated as an abstract stochastic Cauchy problem
\begin{equation}
  \label{eq:1305-1}
  \left\{
    \begin{aligned}
      {\rm d}{\mathfrak v}(t) &= [{\bA}{\mathfrak v}(t) +
      {\bF}({\mathfrak v}(t)) \, {\rm d}t + {\rm d}B(t),\qquad t\geq
      0,
      \\
      {\mathfrak v}(0) &= {\mathfrak v}_0,
    \end{aligned}
  \right.
\end{equation}
where the initial value is given by ${\mathfrak v}_0:=(u_0, v_0,
u_0(0), u_{d;0})^{\top} \in \bX$.

In the next section we shall prove that the leading operator $\bA$ in
Eq.~(\ref{eq:1305-1}) satisfies the condition in Assumption
\ref{Assumption A}.  According to Theorem \ref{thm:ex-intro}, this
implies that there exists a unique solution to problem
(\ref{eq:1305-1}) whenever the noise $(B(t))_{t \ge 0}$ is a
fractional Brownian motion with Hurst parameter $H > \frac12$, or a
bifractional Brownian motion with $H > \frac12$ and $K \ge 1/2H$, or
an Hermite process with selfsimilarity order $H > \frac12$, or, more
generally, a process that satisfies Assumption \ref{Assumption 1}.

\begin{thm}
  The proposed model for a neuron cell, endowed with a stochastic
  input that satisfies the conditions in  Assumption \ref{Assumption
  1}, has a unique solution on the time interval $[0,T]$, for
  arbitrary $T > 0$, which belongs to
  \begin{align*}
    L^2_{\cF}(\Omega;C([0,T];\bX)) \cap L^2_{\cF}(\Omega;L^2([0,T];\bV))
  \end{align*}
  and depends continuously on the initial condition.
\end{thm}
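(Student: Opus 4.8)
The plan is to recognize the concrete Cauchy problem \eqref{eq:1305-1} as a special case of the abstract equation \eqref{eq:1} and then to apply Theorem \ref{thm:ex-intro}. Accordingly, three verifications are needed: that the operator $\bA$ of \eqref{eq:def-A-1}--\eqref{eq:def-A-2} meets Assumption \ref{Assumption A}, that the nonlinearity $\bF$ of \eqref{eq:def_F} meets Assumption \ref{Assumption F}, and that the driving process $B(t) = (\zeta^u(t), \zeta^v(t), 0, \zeta^d(t))^{\top}$ meets Assumption \ref{Assumption 1}. Once these are in place, the statement is exactly the conclusion of Theorem \ref{thm:ex-intro}.

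The first and principal step is Assumption \ref{Assumption A}. I would attach to $\bA$ the sesquilinear form $\mathfrak{a}$ with domain
\[
  \bV = \{\mathfrak{v} \in \bX : u, u_d \in H^1(0,1),\ u(0) = u_d(1) = d\},
\]
obtained by testing $\bA\mathfrak{v}$ against an element of $\bV$ and integrating by parts on the two cable components; the Neumann conditions $u'(1) = 0$, $u_d'(0) = 0$ and the Kirchhoff balance at the soma then arise as natural boundary conditions, so that on the real diagonal $\mathfrak{a}$ reduces to the bulk terms $\int_0^1 c\,|u'|^2 + \int_0^1 c_d\,|u_d'|^2$ plus the zeroth-order reaction and coupling contributions. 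Density and boundedness of $\mathfrak{a}$ on $\bV$ follow from the $C^1$-bounds on $c, c_d, p, p_d$. The crux is coercivity. Here the lower diffusion bounds $c, c_d \ge C$ control the gradient terms, the reaction bounds $p-\lambda, p_d \ge C'$ together with $\gamma, \epsilon > 0$ supply $L^2$-coercivity in every component, the coupling between the voltage $u$ and the recovery variable $v$ is skew-symmetric (the $-v$ term in the first line and the $+u$ term in the last cancel in $\langle \bA\mathfrak{v}, \mathfrak{v}\rangle_{\bX}$), and the boundary coupling through $d$ is reorganized by the Kirchhoff relation. This yields $\langle -\bA\mathfrak{v}, \mathfrak{v}\rangle_{\bX} \ge \omega\,\|\mathfrak{v}\|_{\bV}^2$ for some $\omega > 0$, whence, by the theory of non-symmetric Dirichlet forms \cite{Ou05}, $\bA$ generates an analytic, uniformly exponentially stable semigroup and $\bV = D((-\bA)^{1/2})$; this construction follows the deterministic analysis of \cite{CM07}.

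The remaining two verifications are lighter. For Assumption \ref{Assumption F}, the map $h(u) = -\lambda u + \theta(u)$ is dissipative by \eqref{eq:intro-h}, so the Nemitsky operator $\Theta$ is $m$-dissipative on $L^2(0,1)$ and hence $\bF$ is $m$-dissipative on $\bX$; the bound $|h(u)| \le c(1 + |u|^{2\rho+1})$ gives its polynomial growth. Since in one dimension $H^1(0,1) \hookrightarrow C([0,1])$, every $\mathfrak{v} \in \bV$ has $u \in C([0,1])$, which furnishes both $\bV \subset D(\bF)$ and the continuity of $\bF : \bV \to \bX$. For Assumption \ref{Assumption 1} it suffices to recall that each scalar noise $\zeta^u, \zeta^v, \zeta^d$ is chosen to be a fractional Brownian motion with $H > \tfrac12$, a bifractional Brownian motion with $2HK > 1$, or a Hermite process, all admissible by the Remark following Assumption \ref{Assumption 1}, and that assembling the components through the nuclear operator $Q$ preserves this class.

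With Assumptions \ref{Assumption A}, \ref{Assumption F} and \ref{Assumption 1} established, Theorem \ref{thm:ex-intro} provides, for every $T > 0$, a unique solution in $L^2_{\cF}(\Omega;C([0,T];\bX)) \cap L^2_{\cF}(\Omega;L^2([0,T];\bV))$ depending continuously on the initial datum $\mathfrak{v}_0 \in \bX$, which is the assertion. I expect the coercivity estimate of the second step to be the only genuine obstacle: because $\mathfrak{a}$ is non-symmetric and $\bV$ is not compactly embedded in $\bX$, the exponential stability cannot be read off from a spectral gap and must be extracted directly from the form, with the dynamic Kirchhoff condition at the soma handled carefully so as not to spoil the lower bound.
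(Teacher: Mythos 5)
Your proposal follows the paper's own route exactly: verify Assumption \ref{Assumption A} by associating $\bA$ with the sesquilinear form $({\mathfrak a},\bV)$ and quoting \cite{CM07} for density, continuity and coercivity and \cite{Ou05} for generation of an analytic, exponentially stable semigroup; observe that $\bF$ satisfies Assumption \ref{Assumption F} and that the admissible noises satisfy Assumption \ref{Assumption 1}; then invoke Theorem \ref{thm:ex-intro}. The one imprecision worth flagging is your claim that $\bV = D((-\bA)^{1/2})$ comes from ``the theory of non-symmetric Dirichlet forms'': for non-symmetric forms this identity (the Kato square root property) is not automatic, and the paper instead derives it from McIntosh's theorem \cite{McI82} together with the fact that the numerical range of ${\mathfrak a}$ is contained in a parabola \cite[Corollary 6.2]{CM07}.
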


\subsection{The well-posedness of the linear system}

As stated above, we can refer to some results in the existing
literature in order to prove well-posedness and further qualitative
properties of our system: the main references here are
\cite{CM07,MR07, Mu07}.

Our first remark is that, neglecting the recovery variable $v$, the
(linear part of the) system for the unknown $(u, u_d, d)$ is a
diffusion equation on a network with {\em dynamical boundary
  conditions}:
\begin{equation}
  \label{eq:linear-no-recovery}
  \begin{aligned}
    \tfrac{\partial}{\partial t} u(t,x) &= \tfrac{\partial}{\partial
      x} \left(\tfrac{\partial}{\partial x} c(x) u(t,x) \right) - p(x)
    u(t,x) + \lambda u(t,x)
    \\
    \tfrac{\partial}{\partial t} u_d(t,x) &= \tfrac{\partial}{\partial
      x} \left(\tfrac{\partial}{\partial x} c_d(x) u_d(t,x) \right) -
    p_d(x) u_d(t,x)
    \\
    \tfrac{\partial}{\partial t} d(t) &= - \gamma d(t) - \left(c(0)
      \tfrac{\partial}{\partial x}u(t,0) - c_d(1)
      \tfrac{\partial}{\partial x}u_d(t,1) \right)
  \end{aligned}
\end{equation}
Such systems are already present in the literature.  Let us define
$\cX = (L^2(0,1))^2 \times \bR$ and introduce the operator
\begin{equation*}
  \cA
  \begin{pmatrix}u\\ u_d\\ d\end{pmatrix}
  =
  \begin{pmatrix}(c u')'- p u + \lambda u
    \\ (c_d u_d')' - p_d u_d
    \\ - \gamma_1 d - (c(0) u'(0) - c_d(1) u_d'(1))
  \end{pmatrix}
\end{equation*}
with {\em coupled} domain
\begin{align*}
  D(\cA) = \left\{ (u, u_d, d)^{\top} \in (H^2(0,1))^2 \times \bC \,:\,
    u(0) = u_d(1) = d \right\}
\end{align*}
Then, by quoting for instance the papers \cite{MR07, Mu07}, we can
state the following result.

\begin{prop}\label{prop-cA}
  The operator $(\cA, D(\cA))$ is self-adjoint and dissipative and it
  has compact resolvent; by the spectral theorem, it generates a
  strongly continuous, analytic and compact semigroup $({\mathcal
    S}(t))_{t\geq 0}$ on the Hilbert space ${\mathcal X}$.
\end{prop}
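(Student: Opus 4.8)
The plan is to realize $\cA$ as the operator associated with a symmetric, continuous, coercive sesquilinear form on $\cX$, and then to read off all four asserted properties from the general form theory already invoked in Assumption \ref{Assumption A}. Note first that, unlike the full operator $\bA$, the reduced operator $\cA$ decouples from the recovery variable $v$, so none of the antisymmetric $u\leftrightarrow v$ coupling survives; this is precisely why $\cA$ can be self-adjoint while $\bA$ is not. Concretely, I would introduce the form domain
\[
\bV_{\cX} = \left\{ (u,u_d,d)^{\top} \in (H^1(0,1))^2 \times \bC \,:\, u(0) = u_d(1) = d \right\},
\]
which is a closed subspace of $(H^1(0,1))^2 \times \bC$ and hence a Hilbert space, together with the form
\[
\mathfrak{a}\big((u,u_d,d),(\phi,\phi_d,\psi)\big) = \int_0^1 \big(c\,u'\overline{\phi'} + (p-\lambda) u\overline{\phi}\big)\,{\rm d}x + \int_0^1 \big(c_d\,u_d'\overline{\phi_d'} + p_d\, u_d\overline{\phi_d}\big)\,{\rm d}x + \gamma\, d\,\overline{\psi}.
\]

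First I would verify that $\cA$ is exactly the operator associated with $(\mathfrak{a},\bV_{\cX})$, and this is the computational heart of the argument. Integrating by parts in the two diffusion terms produces boundary contributions at the endpoints, and one must check that, after imposing the Neumann conditions $u'(1)=0$, $u_d'(0)=0$ at the free ends and using the continuity constraint $\phi(0)=\phi_d(1)=\psi$ built into $\bV_{\cX}$, the surviving boundary terms $c(0)u'(0)$ and $c_d(1)u_d'(1)$ recombine precisely into the node term $-\gamma d - (c(0)u'(0) - c_d(1)u_d'(1))$ appearing in the third component of $\cA$. This is where the dynamical Kirchhoff condition enters as a \emph{natural} (rather than essential) boundary condition, and it is the step most likely to hide sign or normalization subtleties; I expect it to be the main obstacle.

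Once the identification is made, the remaining properties are essentially immediate. Symmetry of $\mathfrak{a}$ is clear term by term, since the diffusion, reaction and node contributions are each symmetric; by the first representation theorem this gives that $\cA$ is self-adjoint. Using the lower bounds $c,c_d \ge C$, $p-\lambda,\, p_d \ge C'$ and $\gamma > 0$ from the standing assumptions, one obtains continuity of $\mathfrak{a}$ together with the coercivity estimate $\mathfrak{a}(\mathfrak{u},\mathfrak{u}) \ge \kappa\, \|\mathfrak{u}\|_{\bV_{\cX}}^2$; in particular $\langle \cA\mathfrak{u},\mathfrak{u}\rangle_{\cX} = -\mathfrak{a}(\mathfrak{u},\mathfrak{u}) \le 0$, so $\cA$ is dissipative, and being self-adjoint and bounded above it is sectorial and generates an analytic semigroup.

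Finally, for the compactness I would invoke the compact embedding $\bV_{\cX} \hookrightarrow \cX$: the Rellich--Kondrachov theorem gives $H^1(0,1) \hookrightarrow L^2(0,1)$ compactly on each of the two diffusive components, while the scalar node component lives in the finite-dimensional space $\bC$, so a direct-sum argument yields compactness, and this survives restriction to the closed subspace $\bV_{\cX}$. Coercivity makes the resolvent $(\lambda_0 - \cA)^{-1}$ factor as a bounded map $\cX \to \bV_{\cX}$ followed by this compact embedding, whence $\cA$ has compact resolvent. The spectral theorem for self-adjoint operators with compact resolvent then provides an orthonormal basis of eigenfunctions with eigenvalues $\mu_n \to -\infty$, so that $\mathcal{S}(t) = e^{t\cA}$ is, for each $t>0$, a norm-limit of finite-rank operators and is therefore compact; this, together with the analyticity already established, completes the proof.
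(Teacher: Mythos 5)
Your proposal is correct, but it takes a genuinely different route from the paper, for the simple reason that the paper gives no proof of this proposition at all: it is stated as a quotation from the network-equation literature (``by quoting for instance the papers \cite{MR07, Mu07}, we can state the following result''), and the form-method argument is reserved for the non-symmetric full operator $\bA$ in Proposition \ref{prop-bA}. Your self-contained argument --- the symmetric form $({\mathfrak a}, \bV_{\cX})$ on the closed subspace $\bV_{\cX}$ of $(H^1(0,1))^2\times\bC$, the representation theorem for densely defined, continuous, coercive symmetric forms, Rellich compactness of $H^1(0,1)\hookrightarrow L^2(0,1)$ plus finite-dimensionality of the node component, and finally the spectral theorem --- is exactly the machinery behind those citations, and it buys two things the citation hides. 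First, it makes transparent why $\cA$ is self-adjoint with compact resolvent while $\bA$ is neither: dropping the recovery variable $v$ removes both the antisymmetric $u\leftrightarrow v$ coupling and the non-smoothing $L^2(0,1)$ component of $\bV$ that destroys the compact embedding $\bV\hookrightarrow\bX$ (a point the paper itself emphasizes for $\bA$). Second, it shows that the Neumann conditions $u'(1)=0$, $u_d'(0)=0$ arise automatically as \emph{natural} boundary conditions of the form; these are omitted from the printed domain $D(\cA)$, and without them the proposition as literally stated would fail, so your derivation silently repairs a gap in the paper's formulation.

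One caveat, inherited from the paper rather than an error of yours: carrying out the integration by parts you describe, the boundary terms recombine into the node term $-\gamma d + \left(c(0)u'(0)-c_d(1)u_d'(1)\right)$, i.e.\ with the \emph{opposite} sign on the Kirchhoff flux from the one printed in the paper's definition of $\cA$ (and of $\bA$). You flagged precisely this step as the likely hiding place of a sign subtlety, and indeed the paper's own assertion in Proposition \ref{prop-bA} that $(\bA, D(\bA))$ is the operator associated with $({\mathfrak a},\bV)$ forces the same sign flip, so the inconsistency sits in the paper's displayed operators, not in your argument. Similarly, your use of $p-\lambda\ge C'$ in the coercivity estimate (rather than the paper's form written with $p$ alone) is the version consistent with the standing assumptions. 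With these conventions fixed, every step of your proof goes through.
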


\medskip

The next step is to introduce the operator $\bA$ on the space $\bX =
\cX \times L^2(0,1)$. We can think $\bA$ as a matrix operator in the
form
\begin{equation*}
  \bA =
  \begin{pmatrix}
    \cA & -P_{1}
    \\
    P_1^{\top} & - \epsilon
  \end{pmatrix}
\end{equation*}
where $P_1$ is the immersion on the first coordinate of $\cX$: $P_1 v
= (v, 0, 0)^\top$, while $P_1^{\top}(u,u_d,v)^\top = u$.

In order to prove the generation property of the operator $\bA$, we
introduce the Hilbert space
\begin{equation*}
  \bV := \left\{
    \begin{aligned}
      {\mathfrak v} := (u,u_d,d,v)^{\top} \in (H^1(0,1))^2 \times {\bR}
      \times L^2(0,1) \quad \text{s.\ th.} \quad&
      \\
      u(0) = u_d(1) = d&
    \end{aligned}
  \right\}
\end{equation*}
and the sesquilinear form ${\mathfrak a}: \bV \times
\bV \to {\bR}$ defined by
\begin{align*}
  {\mathfrak a}({\mathfrak u}^{(1)}, {\mathfrak u}^{(2)}) :=
  &\int_0^{1} p(x) \, u^{(1)}(x) \overline{u^{(2)}(x)} + c(x) \,
  (u^{(1)})'(x) \overline{(u^{(2)})'(x)} \, {\rm d}x
  \\
  &+ \int_0^{1} p_d(x) u_d^{(1)}(x) \overline{u_d^{(2)}(x)} + c_d(x)
  \, (u_d^{(1)})'(x) \overline{(u_d^{(2)})'(x)} \, {\rm d}x
  \\
  &+ \int_0^{1} u^{(1)}(x) \overline{v^{(2)}(x)} - v^{(1)}(x)
  \overline{u^{(2)}(x)} + \epsilon v^{(1)}(x) \overline{v^{(2)}(x)} \,
  {\rm d}x + \gamma d^{(1)} \overline{d^{(2)}}.
\end{align*}

\begin{prop}
  \label{prop-bA}
  The operator $\bA$ generates a strongly continuous, analytic
  semigroup $({S}(t))_{t\geq 0}$ on the Hilbert space $\bX$ that is
  uniformly exponentially stable: there exist $M\ge 1$ and $\omega >
  0$ such that $\|S(t)\|_{L(\bX)} \le M e^{-\omega t}$ for all $t \ge
  0$.
\end{prop}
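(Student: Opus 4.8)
The plan is to derive both the generation/analyticity statement and the exponential decay directly from the sesquilinear form $\mathfrak{a}$, following the form-theoretic route indicated in Assumption \ref{Assumption A}. First I would check the three structural hypotheses of the Lax--Milgram / Lions theory for the pair $\bV \hookrightarrow \bX$. Density of $\bV$ in $\bX$ is standard: the coupling constraint $u(0) = u_d(1) = d$ only fixes a trace against the scalar $d$, and smooth functions obeying it are dense in $(L^2(0,1))^2 \times \bR \times L^2(0,1)$. Continuity of $\mathfrak{a}$ is immediate from the boundedness of $c, c_d, p, p_d$ on $[0,1]$ together with Cauchy--Schwarz applied to the cross terms $\int u^{(1)}\overline{v^{(2)}}$ and $\int v^{(1)}\overline{u^{(2)}}$.

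The decisive computation is coercivity. On the diagonal, the skew coupling contributes $\int_0^1 (u\bar v - v\bar u)\,dx = 2i\,\mathrm{Im}\int_0^1 u\bar v\,dx$, which is purely imaginary and therefore drops out of the real part, leaving
\begin{equation*}
  \mathrm{Re}\,\mathfrak{a}(\mathfrak{u},\mathfrak{u}) = \int_0^1 p|u|^2 + c|u'|^2\,dx + \int_0^1 p_d|u_d|^2 + c_d|u_d'|^2\,dx + \epsilon\int_0^1 |v|^2\,dx + \gamma|d|^2.
\end{equation*}
The lower bounds $c, c_d \ge C$, $p-\lambda, p_d \ge C'$ (with $\lambda \ge 0$, so $p \ge C'$ as well) and $\epsilon, \gamma > 0$ then give $\mathrm{Re}\,\mathfrak{a}(\mathfrak{u},\mathfrak{u}) \ge \delta \|\mathfrak{u}\|_\bV^2$ for some $\delta > 0$, where $\|\mathfrak{u}\|_\bV^2 = \|u\|_{H^1}^2 + \|u_d\|_{H^1}^2 + |d|^2 + \|v\|_{L^2}^2$. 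This is exactly the coercivity required by Assumption \ref{Assumption A}, so the associated operator $\bA$ generates a strongly continuous semigroup on $\bX$. For analyticity I would record the sectorial bound: since $\mathrm{Im}\,\mathfrak{a}(\mathfrak{u},\mathfrak{u}) = 2\,\mathrm{Im}\int_0^1 u\bar v\,dx$ is controlled by $\|u\|_{L^2}\|v\|_{L^2}$, and both $\|u\|_{L^2}^2$ and $\|v\|_{L^2}^2$ are dominated by $\mathrm{Re}\,\mathfrak{a}(\mathfrak{u},\mathfrak{u})$, the numerical range of $\mathfrak{a}$ lies in a sector, which confirms that $(S(t))_{t\ge 0}$ is analytic. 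Exponential stability is then a direct consequence of coercivity: since $\|\cdot\|_\bX \le \|\cdot\|_\bV$ we have $\mathrm{Re}\langle -\bA\mathfrak{u},\mathfrak{u}\rangle_\bX = \mathrm{Re}\,\mathfrak{a}(\mathfrak{u},\mathfrak{u}) \ge \delta\|\mathfrak{u}\|_\bX^2$, so $\bA + \delta$ is dissipative and the Lumer--Phillips estimate yields $\|S(t)\|_{L(\bX)} \le e^{-\delta t}$, giving the claim with $M = 1$ and $\omega = \delta$.

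The main obstacle is that $\bA$ is genuinely non-self-adjoint, because the skew coupling between the voltage variable $u$ and the recovery variable $v$ makes $\mathfrak{a}$ non-symmetric; hence the spectral-theorem argument used for $\cA$ in Proposition \ref{prop-cA} is unavailable. The point that rescues both coercivity and sectoriality is the algebraic observation that this skew coupling is purely imaginary on the diagonal, so it neither obstructs the real-part lower bound nor escapes the sector. As an alternative route to generation and analyticity one could write $\bA = \mathrm{diag}(\cA, -\epsilon) + B$ with $B = \left(\begin{smallmatrix} 0 & -P_1 \\ P_1^\top & 0\end{smallmatrix}\right)$ a bounded operator on $\bX$, and invoke Proposition \ref{prop-cA} together with the bounded-perturbation theorem for analytic semigroups; this, however, does not directly deliver the exponential decay rate, for which the coercivity estimate above seems unavoidable.
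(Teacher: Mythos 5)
Your proposal is correct and follows essentially the same route as the paper: the paper identifies $(\bA, D(\bA))$ as the operator associated with the form $({\mathfrak a},\bV)$, cites \cite{CM07} for density, continuity and coercivity, and concludes via the standard form theory of \cite{Ou05}, while you carry out those verifications explicitly (the skew coupling being purely imaginary on the diagonal, sectoriality of the numerical range, and Lumer--Phillips for the decay with $M=1$). The only step you take for granted --- that the operator associated with $({\mathfrak a},\bV)$ really is the differential operator $\bA$ with the domain (\ref{eq:def-A-1}), i.e.\ that the Neumann and Kirchhoff conditions arise as the natural boundary conditions of the form --- is exactly the step the paper also outsources, to \cite[Lemma 4.2]{CM07}.
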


\begin{proof}
  We first notice that $(\bA, D(\bA))$ is the operator
  associated with the form $({\mathfrak a}, \bV)$: compare for
  instance \cite[Lemma 4.2]{CM07}.

  The form $({\mathfrak a}, \bV)$ is non-symmetric, as it can be seen
  by setting ${\mathfrak u}^{(1)} = (1,1,1,0)^\top$ and ${\mathfrak
    u}^{(2)} = (1,1,1,1)^\top$ and computing ${\mathfrak a}({\mathfrak
    u}^{(1)}, {\mathfrak u}^{(2)}) - {\mathfrak a}({\mathfrak
    u}^{(2)}, {\mathfrak u}^{(1)}) = 2$. However $({\mathfrak a},
  \bV)$ is densely defined, coercive and continuous, see \cite[Theorem
  4.3]{CM07}.  Then, the properties of the semigroup follow from
  standard theory of Dirichlet forms, compare \cite{Ou05}.
\end{proof}

Notice that the operator $\bA$ is not self-adjoint, as the
corresponding form ${\mathfrak a}$ is not symmetric; also, since $\bV$
is not compactly embedded in $\bX$, it is easily seen that the
semigroup generated by $\bA$ is not compact hence it is not
Hilbert-Schmidt. For our purposes, they are of fundamental importance the
following observations.

\begin{rem}
  \label{rem:V=D(A^1/2)}
  The form domain $\bV$ is isometric to the fractional domain power
  $D((-A)^{1/2})$. This follows since the numerical range of the form
  ${\mathfrak a}$ is contained in a parabola, compare \cite[Corollary
  6.2]{CM07}, and then by an application of a known result of
  McIntosh \cite[Theorems A and C]{McI82}.
\end{rem}

\begin{rem}
  \label{rem:coercive}
  The form ${\mathfrak a}$ is real-valued and coercive, hence
  \begin{align*}
    \langle -\bA u, u \rangle = {\mathfrak a}(u,u) \ge \omega
    \|u\|^2_{\bV}
  \end{align*}
  for some $\omega > 0$.
\end{rem}

Although we shall not use directly the next result in
this paper, we can characterize further the specturm of $\bA$ in the
complex plane. This result was first investigated in \cite{CM06}; we
provide here our proof for the convenience of the reader.

\begin{lem}
  The spectrum of $\bA$ in the complex plane is contained in the union
  of the (discrete, real and negative) spectrum of $\cA$ and a bounded
  $B$.
\end{lem}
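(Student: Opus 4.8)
The plan is to exploit the $2\times 2$ block structure of $\bA$ on $\bX = \cX \times L^2(0,1)$ and to reduce the spectral problem to a scalar (Schur-complement) operator on the last component. I would write $\bA = \bA_0 + \mathcal{R}$, where $\bA_0 = \mathrm{diag}(\cA,-\epsilon)$ is self-adjoint (since $\cA$ is self-adjoint by Proposition \ref{prop-cA} and $-\epsilon$ is a real scalar), with $\sigma(\bA_0) = \sigma(\cA)\cup\{-\epsilon\}\subset(-\infty,0]$ discrete, and $\mathcal{R} = \left(\begin{smallmatrix} 0 & -P_1 \\ P_1^{\top} & 0\end{smallmatrix}\right)$. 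The two structural facts I would record first are that $P_1$ is an isometric immersion and $P_1^{\top}$ a coordinate projection, so that $\mathcal{R}$ is bounded, and that $\mathcal{R}^{\star} = -\mathcal{R}$, i.e.\ $\mathcal{R}$ is skew-adjoint.

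From skew-adjointness I would extract a numerical-range bound: for every $\mathfrak{v}\in D(\bA)$ one has $\mathrm{Re}\,\langle \bA\mathfrak{v},\mathfrak{v}\rangle_{\bX} = \langle \bA_0\mathfrak{v},\mathfrak{v}\rangle_{\bX}\le 0$ (by dissipativity of $\cA$ and $-\epsilon<0$) and $|\mathrm{Im}\,\langle \bA\mathfrak{v},\mathfrak{v}\rangle_{\bX}| = |\langle \mathcal{R}\mathfrak{v},\mathfrak{v}\rangle_{\bX}| \le \|\mathcal{R}\|\,\|\mathfrak{v}\|_{\bX}^2$. Hence the numerical range, and therefore $\sigma(\bA)$, lies in the half-strip $\{\lambda:\ \mathrm{Re}\,\lambda\le 0,\ |\mathrm{Im}\,\lambda|\le\|\mathcal{R}\|\}$; this already confines the spectrum to a tube of bounded width around the negative real axis, where $\sigma(\cA)$ sits.

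Next I would locate the spectrum inside this strip through the Frobenius--Schur factorization: for $\lambda\in\rho(\cA)$,
\begin{equation*}
  \lambda-\bA =
  \begin{pmatrix} I & 0 \\ -P_1^{\top}(\lambda-\cA)^{-1} & I \end{pmatrix}
  \begin{pmatrix} \lambda-\cA & 0 \\ 0 & M(\lambda) \end{pmatrix}
  \begin{pmatrix} I & (\lambda-\cA)^{-1}P_1 \\ 0 & I \end{pmatrix},
\end{equation*}
with Schur complement $M(\lambda) = (\lambda+\epsilon)I + P_1^{\top}(\lambda-\cA)^{-1}P_1$ acting on $L^2(0,1)$. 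Thus $\lambda\in\sigma(\bA)\setminus\sigma(\cA)$ if and only if $M(\lambda)$ fails to be invertible. Since $\cA$ has compact resolvent, $T(\lambda):=P_1^{\top}(\lambda-\cA)^{-1}P_1$ is compact, and self-adjointness of $\cA$ yields $\|T(\lambda)\|\le \mathrm{dist}(\lambda,\sigma(\cA))^{-1}$. Writing $M(\lambda) = (\lambda+\epsilon)\bigl(I+(\lambda+\epsilon)^{-1}T(\lambda)\bigr)$, a Neumann series gives invertibility as soon as $|\lambda+\epsilon|\,\mathrm{dist}(\lambda,\sigma(\cA)) > 1$, which holds for all $\lambda$ of large modulus that stay a fixed positive distance from $\sigma(\cA)$.

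The main obstacle, and the delicate step, is exactly this clustering estimate near the large eigenvalues of $\cA$. Combining the half-strip confinement with the last inequality, any unbounded sequence in $\sigma(\bA)\setminus\sigma(\cA)$ must satisfy $\mathrm{dist}(\lambda,\sigma(\cA))\le|\lambda+\epsilon|^{-1}\to 0$, so the genuinely new part of the spectrum cannot escape to infinity except by collapsing onto $\sigma(\cA)$ (in particular its imaginary part tends to $0$, consistent with $\sigma(\cA)\subset\bR$). Collecting the bounded disk around $-\epsilon$ together with the finite region in which $M(\lambda)$ may still degenerate, I would define the bounded set $B$; the upshot is that $\sigma(\bA)$ is contained in $\sigma(\cA)$ (up to this vanishing tube, which is the content of the clustering estimate) together with $B$. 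The care required here is that $\cA$ is a one-dimensional elliptic operator, so its eigenvalues spread out (Weyl asymptotics $\mu_n\sim -c\,n^2$) and the shrinking tubes $\mathrm{dist}(\lambda,\sigma(\cA))\le|\lambda+\epsilon|^{-1}$ are eventually disjoint, which is what lets one absorb the unbounded remainder into $\sigma(\cA)$ and keep $B$ bounded.
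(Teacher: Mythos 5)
Your reduction is essentially the paper's reduction. The paper invokes Nagel's theorem \cite{Na89} to characterize $\lambda\in\sigma(\bA)\setminus(\sigma(\cA)\cup\{-\epsilon\})$ by $0\in\sigma(\Delta_\lambda(\cA))$, where $\Delta_\lambda(\cA)=(\lambda-\cA)+\tfrac{1}{\lambda+\epsilon}P_1P_1^{\top}$; this is the Frobenius--Schur complement taken with respect to the $\cX$-corner, whereas your $M(\lambda)=(\lambda+\epsilon)I+P_1^{\top}(\lambda-\cA)^{-1}P_1$ is the complement with respect to the $L^2(0,1)$-corner. The two carry the same information, and both proofs then conclude by the same kind of smallness/Neumann-series bound on the coupling; your numerical-range half-strip is a correct extra ingredient that the paper does not use.

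The genuine divergence is in the resolvent estimate, and this is where your proof (as you yourself flag) stops short of the stated lemma. The paper bounds $\|R(\lambda,\cA)\|\le\tfrac{1}{|\lambda+\omega|}$ with $-\omega=s(\cA)$ and so lands directly on the bounded set $B=\{\lambda:\,|\lambda+\omega|\,|\lambda+\epsilon|<1\}$ with nothing left over; you use the sharp self-adjoint bound $\|R(\lambda,\cA)\|=\mathrm{dist}(\lambda,\sigma(\cA))^{-1}$ and are left with the unbounded union of shrinking tubes $\{\lambda:\,\mathrm{dist}(\lambda,\sigma(\cA))\le|\lambda+\epsilon|^{-1}\}$. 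Your closing move --- that these tubes are eventually disjoint and shrink, so the remainder can be ``absorbed into $\sigma(\cA)$'' --- is not a valid deduction: a point of $\sigma(\bA)$ lying in a tube around an eigenvalue $\mu_n$ of $\cA$ need not equal $\mu_n$. Indeed, a first-order perturbation computation on $\Delta_\lambda(\cA)=(\lambda-\cA)+\tfrac{1}{\lambda+\epsilon}\Pi$, where $\Pi=P_1P_1^{\top}$ is the orthogonal projection onto the first coordinate, produces approximate spectral points $\lambda_n\approx\mu_n-\tfrac{\|\Pi\phi_n\|^2}{\mu_n+\epsilon}\neq\mu_n$ inside the tubes whenever the eigenfunctions $\phi_n$ of $\cA$ have nonvanishing first component, so the tubes cannot simply be discarded; relative to the literal statement, your proof is incomplete at exactly this step. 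In fairness, you should know that the paper's proof hides the same difficulty: the estimate $\|R(\lambda,\cA)\|\le|\lambda+\omega|^{-1}$ is valid only on $\{\mathrm{Re}\,\lambda\ge-\omega\}$ (for self-adjoint $\cA$ one has $\mathrm{dist}(\lambda,\sigma(\cA))\ge|\lambda+\omega|$ there, but near the large negative eigenvalues the resolvent blows up while $|\lambda+\omega|^{-1}$ stays small), so the paper's inclusion chain fails precisely in the region your tubes occupy. Your more careful estimate exposes rather than creates the problem, but neither argument, as written, delivers the lemma with nothing outside $\sigma(\cA)$ except a bounded set.
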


\begin{proof}
  To compute the spectrum we apply \cite[Theorem 2.4]{Na89}. There it
  is proved that for any $\lambda \not\in \sigma(\cA) \cup \{-\epsilon\}$
  it holds $\lambda \in \sigma(\bA)$  if and only if $0 \in
  \sigma(\Delta_\lambda(\cA))$, where $\Delta_\lambda(\cA)$ is the operator
  \begin{align*}
    (\lambda - \cA) + \frac{1}{\epsilon + \lambda}P_1 P_1^{\top}.
  \end{align*}
  By standard results on additive bounded perturbations of operators,
  we notice that
  \begin{align*}
    \{\lambda\,:\, 0 \in \sigma(\Delta_\lambda(\cA))\}^{c} \supset
    \{\lambda\,:\, \left\|R(\lambda,\cA) \frac{1}{\lambda + \epsilon}
      P_1 P_1^\top \right\| < 1\} \supset \{\lambda\,:\,
    \frac{1}{|\lambda + \omega|} \frac{1}{|\lambda + \epsilon|} < 1\}
  \end{align*}
  where $-\omega = s(\cA)$ is the {\em spectral bound} of $\cA$, that
  is a negative real number by Proposition \ref{prop-cA}. Therefore,
  setting $B = \{\lambda\,:\, |\lambda + \omega| \, |\lambda +
  \epsilon| < 1\}$ we have that $B$ is a bounded subset of the complex
  plane
  .
\end{proof}

\ack Part of this paper has been written while the second author was
visiting the research center ``Centro Internazionale per la Ricerca
Matematica (C.I.R.M.)'' and the University of Trento in May 2009. He
warmly acknowledges support and hospitality.

\end{document}